\newtheorem{thm}{Theorem}[section]
\newtheorem{lema}[thm]{Lemma}
\newtheorem{cor}[thm]{Corollary}
\newtheorem{prop}[thm]{Proposition}
\theoremstyle{definition}
\newtheorem*{rmk}{Remark}
\newcommand{\D}{\mathbb{D}}
\newcommand{\R}{\mathbb{R}}
\newcommand{\C}{\mathbb{C}}
\newcommand{\N}{\mathbb{N}}
\newcommand{\EC}{\widehat{\mathbb{C}}}
\newcommand{\MO}{\mathcal{O}}
\newcommand{\MC}{\mathcal{C}}
\newcommand{\MP}{\mathcal{P}}
\newcommand{\MJ}{\mathcal{J}}
\newcommand{\MF}{\mathcal{F}}
\newcommand{\diam}{\textup{diam}}
\newcommand{\dist}{\textup{dist}}
\newcommand{\Shape}{\textup{Shape}}
\newcommand{\Int}{\textup{int}}
\newcommand{\wt}{\widetilde}
\newcommand{\mb}{\mathbb}
\newcommand{\mc}{\mathcal}
\newcommand{\sm}{\setminus}
\newcommand{\tu}{\textup}
\newcommand{\ol}{\overline}
\newcommand{\es}{\emptyset}
\newcommand{\tb}{\textbf}
\makeatletter\@addtoreset{equation}{section}\makeatother 
\begin{document}

\author[W. Qiu]{Weiyuan Qiu}
\address{School of Mathematical Sciences, Fudan University, Shanghai 200433, P. R. China}
\email{wyqiu@fudan.edu.cn}

\author[F. YANG]{FEI YANG}
\address{Department of Mathematics, Nanjing University, Nanjing 210093, P. R. China}
\email{yangfei@nju.edu.cn}

\author[J. Zeng]{Jinsong Zeng$^\dag$}
\address{School of Mathematics and Information Science, Guangzhou University, Guangzhou 510006, P. R. China}
\email{zeng.jinsong@foxmail.com}

\title[Quasisymmetric geometry of carpet Julia sets]{Quasisymmetric geometry of Sierpi\'{n}ski carpet Julia sets}

\begin{abstract}
In this paper, the main focus is on the Sierpi\'{n}ski carpet Julia sets of the rational maps with non-recurrent critical points. We study the uniform quasicircle property of the peripheral circles, the relatively separated property of the peripheral circles and the locally porous property of these carpets. We also establish some quasisymmetric rigidities of these carpets, which generalizes the main results of Bonk-Lyubich-Merenkov \cite{BLM16} to the postcritically infinite case. In the end we give a strategy to construct a class of postcritically infinite rational maps whose Julia sets are quasisymetrically equivalent to some round carpets.
\end{abstract}

\subjclass[2010]{Primary: 37F45; Secondary: 37F10}

\keywords{Sierpi\'{n}ski carpet; Julia sets; quasisymmetric rigidity; semi-hyperbolic}

\date{\today}

\thanks{$^\dag$ Corresponding author}


\maketitle

\tableofcontents

\section{Introduction}\label{introduction}

\subsection{Quasisymmetric equivalence}

Let $(X,d_X)$ and $(Y,d_Y)$ be two metric spaces. If there exists a homeomorphism $f:X\rightarrow Y$ and a distortion control function $\eta:[0,\infty)\rightarrow [0,\infty)$ which is also a homeomorphism such that
\begin{equation*}
\frac{d_Y(f(x),f(y))}{d_Y(f(x),f(z))}\leq \eta\left(\frac{d_X(x,y)}{d_X(x,z)}\right)
\end{equation*}
for every triple of distinct points $x,y,z\in X$, then $f$ is called a \emph{quasisymmetric map} and $(X,d_X)$, $(Y,d_Y)$ are \emph{quasisymetrically equivalent}.  A basic question in quasiconformal geometry is to determine whether two given homeomorphic metric spaces are quasisymmetrically equivalent.

It is known that the question arises also in the classification of hyperbolic spaces and word hyperbolic groups in the sense of Gromov \cite{BP02,Kle06}. See also \cite{Bou97} for the examples of inequivalent spaces modeled on the universal Menger curve.

According to \cite{Why58}, a set $S\subseteq\EC$ is called a \emph{Sierpi\'{n}ski carpet} (\emph{carpet} in short) if $S$ has empty interior and can be expressed as $S=\EC\setminus\bigcup_{i\in\mathbb{N}}D_i$, where $\{D_i\}_{i\in\mathbb{N}}$ are Jordan disks with their closures pairwise disjoint and with their spherical diameters $\textup{diam}_{\sigma}(D_i)\to 0$ as $i\to\infty$. The collection of the boundaries of the Jordan disks $\{\partial D_i\}_{i\in\N}$ are called the \emph{peripheral circles} of $S$. If each peripheral circle $\partial D_i$ is geometrically round, then $S$ is called a \emph{round carpet}. All Sierpi\'{n}ski carpets are topologically the same, so the question on classification of carpets up to quasisymmetric equivalence arises naturally.

Actually, the study of the quasisymmetric equivalences between the carpets and round carpets was partially motivated by the Kapovich-Kleiner conjecture in geometry group theory. This conjecture is equivalent to the following statement: if the boundary of infinity $\partial_\infty G$ of a Gromov hyperbolic group $G$ is a Sierpi\'{n}ski carpet, then $\partial_\infty G$ is quasisymmetrically equivalent to a round carpet in $\EC$.

\vskip0.2cm
The example of rational map whose Julia set is a Sierpi\'{n}ski carpet, called \emph{carpet Julia set} for short, was firstly found by Milnor and Tan \cite[Appendix F]{Mil93}. Later, it was proved that many rational maps, such as the McMullen maps \cite{DLU05}, the generalized McMullen maps \cite{XQY14}, the quadratic rational maps \cite{DFGJ14} and some higher degree rational maps \cite{Yan18} etc, have carpet Julia sets. Conjecturally, the hyperbolic components of such rational maps are relatively compact in the space of rational maps up to M\"{o}bius conjugation \cite[Question 5.3]{McM95}.

\vskip0.2cm
Let $f$ be a rational map whose Julia set $\MJ_f$ is a carpet. Two questions arise naturally:

(Q1) Whether there exist carpet Julia sets such that they are not quasisymmetrically equivalent to $\mc{J}_f$ ?

(Q2) When is $\MJ_f$ quasisymmetrically equivalent to a round carpet?

\vskip0.2cm
Let $X$ be a metric space. The \textit{conformal dimension} of $X$ is the infimum of the Hausdorff dimensions of all metric spaces which are quasisymmetrically equivalent to space $X$. The conformal dimension is invariant under quasisymmetric maps. For the first question, Ha\"{i}ssinsky and Pilgrim constructed a sequence of hyperbolic rational maps with carpet Julia sets, whose degrees tend to infinity,  and showed that their conformal dimensions tend to two \cite[Theorem 3]{HP12}. This means that there are infinitely many quasisymmetrically inequivalent carpet Julia sets. However, it is of interest to ask whether there exists such sequence of rational maps with uniformly bounded degrees.

Let $\textup{dist}_\sigma$ and $\textup{diam}_\sigma$ denote the spherical distance and diameter respectively. The \emph{relative distance} $\Delta(A,B)$ of two sets $A$ and $B$ in $\EC$ is defined as
\begin{equation}\label{defi-relative-distance}
\Delta(A,B):=\frac{\textup{dist}_\sigma(A, B)}{\textup{min}\{\textup{diam}_\sigma(A), \textup{diam}_\sigma(B)\}}.
\end{equation}

A set of Jordan curves $\mathcal{C}=\{\gamma_i\}_{i\in\N}$ is called \emph{uniformly relatively separated} if their pairwise relative distances are uniformly bounded away from zero. Specifically, there exists $\delta>0$ such that $\Delta(\gamma_i, \gamma_j)\geq\delta$ for every two different $i$ and $j$. The set $\mathcal{C}$ is \emph{uniform quasicircles} if there exists $K\geq 1$ such that each $\gamma_i$ in $\mathcal{C}$ is a $K$-quasicircle.

For the question (Q2), Bonk gave a sufficient condition on the carpets in $\EC$ such that they can be quasisymmetrically equivalent to some round carpets. He proved that a carpet $S$ in $\EC$ is quasisymmetrically equivalent to a round carpet if its peripheral circles is uniform quasicircles and is uniformly relatively separated \cite[Corollary 1.2]{Bon11}. It is worth to mention that quasisymmetric maps preserve the uniform quasicircles and uniformly relatively separated properties. However, it is not hard to see that the peripheral circles of a carpet that is quasisymmetrically equivalent to a round carpet must be uniform quasicircles but is not necessarily uniformly relatively separated.

\vskip0.2cm
Bonk, Lyubich and Merenkov studied the postcritically-finite rational maps whose Julia sets are Sierpi\'{n}ski carpets. They proved that such carpet Julia sets are quasisymmetrically equivalent to some round carpets \cite[Theorem 1.10]{BLM16}. Moreover, they show that any quasisymmetric map between two critical finite carpet Julia sets is the restriction of a M\"{o}bius transformation \cite[Theorem 1.4]{BLM16}. As a corollary, the quasisymmetric group, consisting of all quasisymmetric self-maps, is finite \cite[Corollary 1.2]{BLM16}.

In this article, we study the carpet Julia sets in the postcritically-infinite case and extend the corresponding results to a more general case.

\subsection{Statement of the main results}

The \emph{$\omega$-limit set} $\omega(x)$ of a point $x\in\EC$ under a rational map $f$ is defined as the set of accumulation points in the orbit of $x$. More precisely, $\omega(x):=\{y\in\EC:$ there exists a sequence $\{k_n\}_{n\in\N}$ such that $\lim_{n\to\infty}f^{\circ k_n}(x)=y\}$. Obviously, $\omega(x)$ is forward invariant under $f$. For a given rational map $f$, we use $\mathcal{C}_f$ to denote the family of the boundaries of the Fatou components of $f$.
We establish a sufficient condition on the carpet Julia sets such that they are quasisymmetrically equivalent to some round carpets.

\begin{thm}\label{thm-main-1}
	Let $f$ be a rational map whose Julia set $\mathcal{J}_f$ is a carpet. If the elements in $\mathcal{C}_f$ are disjoint from the $\omega$-limit sets of the critical points, then $\mathcal{C}_f$ are uniform quasicircles and uniformly relatively separated. In particular, $\mathcal{J}_f$ is quasisymmetrically equivalent to a round carpet.
\end{thm}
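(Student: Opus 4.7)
The strategy is to verify the two geometric hypotheses of Bonk's criterion from the introduction, which then yields the ``in particular.'' For the setup: since a Sierpi\'nski carpet is connected, every Fatou component is simply connected, and Sullivan's theorem makes each preperiodic; let $U_1,\dots,U_N$ be representatives of the finitely many cycles of periodic Fatou components. The hypothesis forces each $U_i$ to be an attracting or super-attracting basin---for Siegel disks and Hermann rings, $\partial U_i$ classically meets $\omega(\Crit(f))$; for parabolic basins the parabolic cycle on $\partial U_i$ is the attractor of critical orbits in the basin---and forbids any critical point from lying on a periodic peripheral circle: if $c\in\partial U_i$ were critical, the sequence $\{f^{kp_i}(c)\}_{k\ge 0}\subset\partial U_i$ would give a nonempty $\omega(c)\cap\partial U_i$. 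With $U_i$ (super)attracting and $\Crit(f)\cap\partial U_i=\es$, classical arguments (B\"ottcher or Koenigs linearization in $U_i$ combined with Ma\~n\'e-type expansion of $f^{p_i}$ along $\partial U_i$) show each $\partial U_i$ is a quasicircle.

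To promote this to a uniform quasicircle bound on every peripheral circle, the plan is to invoke the shrinking lemma from complex dynamics. Choose a simply connected neighborhood $\Omega_i$ of each $\partial U_i$ with $\ol{\Omega_i}\cap\omega(\Crit(f))=\es$; by compactness and the definition of the $\omega$-limit set there exists $N_0$ such that $f^n(c)\notin\bigcup_i\Omega_i$ for every $c\in\Crit(f)$ and every $n\ge N_0$. Consequently, every univalent branch of $f^{-n}$ over $\Omega_i$ has Koebe distortion bounded independently of $n$, while the finitely many shallow pullbacks (for $n\le N_0$) or branch points arising from the $N_0$ initial critical-orbit points contribute only multiplicative constants. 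Any preperiodic peripheral circle $\partial W$ with $f^n(W)=U_i$ is the image of $\partial U_i$ under such a pullback, and hence is a quasicircle with a uniform constant.

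For uniform relative separation, given distinct peripheral circles $C_1,C_2$, iterate $f$ and distinguish cases. If $C_1,C_2$ eventually land on different periodic peripheral circles, there exists $n$ with $D_i:=f^n(C_i)$ distinct and periodic; the set of such ordered pairs is finite, so $\Delta(D_1,D_2)\ge\delta_0>0$, and bounded distortion propagates this to $\Delta(C_1,C_2)\ge\delta$. The main obstacle I anticipate is the ``merging'' case, in which there is a largest $m$ with $f^m(C_1)\neq f^m(C_2)$ but $f(f^m(C_1))=f(f^m(C_2))=:E$, so $D_i:=f^m(C_i)$ are two distinct components of $f^{-1}(E)$. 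To bound $\Delta(D_1,D_2)$ uniformly from below, one iterates further: $E$ is itself eventually periodic, hence by the previous paragraph a bounded-distortion image of some $\partial U_j$; two distinct components of $f^{-1}(E)$ are then controlled by the finitely many local branch configurations of $f$ at its critical points combined with this uniform pullback estimate, yielding a uniform lower bound on $\Delta(D_1,D_2)$ that pulls back to $\Delta(C_1,C_2)\ge\delta$.
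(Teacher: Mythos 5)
Your overall plan --- verify Bonk's two geometric hypotheses and then cite his uniformization theorem --- matches the paper. The uniform-quasicircle half is close in spirit to the paper's: Lemma \ref{periodic uniform quasicircles} establishes that the \emph{periodic} peripheral circles are quasicircles using Ma\~n\'e's theorem (not B\"ottcher/Koenigs linearization, which only lives in the interior of $U_i$ and does nothing on $\partial U_i$), and Proposition \ref{uniform_quasicircle-1} promotes this to a uniform bound by pulling back, for each $U\notin\mathcal U$, a fixed annulus $V_i\setminus\overline{U}_i$ around $U_i:=f^{\circ k}(U)$ that is disjoint from $\omega(\Crit f)$, so that $f^{\circ k}:V\to V_i$ is univalent and Lemma \ref{pullback of quasicircle} gives the constant. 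Your Koebe-plus-shrinking-lemma sketch is the same idea, but the phrase that branch points ``contribute only multiplicative constants'' is not a proof --- Koebe does not apply across a critical point, and for bounded-degree non-univalent pullbacks you need a turning/shape distortion estimate like Lemma \ref{control turning}.

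The real gap is in the relative-separation half, where you take a genuinely different route and it does not close. The paper's Proposition \ref{separated} bounds from below the modulus of the annulus $A$ between any two peripheral circles: it builds annuli $H_i=Y_i\setminus\overline{X}_i$ around each periodic $X_i$ that contain no critical orbit points, notes that the relevant pullback components are unbranched covers of degree at most some $N$ depending only on $f$ (each Fatou component passes any critical point at most once on its way to a periodic one), so each pullback annulus has modulus $\geq m/N$ by Lemma \ref{lem:KL}, and then splits into two cases to get $\textup{mod}(A)\geq m/N$; Lemma \ref{modulus and separated} converts this into a uniform lower bound on $\Delta$. Your proposal instead tries to push $\Delta$ forward by ``bounded distortion,'' but this is not a bounded-distortion situation: $f^{\circ n}$ restricted to the region between $C_1$ and $C_2$ has no a priori degree bound, and in the ``merging case'' the appeal to ``local branch configurations at the critical points'' gives no scale-invariant estimate --- the merged pair $D_1,D_2$ can be arbitrarily small and sit at arbitrary depth in the pullback tree, so nothing forces $\Delta(D_1,D_2)$ away from $0$ by a local argument. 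The modulus-of-separating-annulus argument is precisely what rescues this, since modulus degrades only by the covering degree (Lemma \ref{lem:KL}) whereas Euclidean separation near a critical point does not; your proposal lacks this ingredient and the merging case is left unresolved.
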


A critical point $c$ of $f$ is called \emph{recurrent} if $c\in\omega(c)$. A rational map $f$ is called \emph{semi-hyperbolic} provided that the Julia set $\mathcal{J}_f$ contains neither parabolic periodic points nor recurrent critical points (see \cite{Man93} and \cite{Yin99}). It was known that the Julia set of a semi-hyperbolic rational map is locally connected and has measure zero or equal to $\EC$.

\begin{thm}\label{thm-main-2}
	Let $f$ be a semi-hyperbolic rational map whose Julia set $\mathcal{J}_f$ is a carpet. Then $\mathcal{C}_f$ are uniform quasicircles. Moreover, they are uniformly relatively separated if and only if the $\omega$-limit sets of the critical points are disjoint from the elements of $\mathcal{C}_f$.
\end{thm}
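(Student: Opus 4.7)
The statement has two parts: (A) the peripheral circles are uniform quasicircles, and (B) the equivalence between uniform relative separation and the disjointness condition on $\omega$-limit sets. The ``if'' direction of (B) is already covered by Theorem \ref{thm-main-1}, so only (A) and the ``only if'' half of (B) require new argument. The essential analytic tool at our disposal is Mañé's theorem for semi-hyperbolic rational maps: for every $z\in J_f$ and every $N\geq 1$ there is a neighborhood $W$ of $z$ such that each component $\wt W$ of $f^{-n}(W)$ satisfies $\deg(f^n|_{\wt W})\leq N$ and $\diam(\wt W)\to 0$. A second standard fact we shall use is that every periodic Fatou component of a semi-hyperbolic rational map is a John domain.

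For (A), I would first show that every Fatou component of $f$ is a John domain with a uniform John constant. The finitely many periodic components are John by the standard result just mentioned. For a preperiodic Fatou component $V$ with $f^n(V)=U$ a periodic one, cover $\ol U$ by finitely many Mañé neighborhoods and pull them back by $f^n$; on each pullback component $f^n$ has degree at most a universal $N$ independent of $n$. Combining the John condition on $U$ with the Koebe distortion theorem applied off the critical set and the bounded local degree, the John condition transfers to $V$ with a uniform John constant. Since $\partial V$ is a Jordan curve by the carpet hypothesis, the classical fact that a simply connected John domain bounded by a Jordan curve is a quasidisk yields a uniform quasicircle constant for $\partial V$, which proves (A).

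For the ``only if'' direction of (B), suppose $\omega(c)\cap\partial U_0\neq\es$ for some critical point $c$ and some periodic Fatou component $U_0$; pick $z$ in this intersection and a subsequence $f^{n_k}(c)\to z$. Since $z\in\partial U_0$ lies on a Jordan curve in the carpet $J_f$, near $z$ one can find pairs of peripheral circles of definite relative distance, for instance $\partial U_0$ together with the boundary of some nearby shallow-level Fatou component $A$. Pull these pairs back through the branches of $f^{-n_k}$ whose inverse images pass near $c$. Because $c$ is a critical point, the map $f^{n_k}$ has local degree $\geq d_c$ at $c$, and each additional near-miss with a further critical point in the orbit $\{c,f(c),\ldots\}$ amplifies this branching. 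Combined with the fact that $f^{n_k}(c)$ itself approaches $\partial U_0$, the pulled-back Fatou components are squeezed together so that $\dist/\diam$ tends to zero, violating the uniform relative separation of $\mc{C}$.

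The principal obstacle is the quantitative execution of the last paragraph. Mañé's theorem is not uniform in the base point, so the Mañé neighborhood at $f^{n_k}(c)$ may shrink as $k\to\infty$, and the pullback degree grows whenever the orbit passes close to further critical points. The crucial technical ingredient is therefore a careful distortion estimate for $f^{n_k}$ on a suitably chosen pullback neighborhood, combining the Koebe distortion theorem on the conformal portions of the orbit with the explicit $w\mapsto w^d$ local model at each critical point encountered, and then selecting a specific pair of Fatou components in the pullback whose relative distance can be shown to degenerate. A puzzle-type combinatorial bookkeeping, exploiting non-recurrence of critical points in $J_f$ to eventually separate orbits, looks like the natural framework for organizing these estimates.
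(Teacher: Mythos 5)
Your high-level decomposition is right: part~(A) plus the ``only if'' half of~(B) are what need a new argument, and the ``if'' half of~(B) is Proposition~\ref{separated}. The paper indeed assembles Theorem~\ref{thm-main-2} from three propositions (\ref{uniform_quasicircle}, \ref{separated}, \ref{condi-necessary}). But both of your new ingredients have real gaps.

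For~(A), the step ``a simply connected John domain bounded by a Jordan curve is a quasidisk'' is false. A Jordan domain $\Omega$ is a quasidisk if and only if \emph{both} $\Omega$ and $\EC\setminus\overline{\Omega}$ are John domains (equivalently, $\Omega$ is a uniform domain); Johnness of the inside alone does not rule out outward-pointing cusps. For instance, join the two tangent arcs $\{x\pm ix^{2}: 0\le x\le 1\}$ by a large circular arc: the interior Jordan domain is John, but its boundary has a cusp at the origin and is not a quasicircle. So proving that each Fatou component is a John domain with uniform constant would not yield uniform quasicircles. The paper's Proposition~\ref{uniform_quasicircle} avoids John domains entirely and instead controls the \emph{turning} directly: it first proves that each periodic $\partial U$ is a quasicircle (Lemma~\ref{periodic uniform quasicircles}) by iterating a short boundary arc forward until its diameter crosses a threshold and then pulling back the turning bound through a Mañé disk via Lemma~\ref{control turning}(2); then it transfers this to all Fatou components by splitting $\mc{F}$ into a finite exceptional collection $\mc{F}_1$ and the rest, each of which lands conformally inside a Mañé disk after finitely many iterations. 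If you want to keep the John-domain language, you would at minimum need to show the \emph{complement} of each Fatou component is also uniformly John near its boundary curve, which is not a standard fact and requires an argument comparable to what the paper does directly.

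For~(B), your strategy is the right one, but the last paragraph, where you acknowledge that ``Mañé's theorem is not uniform in the base point'' and resort to ``puzzle-type bookkeeping,'' is where a concrete argument is still missing, and this is precisely where the paper's Proposition~\ref{condi-necessary} does something you did not find. The Mañé neighborhood is placed at the \emph{limit point} $x\in\partial U\cap\omega(c)$, not at $f^{\circ n_k}(c)$, so it is a single fixed disk $\D(x,\delta_x)$ and there is no shrinking to worry about. Inside it one takes the nested round disks $\D(c_{k_n},\epsilon_{k_n})\subset\D(c_{k_n},\delta/2)\subset\D(c_{k_n},\delta)$ with $\epsilon_{k_n}=|x-c_{k_n}|\to 0$, and pulls them back by $f^{\circ k_n}$ to nested simply connected domains $X_{k_n}\subset Y_{k_n}\subset Z_{k_n}$ around $c$. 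Because $c_1=f(c)$ is a critical value, a single Fatou component $U_{k_n-1}$ touching $\overline{X}_{k_n-1}$ pulls back to at least two distinct Fatou components $U_{k_n}$, $U'_{k_n}$ touching $\overline{X}_{k_n}$; these, not an arbitrary nearby pair, are the components whose relative distance is shown to degenerate. The quantitative squeezing is then obtained by two clean estimates: Lemma~\ref{control turning}(1) gives bounded shape of $X_{k_n}$ and $Y_{k_n}$ about $c$, while Lemma~\ref{lem:KL} forces $\textup{mod}(Y_{k_n}\setminus\overline{X}_{k_n})\ge\frac{1}{2\pi d}\log\frac{\delta}{2\epsilon_{k_n}}\to\infty$, so $\diam(X_{k_n})/\dist(c,\partial Y_{k_n})\to 0$, and therefore $\Delta(\partial U_{k_n},\partial U'_{k_n})\to 0$. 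Replacing your appeal to a Koebe/local-model computation along an orbit by this fixed-base-point Mañé-plus-modulus argument is the missing step.
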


If a rational map is not semi-hyperbolic, then the boundary of some Fatou component may not be a quasicircle although it is a Jordan curve. For example, one can construct a rational map $f$ whose Julia set is a Sierpi\'{n}ski carpet but the Julia set $\MJ_f$ contains a parabolic periodic point. The corresponding parabolic Fatou component contains exactly one petal and has infinitely many cusps on its boundary. Thus the boundary of this Fatou component cannot be a quasicircle. In this case, $\mathcal{J}_f$ cannot be quasisymmetrically equivalent to any round carpet.

As an application of Theorem \ref{thm-main-2}, we prove

\begin{thm}\label{thm-crit-infi}
There exists a rational map which is critically-infinite in the Julia set such that the corresponding Julia set is quasisymmetrically equivalent to a round carpet.
\end{thm}

One can also refer to \cite{QYY16} for other non-hyperbolic examples whose Julia sets are quasisymmetrically equivalent to round carpets.

\vskip0.2cm
For a given rational map $f$, we use $\MF_f$ and $\MP_f$ to denote its Fatou set and the postcritical set respectively.
As a generalization of the main result in \cite{BLM16}, we prove the following

\begin{thm}\label{mobius}
	Let $f,g$ be two rational maps whose Julia sets are carpets. Suppose that $f$ is semi-hyperbolic, and that the $\omega$-limit sets of the critical points of $f$ are disjoint from the elements of $\mathcal{C}_f$, and that $\sharp(\MF_f\cap\MP_f)<\infty$, $\sharp(\MF_g\cap\MP_g)<\infty$. Then any quasisymmetric homeomorphism between $\mathcal{J}_f$ and $\mathcal{J}_g$ is the restriction of a M\"{o}bius transformation.
\end{thm}

It is worth to mention that $g$ is not assumed to be semi-hyperbolic in advance. The proof of Theorem \ref{qs_rigidity} indicates that $g$ has no Siegel disks. We conjecture that $g$ turns out to be semi-hyperbolic as well.

If $f$ is a hyperbolic rational map that is not conjugate to $z^d$ with $|d|\geq 2$, and whose Julia set is not a circle, an arc of a circle nor the whole Riemann sphere, then the group of M\"{o}bius transformation that keeps  $\MJ_f$ invariant is finite (see \cite{Lev90,Lev01} and also \cite{LP97}). It was proved in \cite{BLM16} that the quasisymmetric group of the carpet Julia set of a postcritically-finite rational map is finite. We now extend this result to a more general case.

\begin{thm}\label{group_finite}
	Let $f$ be a semi-hyperbolic rational map whose Julia set $\mathcal{J}_f$ is a carpet. If the $\omega$-limit sets of the critical points of $f$ are disjoint from the elements of $\mathcal{C}_f$, then the quasisymmetric group $QS(\mathcal{J}_f)$ is finite.
\end{thm}

\subsection{Outline of the proofs and the organization of this article}

In the first half part of this article, we are mainly interested in the condition when a carpet Julia set is quasisymmetrically equivalent to a round carpet. By Bonk's criterion, this motivates us to find the condition when the peripheral circles of a carpet Julia set are uniform quasicircles and when they are uniformly relatively separated.

In order to prove that the peripheral circles of some carpet Julia sets are uniform quasicircles, we first discuss the periodic Fatou components and prove that they are quasidisks if their boundaries avoid the parabolic periodic points and the points in the $\omega$-limit sets of the recurrent critical points (Lemma \ref{periodic uniform quasicircles}). Therefore, all peripheral circles are quasicircles by using Sullivan's eventually periodic theorem.

In order to prove the uniformity, we discuss two cases. In the first case, suppose that all the periodic Fatou components are disjoint from the $\omega$-limit sets of the critical points. Then for each periodic Fatou component $U$, one can find a large Jordan disk $V$ such that $V\setminus\overline{U}$ is an annulus and all components of the preimages of $V\setminus\overline{U}$ are annuli whose moduli have uniform lower bound. By using a distortion argument, one can prove that all peripheral circles are uniform quasicircles (Proposition \ref{uniform_quasicircle-1}). In the second case, suppose that the rational map is semi-hyperbolic. Then the corresponding Julia set (and hence all the periodic Fatou components) contains neither parabolic periodic points nor recurrent critical points. One can also prove that all peripheral circles are uniform quasicircles by using Ma\~{n}\'{e}'s theorem and its variation (Theorem \ref{mane lemma}, Lemma \ref{remk} and Proposition \ref{uniform_quasicircle}).

In order to prove that the peripheral circles of some carpet Julia sets are uniformly relatively separated, we first establish a lemma which asserts that the modulus can control the relative distance (Lemma \ref{modulus and separated}). Then we prove the peripheral circles are uniformly relatively separated by showing that all moduli of the annuli between two different peripheral circles have a lower positive bound (Proposition \ref{separated}).

\vskip0.2cm
In the second half part of this article, we study the quasisymmetric rigidity of the carpet Julia sets of semi-hyperbolic rational maps. For this, we need to prepare two important ingredients: the first one is to establish the locally porous property of the Julia sets of semi-hyperbolic rational maps (Theorem \ref{local_porous}), and the second one is the dynamics on the Fatou components of the rational maps whose Julia set is locally connected (\S\ref{sec-dyn-Fatou}). With these two ingredients in hand, we can prove two stronger results (Theorems \ref{qs_rigidity} and \ref{finite_group}) such that Theorems \ref{mobius} and \ref{group_finite} are their corollaries by combining some conclusions obtained in \cite{Mer12, Mer14} and making a detail analysis on the classification of the M\"{o}bius transformations.

\vskip0.2cm
This article is organized as follows:

In $\S$\ref{sec-distortion-esti}, we prepare some distortion lemmas for the proofs of Theorems $\ref{thm-main-1}$ and \ref{thm-main-2}. In particular, we prove that the modulus can control the relative distance.

In $\S$\ref{sec-pf-of-main}, we first prove some propositions about the properties of uniform quasicircles and uniformly relatively separated. Then we prove Theorem \ref{thm-main-1} by using Bonk's criterion. We also discuss the non-uniformly relatively separated property and prove Theorem \ref{thm-main-2} by combining Bonk's criterion and Ma\~{n}\'{e}-Yin's characterization on semi-hyperbolic rational maps. We will finish this section by proving that a semi-hyperbolic rational map with carpet Julia set is locally porous.

In \S\ref{sec-dyn-Fatou}, we give a characterization of the dynamics on the Fatou components of a rational map whose Julia set is locally connected and the intersection of the postcritical set with the Fatou set is finite.

In \S\ref{quas_rigidity}, in order to obtain Theorems \ref{mobius} and \ref{group_finite}, we will prove two stronger results (Theorems \ref{qs_rigidity} and \ref{finite_group}).

In \S\ref{example}, using the combinatorial method and renormalization theory, we construct a critically-infinite semi-hyperbolic rational map whose Julia set is quasisymmetrically equivalent to a round carpet and hence prove Theorem \ref{thm-crit-infi} (see Theorem \ref{existence-mcm}).

\subsection{Notations}\label{notations}

We list here some notations that will be used throughout in this article.

$\bullet$ ${\mathbb{C}}$, $\EC$ and $\mathbb{D}$ are the complex plane, the Riemann sphere and the unit disk respectively.

$\bullet$ $\overline{A}$, $\partial A$ and $\Int(A)$ are the closure, the boundary and the interior of a set $A$ respectively.

$\bullet$ Two sets satisfying $A\Subset B$ means that $\overline{A}\subseteq \text{int}(B)$.

$\bullet$ Let $z\in\C$, $E\subseteq\C$ and $r>0$. We denote $B(z,r):=\{x\in\C; |x-z|<r\}$ and $\text{diam}(E):=\text{sup}\{\text{dist}(x,y); x,y\in E\}$, where $\text{dist}(x,y):=|x-y|$ is the Euclidean metric.

$\bullet$ Let $z\in\EC$, $E\subseteq\EC$ and $r>0$. We denote $B_{\sigma}(z,r):=\{x\in\EC;\text{dist}_\sigma(x,z)<r\}$ and $\text{diam}_\sigma(E):=\text{sup}\{\text{dist}_{\sigma}(x,y); x,y\in E\}$, where $\text{dist}_\sigma(,)$ is the spherical metric.

$\bullet$ The set of critical points of a branched covering map $f$ is denoted by $\text{Crit}(f)$ and by $\MP_f$ the set of postcritical points.

$\bullet$ The Julia set and the Fatou set of a rational map $f$ are denoted by $\mathcal{J}_f$ and $\mathcal{F}_f$ respectively.

$\bullet$ The family of Fatou components is denoted by $\text{Comp}(\mathcal{F}_f)$ and the family of boundaries of Fatou components is $\mathcal{C}_f$.

\section{Some distortion estimations}\label{sec-distortion-esti}

In this section, we give some distortion estimations and useful lemmas, which will be used in the following sections.

\subsection{The modulus controls the relative distance}

Let $A$ be an annulus with non-degenerated boundary components. Then there exists a conformal map sending $A$ to a standard annulus $\{z\in\C:0<r<|z|<1\}$, where $r>0$ is uniquely determined by $A$. The \textit{modulus} of $A$ is defined as $\textup{mod}(A)=\frac{1}{2\pi}\log(1/r)$, which is invariant under conformal maps.

Recall that the \emph{relative distance} $\Delta(A,B)$ of two subsets $A$ and $B$ in $\EC$ is defined in \eqref{defi-relative-distance}. Now we prove that relative distance of two disjoint Jordan curves can be controlled by the modulus of the annulus between them.

\begin{lema}[{Modulus controls the relative distance}]\label{modulus and separated}
	Let $A\subseteq\EC$ be an annulus with two boundary components $\gamma_1$ and $\gamma_2$. If the modulus of $A$ satisfies $\textup{mod}(A)\geq m>0$, then there exists a constant $C(m)>0$ depending only on $m$ such that the relative distance of $\gamma_1$ and $\gamma_2$ satisfies $\Delta(\gamma_1, \gamma_2)\geq C(m)>0$.
\end{lema}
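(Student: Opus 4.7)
The idea is to reduce to the Gr\"{o}tzsch--Koebe setup already exploited in the proof of Lemma \ref{lema-distor-1}. Let $\Omega_1, \Omega_2$ be the two complementary Jordan disks of $A$ in $\EC$ with $\partial\Omega_i = C_i$, and set $B := \Omega_1 \cup C_1 \cup A$. Then $B$ is a Jordan disk in $\EC$ with boundary $C_2$ and $\Omega_1 \Subset B$, and $\textup{mod}(B \setminus \ol{\Omega_1}) = \textup{mod}(A) \geq m$. After possibly applying a M\"{o}bius transformation (which preserves the modulus) we may arrange that $\infty \in \Omega_2$, so $B \subset \C$.

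Let $g : \D \to B$ be a Riemann uniformization with $g(0) \in \Omega_1$. Applying the Gr\"{o}tzsch module theorem to the annulus $\D \setminus \ol{g^{-1}(\Omega_1)}$ (of modulus $\geq m$), exactly as in the proof of Lemma \ref{lema-distor-1}, provides an $r = r(m) \in (0,1)$ depending only on $m$ such that $g^{-1}(\ol{\Omega_1}) \subset \ol{\D_r}$, where $\D_r := \{z : |z| < r\}$.

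Now apply Koebe's distortion theorem (Theorem \ref{koebe-distortion-theorem}) together with the Koebe $1/4$ theorem to the univalent function $g : \D \to B \subset \C$. For every $z \in \ol{\D_r}$, the Koebe $1/4$ theorem applied at $z$ combined with the derivative estimate \eqref{K-distor-2} yields
\[
\dist(g(z), \partial B) \geq \frac{(1-|z|)|g'(z)|}{4} \geq \frac{(1-r)^2 |g'(0)|}{4(1+r)^3},
\]
while \eqref{K-distor-1} gives $\diam(g(\ol{\D_r})) \leq 2r|g'(0)|/(1-r)^2$. Setting $c(r) := (1-r)^4 / [8r(1+r)^3] > 0$ and combining these estimates,
\[
\dist(g(\ol{\D_r}), \partial B) \geq c(r) \cdot \diam(g(\ol{\D_r})).
\]
Since $C_1 = \partial\Omega_1 \subset g(\ol{\D_r})$ and $\partial B = C_2$, this gives $\dist(C_1, C_2) \geq c(r) \, \diam(C_1) \geq c(r) \, \min\{\diam(C_1), \diam(C_2)\}$, hence $\Delta(C_1, C_2) \geq c(r(m)) =: C(m) > 0$.

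The principal technical point is the M\"{o}bius normalization placing $\infty$ into $\Omega_2$. When $\infty \in A$, the set $B$ is not contained in $\C$ and Koebe's theorem cannot be applied directly to $g$, so one first applies an auxiliary M\"{o}bius transformation (sending a point of $\Omega_1$ to $\infty$); the Euclidean conclusion is then transferred back using that M\"{o}bius maps are bi-Lipschitz on compact sets away from their pole, which affects only the constant $C(m)$. Aside from this minor technicality, the argument is a direct application of standard univalent-function machinery.
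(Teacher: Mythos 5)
Your proof is correct in its main line of argument but takes a \emph{genuinely different route} from the paper's. The paper proves Lemma~\ref{modulus and separated} by one application of Teichm\"uller's module theorem: it sends, via a Euclidean similarity $h(t)=(t-x)/(x-z)$ (which preserves $\Delta$), two points of $C_1$ to $0$ and $-1$ and the nearest point $y\in C_2$ to a point of modulus $R$, obtains $m\le \textup{mod}(A)\le 2\mu(\sqrt{1/(1+R)})$, and reads off a lower bound on $R$, hence on $\Delta(C_1,C_2)\geq R/2$. Your argument instead uniformizes $B=\ol{\Omega_1}\cup A$ by a Riemann map $g:\D\to B$, locates $g^{-1}(\ol{\Omega_1})$ inside $\ol{\D_r}$ via Gr\"otzsch (as in Lemma~\ref{lema-distor-1}), and then derives the $\dist/\diam$ bound from the Koebe $1/4$ theorem together with \eqref{K-distor-1}--\eqref{K-distor-2}. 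Both are legitimate; the paper's is shorter and uses one extremal-length estimate, yours is a two-step distortion argument that is perhaps closer in spirit to Lemma~\ref{lema-distor-1} and Lemma~\ref{control turning}.

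There is, however, one genuine flaw in your closing paragraph. The normalization ``$\infty\in\Omega_2$'' cannot be achieved by an arbitrary M\"obius transformation while keeping the Euclidean quantity $\Delta$ controlled: unlike the paper's Euclidean similarity $h$, a general M\"obius map does not preserve $\Delta$, and the bi-Lipschitz constant you invoke depends on the distance from $C_1\cup C_2$ to the pole of the M\"obius map, which is not controlled by $m$ alone. So as written, the transfer step does not produce a constant depending only on $m$. When $\infty$ lies in one of the two complementary disks, the correct and simpler fix is symmetry: if $\infty\in\Omega_1$, run the identical argument with the bounded Jordan disk $B'=\ol{\Omega_2}\cup A=\EC\sm\Omega_1$, which gives $\dist(C_1,C_2)\geq c(r)\,\diam(C_2)\geq c(r)\min\{\diam(C_1),\diam(C_2)\}$ --- note your estimate already works regardless of which boundary curve is smaller, since $\min\{\diam(C_1),\diam(C_2)\}\le\diam(C_i)$ for either $i$. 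The remaining case $\infty\in A$ is not handled by your argument (nor, in fairness, is it justified by the paper's own ``without loss of generality, $A\subset\C$''); for the applications in Proposition~\ref{separated} the curves $C_1,C_2$ lie in the compact set $J_f\subset\C$, so a little extra care is needed there too, but this is a shared issue rather than a defect particular to your write-up.
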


\begin{proof}
	Without loss of generality, we assume that $A\subseteq \C$, $\gamma_1$, $\gamma_2$ are not singletons and $0<\diam(\gamma_1)\leq\diam(\gamma_2)$ and
	\begin{equation}\label{est-0}
	\dist(\gamma_1,\gamma_2)=|x-y|
	\end{equation}
	for $x\in \gamma_1$ and $y\in \gamma_2$. There exists a point $z\neq x$ in $\gamma_1$ such that $|x-z|=\mathop{\textup{sup}}^{}_{a\in \gamma_1}|a-x|$. Therefore, we have
	\begin{equation}\label{est-1}
	\diam(\gamma_1)\leq 2|x-z|.
	\end{equation}
	
	Consider the linear function $h(t)=(t-x)/(x-z)$, which maps $x,y,z$ to $0,(y-x)/(x-z)$ and $-1$. Then $h(A)$ is an annulus separating the points $0$ and $-1$ from $h(y)$ and $\infty$. Let
	\begin{equation*}
	R=|h(y)|=|(y-x)/(x-z)|.
	\end{equation*}
	By Teichm\"{u}ller's Module Theorem (see for example, \cite[p.\,56]{LV73}), we have
	\begin{equation*}
	m\leq \textup{mod}(A)=\textup{mod} (h(A))\leq 2\,\mu\left(\sqrt{\frac{1}{1+R}}\right),
	\end{equation*}
	where $r\mapsto\mu(r)$ is a continuous and strictly decreasing map defined on the interval $(0,1)$. By \eqref{est-0} and \eqref{est-1}, this means that the relative distance of $\gamma_1$ and $\gamma_2$ is
	\begin{equation*}
	\Delta(\gamma_1, \gamma_2)=\frac{\textup{dist}(\gamma_1,\gamma_2)}{\textup{diam}(\gamma_1)}\geq \frac{|x-y|}{2|x-z|}= \frac{R}{2}\geq \frac{1}{2}\left(\frac{1}{(\mu^{-1}(m/2))^2}-1\right):=C(m).
	\end{equation*}
	The proof is complete.
\end{proof}

\subsection{Distortion on the shape and turning}

Let $U$ be a proper subset of $\C$ and $z\in \text{int}(U)$. The \emph{shape} of $U$ about $z$, denoted by $\Shape(U, z)$, is defined as
\begin{equation*}
\Shape (U,z)=\frac{\sup_{w\in\partial U}|w-z|}{\inf_{w\in\partial U}|w-z|}=\frac{\sup_{w\in\partial U}|w-z|}{\dist(z,\partial U)}.
\end{equation*}
It is obvious that $\Shape(U, z)=\infty$ if and only if $U$ is unbounded and $\Shape(U, z)=1$ if and only if $U$ is a round disk centered at $z$.
In other cases, $1 < \Shape(U, z) < \infty$. Note that for any domain $\Omega$ bounded by a $K$-quasicircle, there exist a point $z\in\Omega$ and a constant $C(K)\geq 1$ such that $\text{Shape}(\Omega, z)\leq C(K)$.

Let $E$ be a non-degenerated continuum (non-singleton, connected and compact) in $\mathbb{C}$. For any $z_1, z_2 \in E$, the \emph{turning} of $E$ about $z_1$ and $z_2$ is defined by
\begin{equation*}
\Lambda(E; z_1, z_2) = \frac{\diam(E)}{|z_1 - z_2|}.
\end{equation*}
It is easy to see that $1\leq \Lambda(E; z_1, z_2) \leq \infty$ and $\Lambda(E; z_1, z_2) = \infty$ if and only if $z_1=z_2$.

\begin{lema}[{Distortion of the shape and turning when pullback, \cite[Lemma 6.1]{QWY12}}]\label{control turning}
	Let $U_i\Subset V_i\neq \C$ be a pair of Jordan disks with $\textup{mod}(V_2\setminus\overline{U}_2) \geq m>0$, where $i=1,2$.
	Suppose that $f : V_1 \to V_2$ is a proper holomorphic map of degree $d\geq 1$ and $U_1$ is a component of $f^{-1}(U_2)$. Then there are two positive constants $C_1(d,m)$ and $C_2(d,m)$ depending only on $d$ and $m$, such that
	
	$(1)$ For all $z\in U_1$, the shape satisfies
	\begin{equation*}
	\textup{Shape}(U_1,z)\leq C_1(d,m)\ \textup{Shape}(U_2,f(z)).
	\end{equation*}
	
	$(2)$ For any connected and compact subset $E$ of $U_1$ with the cardinal number $\sharp E \geq 2$ and any $z_1, z_2 \in E$, the turning satisfies
	\begin{equation*}
	\Lambda(E; z_1,z_2)\leq C_2(d,m)\,\Lambda(f(E); f(z_1),f(z_2)).
	\end{equation*}
\end{lema}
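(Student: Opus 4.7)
The plan is to reduce both statements to a single three-point comparison inequality and then to establish that inequality by straightening $g$ via Riemann mappings to a Blaschke product.

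First, I would show that both (1) and (2) follow from the estimate
$$
\frac{|a-z|}{|b-z|} \leq C(d,m)\,\frac{|g(a)-g(z)|}{|g(b)-g(z)|}\qquad(*)
$$
for every $z\in U_1$ and $a,b\in \ol U_1\setminus\{z\}$. For (1), I take $a,b\in\partial U_1$ to be the points farthest from and closest to $z$; since $g(a),g(b)\in\partial U_2$, the right-hand side is bounded by $C(d,m)\,\tu{Shape}(U_2,g(z))$. For (2), I pick $a,b\in E$ realizing $\diam(E)$, apply $(*)$ with $z=z_1$ to the pairs $(a,z_2)$ and $(b,z_2)$, combine with the triangle inequality $|a-b|\leq|a-z_1|+|z_1-b|$ and the bounds $|g(a)-g(z_1)|,|g(b)-g(z_1)|\leq\diam(g(E))$, and divide by $|z_1-z_2|$ to deduce $\Lambda(E;z_1,z_2)\leq 2C(d,m)\,\Lambda(g(E);g(z_1),g(z_2))$.

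To establish $(*)$, I invoke Lemma \ref{lem:KL} to obtain $\tu{mod}(V_1\setminus\ol U_1)\geq m/d$. Let $\phi_1:(\D,0)\to(V_1,z)$ and $\phi_2:(\D,0)\to(V_2,g(z))$ be Riemann mappings, so that $B:=\phi_2^{-1}\circ g\circ\phi_1$ is a proper holomorphic self-map of $\D$ fixing $0$, hence a finite Blaschke product of degree $d$. The Gr\"otzsch argument from the proof of Lemma \ref{lema-distor-1} yields radii $r_i=r_i(d,m)<1$ with $\phi_i^{-1}(\ol U_i)\subset\ol\D_{r_i}$. Applying Koebe's distortion theorem to $\phi_1$ on $\D_{r_1}$ and to $\phi_2$ on $\D_{r_2}$ reduces $(*)$ to its analogue $|\tilde a|/|\tilde b|\leq C\,|B(\tilde a)|/|B(\tilde b)|$ for $\tilde a,\tilde b\in\phi_1^{-1}(\ol U_1)$, with multiplicative losses depending only on $d$ and $m$.

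I expect the main obstacle to be this last Blaschke-product estimate. A direct Schwarz-Pick bound gives $|B(\zeta)|\leq|\zeta|$, which goes the wrong way, and the Euclidean distortion of $B$ on $\ol\D_{r_1}$ depends sensitively on the positions of the zeros of $B$, not merely on their number. My plan is to avoid pointwise analysis of $B$ and argue by modulus: encode the ratio $|\tilde a|/|\tilde b|$ through the modulus of a Teichm\"uller-type annulus in $\D$ separating $\{0,\tilde b\}$ from $\{\tilde a,\partial\D\}$, in the spirit of Lemma \ref{modulus and separated}; push the corresponding annulus forward under $B$ using Lemma \ref{lem:KL}, which costs at most a factor of $d$ in modulus; and read off the desired bound on $|B(\tilde a)|/|B(\tilde b)|$ from the resulting modulus via the same Teichm\"uller estimate. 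This should furnish $(*)$ with $C=C(d,m)$ and complete the proof.
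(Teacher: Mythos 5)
The reduction to the single three-point inequality $(*)$ is the crux of your argument, and $(*)$ is false as stated. It claims that for all $z\in U_1$ and $a,b\in\overline U_1\setminus\{z\}$,
\begin{equation*}
\frac{|a-z|}{|b-z|}\ \leq\ C(d,m)\,\frac{|g(a)-g(z)|}{|g(b)-g(z)|}.
\end{equation*}
But whenever $g|_{U_1}$ has degree at least two, the fibre $g^{-1}(g(z))\cap U_1$ typically contains a point $a\neq z$; choosing that $a$ makes the right-hand side vanish while the left-hand side is positive. A concrete instance: take $g(\zeta)=\zeta^2$, $U_1=U_2=\D$, $V_1=\D(0,\sqrt 2)$, $V_2=\D(0,2)$, $z=1/2$, $a=-1/2$, $b$ arbitrary. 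All hypotheses of the lemma hold with $d=2$, $m=\tfrac{1}{2\pi}\log 2$, yet $g(a)=g(z)$, so $(*)$ fails. In your Blaschke-product model this is the zero of $B$ at $\zeta_0=\phi_1^{-1}(a)\neq 0$ lying inside $\phi_1^{-1}(U_1)\subset\overline\D_{r_1}$. You correctly flag this as the main difficulty, but it is not an obstacle to be circumvented by a modulus trick --- it is a counterexample to the target estimate $|\tilde a|/|\tilde b|\leq C\,|B(\tilde a)|/|B(\tilde b)|$, so no argument can establish it. (Separately, the proposed step of "pushing an annulus forward under $B$ using Lemma \ref{lem:KL}" is not available: that lemma compares $\textup{mod}(V_1\setminus\overline U_1)$ with $\textup{mod}(V_2\setminus\overline U_2)$ only when $U_1$ is a preimage component of $U_2$, and a non-injective $B$ need not carry a general Teichm\"uller-type annulus to an annulus at all.)

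Your deduction of $(1)$ from $(*)$ is less exposed, since there you only invoke $(*)$ with $a,b\in\partial U_1$; then $g(a),g(b)\in\partial U_2$, so $|g(a)-g(z)|$ and $|g(b)-g(z)|$ are bounded below by $\dist(g(z),\partial U_2)>0$ and the degenerate failure above cannot occur. But your deduction of $(2)$ applies $(*)$ with $z=z_1$ and $a,z_2$ interior points of $E\subset U_1$, which is exactly where the counterexample lives, so that deduction is broken. The lemma itself is true (it is quoted from \cite[Lemma 6.1]{QWY} and not reproved in this paper), but it requires a genuinely different mechanism: one must compare $\diam(E)/|z_1-z_2|$ with $\diam(g(E))/|g(z_1)-g(z_2)|$ directly rather than through a pointwise ratio inequality that cannot survive nontrivial fibres of $g$ inside $U_1$.
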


The lemma stated above means that the shape and the turning of the interior boundary of an annulus can be controlled under a proper holomorphic map if the modulus of this annulus has a lower bound.

By definition (see for example, \cite[p.\,100]{LV73}), a Jordan curve $\gamma$ is called a \textit{quasicircle} if there exists a positive constant $K\geq 1$ such that for any different points $x,y\in \gamma$, the turning of $\gamma$ about $x$ and $y$ satisfies
\begin{equation*}
\Lambda(\gamma';x,y)\leq K,
\end{equation*}
where $\gamma'$ is one of the two components of $\gamma\setminus\{x,y\}$ with smaller diameter.
By Lemma \ref{control turning} (2), we have the following immediate corollary.

\begin{cor}\label{pullback of quasicircle}
	Let $U_i\Subset V_i\neq \C$ be a pair of Jordan disks, where $i=1,2$. Suppose that $\textup{mod}(V_2\setminus\overline{U}_2) \geq m>0$ and $f: V_1\to V_2$ is a conformal map with $f(U_1)=U_2$. If $\partial U_2$ is a $K$-quasicircle, then there is a constant $C(K,m)\geq 1$ such that $\partial U_1$ is a $C(K,m)$-quasicircle.
\end{cor}

When one pushforwards a topological disk, the image may be complicated. We do not even know whether it is simply connected or not. However, the following Lemma gives us a control of the shape.

\begin{lema}[{Distortion of shape when pushforward, \cite[Corollary 2.3]{Yin00}}]\label{forward_dis}
	Let $U\ni z_0$ be a simply connected domain in $\C$ and $f:U\to B(w_0, 2\delta)$ a proper holomorphic map of $\deg f\leq d$ with $w_0=f(z_0)$. If $f$ maps $B(z_0,r)\subseteq U$ into $B(w_0,\delta)$, then there exists a constant $K$ depending only on $d$ so that $\Shape(f(B(z_0,r)),w_0)\leq K$.
\end{lema}

We will use the following estimations on the modulus in the future.

\begin{lema}[{\cite[Lemma 4.5]{KL09}}]\label{lem:KL09}
	Let $U_i\Subset V_i\neq\C$ be a pair of Jordan disks, where $i=1,2$. Suppose that $f:V_1\rightarrow V_2$ is a proper holomorphic map of degree $d\geq 1$ and $U_1$ is a component of $f^{-1}(U_2)$. Then
	$$\textup{mod}(V_1\setminus\overline{U}_1)\leq\textup{mod}(V_2\setminus\overline{U}_2)\leq d\ \textup{mod}(V_1\setminus\overline{U}_1).$$
\end{lema}

\subsection{Ma\~{n}\'{e}'s theorem and a complement}

In this subsection, we first recall a theorem due to Ma\~{n}\'{e} and then give a complementary lemma under the same condition of Ma\~{n}\'{e}'s theorem. They will be used frequently later.

\begin{thm}[{\cite[Theorem II]{Man93}}]\label{mane lemma}
	Let $f:\EC\to\EC$ be a rational map with degree at least two. If a point $x\in \mathcal{J}_f$ is not a parabolic periodic point and is not contained in the $\omega$-limit set of a recurrent critical point, then for any $\epsilon>0$ there exists an open neighborhood $U_x$ of $x$ such that:
	
	\textup{(P1)} For all $n\geq 0$, every component of $f^{-n}(U_x)$ has diameter $\leq\epsilon$;
	
	\textup{(P2)} There exists $d>0$ such that for all $n\geq0$ and every connected component $V$ of $f^{-n}(U_x)$, the degree of $f^{\circ n}:V\to U_x$ is $\leq d$.
\end{thm}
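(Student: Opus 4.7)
The plan is to establish the bounded-degree statement (C2) first, since once a uniform degree bound holds, the diameter bound (C1) follows from a standard Koebe distortion plus hyperbolic contraction argument.

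First I would set up the ``bad'' set $E := \tu{Par}(f) \cup \bigcup_{c \in \Crit(f),\, c \in \omega(c)} \omega(c)$, where $\tu{Par}(f)$ denotes the set of parabolic periodic points of $f$. By hypothesis $x \notin E$, and $E$ is closed and forward $f$-invariant, so one may choose a round disk $U_x$ centered at $x$ with $\overline{U}_x \cap E = \es$.

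For (C2), the core tool is the Riemann--Hurwitz formula: since each component $V$ of $f^{-n}(U_x)$ is a topological disk, $\deg(f^{\circ n}|_V) - 1$ equals the total ramification of $f^{\circ n}$ on $V$, i.e.\ a weighted count of critical points $\xi$ of $f^{\circ n}$ lying in $V$. Each such $\xi$ satisfies $f^{\circ j}(\xi) \in \Crit(f)$ for some $0 \le j < n$, so by the chain rule the total ramification on $V$ is controlled by the number of times a forward orbit $\{f^{\circ k}(c)\}_{k \ge 0}$ of a critical point $c$ visits $U_x$ along the preimage branch corresponding to $V$. For recurrent $c$, the condition $\omega(c) \cap \overline{U}_x = \es$ forces only finitely many $k$ with $f^{\circ k}(c) \in \overline{U}_x$. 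The non-recurrent case is more delicate: one may still have $x \in \omega(c)$ for non-recurrent $c$ so visits can be infinite, but the fact that $c \notin \omega(c)$ is exploited to separate consecutive visits by iterates leaving a fixed neighborhood of $c$, bounding the per-branch contribution. Summing over the finitely many critical points and using the absence of parabolic cycles to rule out infinitely slow returns, one extracts a uniform degree bound $d$, independent of $n$ and of $V$.

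The main obstacle is precisely this non-recurrent case and the task of ruling out accumulation of ramification on a single branch, which is the technical heart of Ma\~{n}\'{e}'s argument and requires a delicate compactness analysis. Once (C2) holds, for (C1) I would work in the hyperbolic metric on $\EC \setminus E'$ for a suitable closed forward-invariant set $E' \supset E$ disjoint from $\overline{U}_x$ (the complement being hyperbolic after minor enlargement). Preimages of $U_x$ under $f^{\circ n}$ contract in this metric by the Schwarz lemma, and the uniform degree bound $d$ allows one to convert hyperbolic contraction into Euclidean shrinkage via Koebe distortion applied through a degree-$d$ proper holomorphic model of $f^{\circ n}|_V$. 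Choosing $\diam(U_x)$ sufficiently small initially then yields $\diam(V) \le \epsilon$ for every $n$ and every component $V$ of $f^{-n}(U_x)$, as required.
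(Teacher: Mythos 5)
This statement is not proved in the paper at all: it is imported verbatim from Ma\~n\'e's paper and cited as \cite[Theorem II]{Ma}. So there is no in-paper argument to compare yours against; what you have written is a sketch of Ma\~n\'e's own proof, and should be judged on those terms.

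On that footing, the sketch has a real logical gap in its proposed ordering. You want to establish the degree bound (C2) first and then deduce the diameter bound (C1) from it, but the two are not separable in that direction. The Riemann--Hurwitz count of ramification of $f^{\circ n}$ on a component $V$ reduces, as you say, to counting how many of the intermediate images $W_j := f^{\circ j}(V)$ contain a critical point $c$. For a recurrent $c$ this is fine once $\overline{U}_x$ is chosen disjoint from $\omega(c)$. But for a non-recurrent $c$ with $x\in\omega(c)$ (which the hypothesis allows), the orbit of $c$ visits $U_x$ infinitely often, and the only mechanism that forces a given backward branch $W_0\subset W_1\subset\cdots$ to pick up $c$ only finitely many times is the following: if $c\in W_{j'}$ and $c\in W_{j}$ with $j<j'$, then $f^{\circ(j'-j)}(c)\in W_{j'}$, and this contradicts $c\notin\omega(c)$ \emph{provided $W_{j'}$ is contained in a non-return disk $D$ around $c$}. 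That containment is a diameter estimate on a preimage component of $U_x$ --- i.e.\ exactly the kind of shrinking that (C1) asserts. So your per-branch bound on ramification already needs diameter control as input; it cannot be obtained first and then used to derive (C1). Ma\~n\'e's actual argument runs the two estimates as a bootstrap rather than in series. You do flag the non-recurrent case as ``the technical heart'' and leave it open, which is honest, but as written the proposal both misorders the dependencies and omits the essential step, so it does not constitute a proof.

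The surrounding ingredients are fine: the bad set $E=\tu{Par}(f)\cup\bigcup_{c\,\tu{recurrent}}\omega(c)$ is closed and forward invariant, so shrinking $U_x$ to avoid it is legitimate; and once both (C1) and (C2) are in hand, the Koebe/hyperbolic-contraction machinery you describe for comparing Euclidean and hyperbolic sizes through a degree-$\le d$ proper map is standard. The problem is strictly in the claim that (C2) can be established independently of (C1).
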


When we pull back a Jordan disk $U$ by a rational map $f$, there maybe exist a component $W$ of $f^{-1}(U)$ which is not simply connected.
If the boundary $\partial U$ avoids the critical values, then $\partial W$ is the union of finitely many disjoint Jordan curves $\{C_i\}$. Moreover, we have $f(C_i)=\partial U$ for each $i$. Note that $W$ is a connected set whose boundary consists of finitely many Jordan curves. We have $\EC\setminus \overline{W}=\bigcup_i V_i$, where each $V_i$ is a Jordan disk bounded by the Jordan curve $C_i$. Since the restriction of $f$ on $V_i$ is a holomorphic branched covering and $f(\partial V_i)=\partial U$, we have $f(V_i)=\EC$ or $f(V_i)=\EC\setminus\overline{U}$. In other words, the image of each component of the complement of $\overline{W}$ under $f$ is either $\EC$ or $\EC\setminus\overline{U}$. See Figure \ref{simply_connect} for an example.

\begin{figure}[!htpb]
	\setlength{\unitlength}{1mm}
	\centering
	\includegraphics[width=110mm]{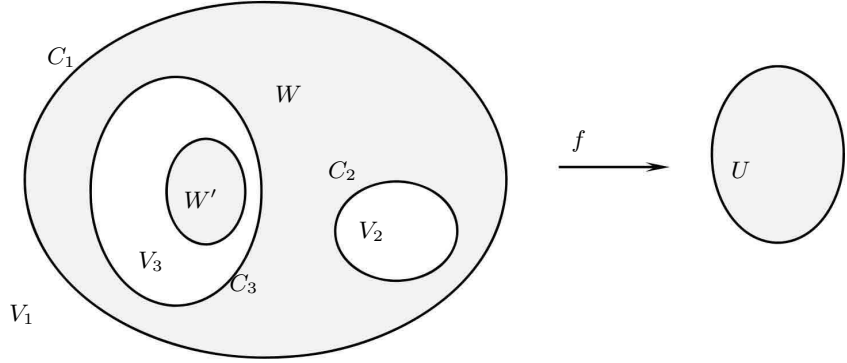}
	\small
	\put(-15,24){$U$}
	\put(-36,28){$f$}
	\put(-64,16){$V_2$}
	\put(-93,12){$V_3$}
	\put(-110,5){$V_1$}
	\put(-75,34){$W$}
	\put(-87,20){$W'$}
	\put(-81,9){$C_3$}
	\put(-68,24){$C_2$}
	\put(-105,39){$C_1$}
	\caption{The pull back of a simply connected domain $U$ under the rational map $f$ with degree $4$, where $f(W)=U$ and $\partial W=C_1\cup C_2\cup C_3$. The complement of $\overline{W}$ consists of $3$ simply connected components $V_1$, $V_2$ and $V_3$. In particular, $f(V_1)=f(V_2)=\EC\setminus\overline{U}$ and $f(V_3)=\EC$. Moreover, $f^{-1}(U)=W\cup W'$, $W$ contains $4$ critical points of $f$ and $V_3\setminus W'$ (the white annulus) contains two.}
	\label{simply_connect}
\end{figure}

In the rest of this article, we only consider rational maps whose Julia sets are not the whole complex sphere. Therefore, after conjugating $f$ by a suitable M\"obius transformation, we always assume that $\infty$ lies in the Fatou set. This means that $\MJ_f$ is a compact set in $\C$.

\begin{lema}\label{remk}
	Let $f$ be a rational map with degree at least two and $\MJ_f\subseteq\C$. Suppose that $x\in \MJ_f$ is neither a parabolic periodic point nor contained in the $\omega$-limit set of a recurrent critical point. Then there is an open neighborhood $U_x$ of $x$ such that
	
	\textup{(P3)} For all $n\geq 0$, every connected component of $f^{-n}(U_x)$ is simply connected.
\end{lema}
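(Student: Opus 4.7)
My plan is to strengthen Ma\~{n}\'{e}'s theorem to include (C3) by a Riemann--Hurwitz induction, once the constant $\epsilon$ in (C1) has been chosen small relative to the critical combinatorics of $f$. The preparation is to fix $\epsilon_0>0$ so small that (a) any set in $\EC$ of diameter at most $\epsilon_0$ contains at most one critical value of $f$; and (b) for every critical value $v$ of $f$ and every simply connected domain $V'\ni v$ of diameter at most $\epsilon_0$, the preimage $f^{-1}(V')$ splits into $|f^{-1}(v)|$ simply connected components, one containing each point of $f^{-1}(v)$. Property (a) only uses finiteness of $f(\Crit(f))$, while (b) reduces to the standard local normal form $z\mapsto z^e$ near each preimage of each critical value; uniformity over the finitely many pairs $(v,p)$ with $p\in f^{-1}(v)$ permits a single choice of $\epsilon_0$. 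Now apply Ma\~{n}\'{e}'s Theorem~\ref{mane lemma} with $\epsilon=\epsilon_0$ to obtain $U_x$ satisfying (C1) and (C2); shrinking $U_x$ preserves both conditions (degrees cannot increase under passage to a smaller neighborhood), so I may assume that $U_x$ is simply connected and of diameter at most $\epsilon_0$.

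The core of the argument is an induction on $n\geq 0$ showing that every component of $f^{-n}(U_x)$ is simply connected. The base $n=0$ is immediate. For the step, let $V$ be a component of $f^{-n}(U_x)$ and let $V'$ be the component of $f^{-(n-1)}(U_x)$ containing the connected image $f(V)$, so that $V'$ is simply connected by induction and $V$ is a component of $f^{-1}(V')$. Condition (C1) gives $\diam(V')\leq\epsilon_0$, whence by (a) $V'$ contains at most one critical value of $f$. If none, then $f\colon V\to V'$ is an unbranched covering of a simply connected domain, so the connected cover $V$ is homeomorphic to $V'$. If there is exactly one critical value $v$, then by (b) the component $V$ is the preimage component around some $p\in f^{-1}(v)$: when $p$ is a critical point of local degree $e$, Riemann--Hurwitz applied to the degree-$e$ map $f\colon V\to V'$ with critical weight $e-1$ in $V$ yields $\chi(V)=e-(e-1)=1$; when $p$ is a non-critical preimage, $f\colon V\to V'$ is a conformal isomorphism. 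In either case $V$ is simply connected, closing the induction.

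The one delicate step is the verification of property (b) in the preparation, but this is a purely local statement near the finite set $f^{-1}(f(\Crit(f)))$ and reduces to the local normal form of a holomorphic map near a critical or regular point. I note in passing that the degree bound (C2) from Ma\~{n}\'{e}'s theorem is not invoked anywhere in the argument: condition (C1), combined with the finite combinatorics of $\Crit(f)$ that enters the choice of $\epsilon_0$, is by itself enough to upgrade Ma\~{n}\'{e}'s theorem to also yield (C3).
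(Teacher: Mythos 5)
Your proof is correct, but it takes a genuinely different route from the paper's. You establish (C3) by a direct induction on the pullback depth: you first fix $\epsilon_0$ small compared to the critical geometry of $f$, so that any set of diameter $\leq\epsilon_0$ meets at most one critical value and the preimage of any small simply connected domain around a critical value has simply connected components; then unbranched covering theory (no critical value present) or Riemann--Hurwitz (exactly one critical value, one preimage point) carries simple connectivity through each single pullback step. The paper instead exploits the hypothesis $J_f\subset\C$ via a global topological argument: it sets $\delta_0\leq\dist(f^{-1}(\infty),J_f)/2$ in Ma\~{n}\'{e}'s theorem and observes that if some pullback $W_n$ of $U_x$ had a bounded complementary component $V_n$, then the small-diameter condition (C1) would keep $V_n, f(V_n),\dots,f^{\circ(n-1)}(V_n)$ away from $f^{-1}(\infty)$, whereas $f^{\circ n}(V_n)$ must equal $\EC$ or $\EC\setminus\overline{U}_x$ and so contains $\infty$ --- a contradiction. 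Your local version is more elementary and insensitive to where $\infty$ sits; the paper's is shorter once $\infty$ has been moved into the Fatou set and made quantitatively far from $J_f$. Your remark that (C2) plays no role is also correct and is mirrored in the paper's proof, which likewise uses only (C1). One expository note: since your property (b) already asserts the simple connectivity you need, the Riemann--Hurwitz count in the inductive step is really a proof of (b) rather than a separate application of it; it would read more cleanly either to prove (b) once in the preparation via exactly that count on the local model $z\mapsto z^e$ and then cite it, or to dispense with (b) as a named hypothesis and run Riemann--Hurwitz directly inside the induction.
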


\begin{proof}
	By the assumption that $\infty\not\in \mathcal{J}_f$, the grand orbit of $\infty$ lies in the Fatou set of $f$. Let $\delta_0>0$ be a small positive number such that
	\begin{equation}\label{delta0}
	0<\delta_0\leq\textup{dist}(f^{-1}(\infty),\mathcal{J}_f)/2.
	\end{equation}
	By Theorem \ref{mane lemma}, there exists an open neighborhood $U_x'$ of $x$ such that every component of $f^{-n}(U_x')$ has diameter $\leq\delta_0$ for all $n\geq 0$.
	
	Let $U_x:=B(x,\delta_x)$ be the largest round disk which is contained in $U_x'$. We claim that every component $W_n$ of $f^{-n}(U_x)$ is simply connected. If not, let $V_n$ be a bounded component of $\mathbb{C}\setminus \overline{W}_n$, where $n\geq 1$. Then $\partial V_n\subseteq\partial W_n$ and so $\textup{diam}(V_n)\leq\delta_0$. This means that $V_n$ cannot intersect $f^{-1}(\infty)$. Inductively, one can easily check that $f^{\circ k}(V_n)\cap f^{-1}(\infty)=\emptyset$ for all $0\leq k\leq n-1$. It follows that $\infty\not\in f^{\circ n}(V_n)$, which is a contradiction since  $f^{\circ n}(V_n)=\EC$ or $f^{\circ n}(V_n)=\EC\setminus\overline{U}_x$. Therefore, such $V_n$ does not exist. This means that $W_n$ is simply connected.
\end{proof}

Lemma \ref{remk} is useful in the following since we need to obtain the simply connected preimages of a simply connected domain.

\section{Geometry of the boundaries of Fatou components}\label{sec-pf-of-main}

If a rational map $f$ whose Julia set $\mathcal{J}_f$ is a carpet, then $f$ cannot be a polynomial. In fact,
the intersection of the closures of the bounded Fatou components (if any) and the basin of infinity of $f$ is non-empty provided $f$ is a polynomial since the Julia set $\MJ_f$ is the boundary of the basin of infinity. In order to prove Theorem \ref{thm-main-1}, we need to show that the peripheral circles of the carpets are uniform quasicircles and uniformly relatively separated by Bonk's criterion.

\subsection{Sufficiency for the property of uniform quasicircles}

In this subsection, we prepare some lemmas and give two sufficient conditions such that the boundaries of the Fatou components are uniform quasicircles. We first discuss the regularity of the boundaries of the periodic Fatou components and then spread the results to their all preimages.

\begin{lema}\label{jordan_curve}
	Let $\Gamma$ be a Jordan curve in the plane $\mathbb{C}$. Then there exists a constant $\delta_\Gamma>0$ depending only on $\Gamma$ such that, for any Jordan subarc $\gamma\subseteq\Gamma$ with $\diam(\gamma)\leq \delta_\Gamma$, one has $\textup{diam}(\gamma)<\textup{diam}(\Gamma\setminus\gamma)$.
\end{lema}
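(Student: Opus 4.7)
The plan is to set $\delta_\Gamma$ equal to a constant strictly less than $\diam(\Gamma)/2$, for instance $\delta_\Gamma := \diam(\Gamma)/3$, and to argue by contradiction using a simple subadditivity property of the diameter.

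Concretely, I would first observe that since $\Gamma$ is a Jordan curve in $\C$ containing infinitely many points, $\diam(\Gamma)$ is a well-defined positive real number, so $\delta_\Gamma > 0$. Next, let $\gamma \subset \Gamma$ be any Jordan subarc with $\diam(\gamma) \leq \delta_\Gamma$, and write $p, q$ for its two endpoints. The closures $\overline{\gamma}$ and $\overline{\Gamma \setminus \gamma}$ both contain the endpoints $p$ and $q$, so they intersect. The main computational step is the standard observation that for any two subsets $A, B$ of $\C$ with $A \cap B \neq \emptyset$, one has $\diam(A \cup B) \leq \diam(A) + \diam(B)$; applied to $A = \overline{\gamma}$ and $B = \overline{\Gamma \setminus \gamma}$, and using $\diam(\overline{S}) = \diam(S)$, this yields
\begin{equation*}
\diam(\Gamma) \;\leq\; \diam(\gamma) + \diam(\Gamma \setminus \gamma).
\end{equation*}

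Now suppose for contradiction that $\diam(\gamma) \geq \diam(\Gamma \setminus \gamma)$. Combining with the displayed inequality,
\begin{equation*}
\diam(\Gamma) \;\leq\; 2\,\diam(\gamma) \;\leq\; 2\,\delta_\Gamma \;=\; \tfrac{2}{3}\diam(\Gamma),
\end{equation*}
which contradicts $\diam(\Gamma) > 0$. Hence $\diam(\gamma) < \diam(\Gamma \setminus \gamma)$, as required.

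There is no real obstacle here: the lemma is a soft geometric fact, and the only thing to be a bit careful about is that $\gamma$ and $\Gamma \setminus \gamma$ share their endpoints in the closure, which is what allows the subadditivity of $\diam$ to be invoked. No topology of Jordan curves beyond connectedness and the existence of two endpoints for the subarc is actually needed, so the proof is short and self-contained.
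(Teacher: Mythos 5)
Your proof is correct, and it takes a genuinely different and more elementary route than the paper's. The paper argues via compactness: it defines $h(x,y)$ on $\Gamma\times\Gamma$ to be the diameter of the larger of the two components of $\Gamma\setminus\{x,y\}$, observes that $h$ is continuous and everywhere positive, takes its minimum $\delta'>0$ over the compact set $\Gamma\times\Gamma$, and sets $\delta_\Gamma=\delta'/2$. That approach requires one to check continuity of $h$ (including at the diagonal, where one arc degenerates) and yields a non-explicit constant. Your approach instead exploits only the subadditivity $\diam(A\cup B)\le\diam(A)+\diam(B)$ for intersecting sets, using the shared endpoints of $\gamma$ and $\Gamma\setminus\gamma$ to see that $\overline{\gamma}\cap\overline{\Gamma\setminus\gamma}\ne\emptyset$, and produces the explicit constant $\delta_\Gamma=\diam(\Gamma)/3$; any constant strictly less than $\diam(\Gamma)/2$ would do. This is shorter, avoids any topology beyond the fact that a subarc has two endpoints, and gives a quantitative bound, whereas the paper's compactness argument could in principle yield a larger (curve-dependent) $\delta'$ for well-behaved $\Gamma$ but at the cost of being non-constructive. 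Both establish exactly what is needed for the application in Lemma \ref{periodic uniform quasicircles}.
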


\begin{proof}
	Consider the function $h: \Gamma\times\Gamma\to\mathbb{R}$, $(x,y)\mapsto\textup{diam}(L(x,y))$, where $L(x,y)$ is one of the two components of $\Gamma\setminus\{x,y\}$ with larger diameter (we define $h(x,y):=\diam(\Gamma)$ if $x=y$). Obviously, the map $h$ is continuous. Since $\Gamma\times\Gamma$ is compact, the function $h$ has a minimum $\delta>0$. Then the lemma holds if we set $\delta_\Gamma=\delta/2$.
\end{proof}

\begin{lema}\label{local_homeomorphic}
	Let $f$ be a rational map whose Fatou components are Jordan domains. Then for any Fatou component $U$, the restriction $f|_{\partial U}$ is locally one to one.
\end{lema}

\begin{proof}
	If not, there exist a critical point $z\in\partial U$ and two arbitrary small closed subarcs $\gamma_1,\gamma_2$ of $\partial U$ with $\gamma_1\cap\gamma_2=\{z\}$ such that $f(\gamma_1)=f(\gamma_2)$. In the neighborhood of $z$, we choose open arcs $\alpha_1$ resp. $\alpha_2$ in $U$ connecting the endpoints of $\gamma_1$ resp. $\gamma_2$ such that $\alpha_1\cup\gamma_1$ and $\alpha_2\cup\gamma_2$ bound disjointed Jordan domains $D_1,D_2$ respectively.
	
	On one hand since the map $f$ acts as $z\mapsto z^k$ and the restriction $f|_{U}$ is one to one locally at $z$, we can assume that $f$ on the arcs $\gamma_1,\gamma_2,\alpha_1,\alpha_2$ is one to one, and that $f$ maps $D_1,D_2$ respectively into the same Fatou component $V:=f(U)$ with $f(D_1)\cap f(D_2)=\es$. Note that $V$ is a Jordan domain by the assumptions. On the other hand, the two domains $f(D_1),f(D_2)$ which are disjoint from Jordan curve $\partial V$ must be separated by $\partial V$, as their boundaries share a common Jordan arcs $f(\gamma_1)\subseteq \partial V$. Thus we get a contradiction. The proof is complete.
\end{proof}

\begin{lema}[{The boundaries of periodic Fatou components are quasicircles}]\label{periodic uniform quasicircles}
	Let $f$ be a rational map whose Fatou components are Jordan domains. Suppose that $U$ is a periodic Fatou component of $f$ whose boundary $\partial U$ contains neither parabolic periodic points nor the points in $\omega(c)$ for any recurrent critical point $c$. Then $\partial U$ is a quasicircle.
\end{lema}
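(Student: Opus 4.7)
The plan is to pull back a trivially macroscopic turning bound down to every small scale on $\partial U$, using Ma\~n\'e's theorem to furnish Koebe-type distortion control along backward orbits. First I would replace $f$ by $g = f^{\circ p}$, where $p$ is the period of $U$, so that $g(U)=U$ and $g(\partial U)=\partial U$. The hypothesis rules out every non-attracting type of periodic Fatou component: a parabolic basin would put a parabolic periodic point on $\partial U$, and a Siegel disk or Herman ring component has its boundary contained in the $\omega$-limit set of some recurrent critical point (a theorem of Ma\~n\'e). Hence $U$ must be (super)attracting, and by Lemma \ref{local_homeomorphic} the boundary map $g|_{\partial U}\colon\partial U\to\partial U$ is a covering of degree at least $2$.

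Next I would build a uniform Ma\~n\'e cover of $\partial U$. For each $x\in\partial U$, Theorem \ref{mane lemma} and Lemma \ref{remk} give a round disk $N_x=\mathbb{D}(x,r_x)$ whose every backward iterate under $g$ decomposes into simply connected Jordan disks of small diameter on which $g^{\circ n}$ has degree at most some $d_x$; by shrinking $r_x$, the slightly larger disk $M_x=\mathbb{D}(x,2r_x)$ is also a Ma\~n\'e neighborhood of the same type with $\textup{mod}(M_x\setminus\overline{N_x})=(\log 2)/(2\pi)$. A finite subcover of $\partial U$ then yields a uniform degree bound $d$ and a Lebesgue number $\rho>0$. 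Now fix $x,y\in\partial U$; pairs with $|x-y|$ bounded below give trivially bounded turning, so assume $|x-y|$ is small and let $\alpha$ be the smaller arc of $\partial U\setminus\{x,y\}$ (well-defined by Lemma \ref{jordan_curve}). Let $n$ be the first time either $\diam(g^{\circ n}(\alpha))$ or $|g^{\circ n}(x)-g^{\circ n}(y)|$ reaches $\rho/L$, where $L$ is a Lipschitz constant for $g$. For $\alpha$ initially small enough, $g^{\circ n}|_\alpha$ is injective throughout, so $g^{\circ n}(\alpha)$ is an arc of diameter at most $\rho$ lying in some $N_{x_j}$. Letting $W\Subset W'$ be the components of $g^{-n}(N_{x_j})\Subset g^{-n}(M_{x_j})$ containing $\alpha$, Lemma \ref{remk} makes them Jordan disks, Lemma \ref{lem:KL} yields $\textup{mod}(W'\setminus\overline{W})\geq(\log 2)/(2\pi d)=:m_0$, and Lemma \ref{control turning} applied to $g^{\circ n}\colon W'\to M_{x_j}$ gives
\begin{equation*}
\Lambda(\alpha;x,y)\leq C_2(d,m_0)\,\Lambda\bigl(g^{\circ n}(\alpha);\,g^{\circ n}(x),\,g^{\circ n}(y)\bigr).
\end{equation*}

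It remains to bound the right-hand turning uniformly. If the stopping time is triggered by the endpoint distance reaching $\rho/L$, the bound is immediate since $\diam(g^{\circ n}(\alpha))\leq\rho$; the delicate case is when only the diameter reaches $\rho/L$ while the endpoints remain much closer. This is the main obstacle: one must show that an arc of $\partial U$ whose image under one further application of $g$ has diameter of order $\rho$ cannot have its endpoints much closer than $\rho$. I expect this to follow from a compactness argument combining the degree-$\geq 2$ covering action of $g$ on $\partial U$ (so arcs genuinely stretch) with the Koebe-type distortion at scale $\rho$ supplied by the Ma\~n\'e cover. Once this macroscopic bound is in place, the two estimates combine to give a uniform $\Lambda(\alpha;x,y)\leq K$ for all sufficiently close $x,y\in\partial U$, and hence $\partial U$ is a $K$-quasicircle.
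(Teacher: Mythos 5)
Your approach is essentially the same as the paper's: pull the turning bound back along the orbit using Ma\~n\'e's theorem for uniform degree/modulus control and Lemma \ref{control turning} for the distortion. The first paragraph (reducing to the attracting case via the classification of periodic Fatou components) is a reasonable aside, but the paper does not use it --- Theorem \ref{mane lemma} applies directly without first identifying the component type --- and the degree-$\geq 2$ covering structure on $\partial U$ plays no role in the actual argument.

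The genuine gap is exactly where you flag it: bounding $\Lambda\bigl(g^{\circ n}(\alpha);\,g^{\circ n}(x),\,g^{\circ n}(y)\bigr)$ when the stopping time is triggered by the diameter rather than the endpoint distance. You ``expect this to follow from a compactness argument combining the degree-$\geq 2$ covering action with the Koebe-type distortion,'' but this points in the wrong direction; the resolution is not dynamical. The point is purely metric--topological and rests on Lemma \ref{jordan_curve}: there is a constant $\delta_{\partial U}>0$ so that any subarc of $\partial U$ of diameter at most $\delta_{\partial U}$ is automatically the \emph{shorter} of the two complementary arcs. Once you tune the stopping threshold so that the image arc $g^{\circ n}(\alpha)$ has diameter sandwiched in a window $[\delta,\ \delta_{\partial U}]$ (which is possible because one extra application of $g$ multiplies diameters by at most a Lipschitz constant $M$), two things are simultaneously true: the image arc is still the shorter complementary arc, and its diameter is bounded below. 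Then the needed bound comes from a plain compactness argument: the set $E=\{(\xi,\eta)\in\partial U\times\partial U:\diam(L(\xi,\eta))\geq\delta\}$ is compact and avoids the diagonal, so the turning function attains a finite maximum $K_1$ there. Your proposal never introduces the analogue of $\delta_{\partial U}$, so nothing guarantees that the macroscopic image arc is still the shorter one --- and without that, the compactness bound applies to the wrong quantity and the argument does not close. Supplying Lemma \ref{jordan_curve} and the explicit compactness step on $E$, and folding the Lipschitz constant of $g$ near $J_f$ into the choice of $\delta$ so the window $[\delta,\ \min\{\rho,\delta_{\partial U}\}]$ is actually hit, completes the proof along the lines you set out.
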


\begin{proof}
	After iterating $f$ several times, we can assume that the periodic Fatou component $U$ is fixed by $f$. Without loss of generality, we suppose that $\infty\not\in \MJ_f$. Let $\epsilon=\delta_0>0$ be the number defined as in \eqref{delta0}. For any $x\in\partial U$, by Theorem \ref{mane lemma} and Lemma \ref{remk}, there exists an open neighborhood $U_x:=B(x,\delta_x)$ of $x$ satisfying (P1), (P2) and (P3). Since $\partial U$ is compact and $\partial U\subseteq\bigcup_{x\in\partial U}B(x,\delta_x/2)$, one can select a collection of finite number of elements $\mathcal{U}=\{B(x_1,\delta_{x_1}/2)$,$\cdots$,$B(x_N,\delta_{x_N}/2)\}$ such that $\partial U$ is covered by $\mathcal{U}$.
	Let $\delta_1>0$ be the Lebesgue number of $\mathcal{U}$. Then every subset of $\partial U$ with diameter $\leq\delta_1$ must be contained in at least one open disk $B(x_i,\delta_{x_i}/2)$ for some $1\leq i\leq N$.
	
	By Lemma \ref{local_homeomorphic}, the restriction of $f$ on $\partial U$ is a local homeomorphism. This means that there exists a number $\delta_2>0$ such that for any subset $E\subseteq\partial U$ with $\textup{diam}(E)\leq\delta_2$, the restriction of $f$ on $E$ is a homeomorphism. Recall that $\delta_{\partial U}>0$ is the number depending only on $\partial U$ which is defined in Lemma \ref{jordan_curve}. We define
	\begin{equation}\label{delta-choice}
	\delta:=\min\left\{\frac{\delta_1}{M},\delta_2,\frac{\delta_{\partial U}}{M}\right\},
	\end{equation}
	where $M:=1+\sup\{|f'(z)|:\textup{dist}(z,\MJ_f)\leq\delta_0\}<+\infty$. By (P2) in Theorem \ref{mane lemma}, there exists an integer $d_i>0$ for each $1\leq i\leq N$ such that for all $n\geq 0$ and every connected component $V$ of $f^{-n}(B(x_i,\delta_{x_i}))$, the degree of $f^{\circ n}:V\to B(x_i,\delta_{x_i})$ is $\leq d_i$. Define
	\begin{equation}\label{equ-max-d}
	d=\max_{1\leq i\leq N}d_i.
	\end{equation}
	
	Let $x,y$ be two different points in $\partial U$. We use $\gamma:=L(x,y)$ to denote one of the two components of $\partial U\setminus\{x,y\}$ with the smaller diameter. Now we divide the argument into two cases.
	
	\textbf{Case 1}: Suppose that $\textup{diam}(\gamma)\geq\delta$. Define $E:=\{(\xi,\eta)\in\partial U\times\partial U:\, \textup{diam}(L(\xi,\eta))\geq\delta\}$. Then $E$ is compact and $(\xi,\xi)\not\in E$. The function
	\begin{equation*}
	h: \partial U\times\partial U\to\R^+,  (\xi,\eta)\mapsto\frac{\textup{diam}(L(\xi,\eta))}{|\xi-\eta|}
	\end{equation*}
	is continuous on $E$. Then $h$ has a maximum $K_1$ on $E$ since $E$ is compact. In particular, the turning of $\gamma$ about $x$ and $y$ satisfies
	\begin{equation}\label{case_1}
	\Lambda(\gamma;x,y)=\frac{\textup{diam}(\gamma)}{|x-y|}\leq K_1.
	\end{equation}
	
	\textbf{Case 2}: Suppose that $\textup{diam}(\gamma)<\delta$.  Denote $\gamma_n:=f^{\circ n}(\gamma)$ for $n\geq 0$. Note that the forward orbit of $\gamma$ will eventually cover $\partial U$. There is a smallest integer $n\geq 0$ such that
	\begin{equation}\label{minimal_n}
	\textup{diam}(\gamma_n)<\delta\textup{~and~}\textup{diam}(\gamma_{n+1})=\diam(f(\gamma_n))\geq\delta.
	\end{equation}
	By the choice of $\delta$ in \eqref{delta-choice}, we know that $f^{\circ (n+1)}|_{\gamma}$ is a homeomorphism and so $\gamma_{n+1}$ is a Jordan arc connecting $f^{\circ(n+1)}(x)$ with $f^{\circ(n+1)}(y)$.
	Note that there exist two points $z_1,z_2\in\gamma_n$, such that
	\begin{equation}\label{estimate_length}
	\begin{split}
	\textup{diam}(\gamma_{n+1})
	&=  |f(z_1)-f(z_2)|\leq\int_{[z_1,z_2]}|f'(z)|\,|\textup{d}z|\\
	& \leq M|z_1-z_2|\leq M\,\textup{diam}(\gamma_n)\leq M\delta\leq\textup{min}\{\delta_1,\delta_{\partial U}\},
	\end{split}
	\end{equation}
	where $[z_1,z_2]$ is the straight segment connecting $z_1$ and $z_2$.
	
	By the definition of $\delta_{\partial U}$ and Lemma \ref{jordan_curve}, the Jordan arc $\gamma_{n+1}$ is one of the two components of $\partial U\setminus \{ f^{\circ (n+1)}(x), f^{\circ (n+1)}(y)\}$ with smaller diameter. Since $\diam(\gamma_{n+1})\geq\delta$ by \eqref{minimal_n}, as discussed in Case 1 above, we have
	\begin{equation}\label{gamma}
	\Lambda(\gamma_{n+1};f^{\circ(n+1)}(x),f^{\circ(n+1)}(y))\leq K_1.
	\end{equation}
	
	By the definition of $\delta_1$, there is a disk $B(x_i,\delta_{x_i}/2)$ such that $\gamma_{n+1}\subseteq B(x_i,\delta_{x_i}/2)$ for some $1\leq i\leq N$ since $\diam(\gamma_{n+1})\leq\delta_1$ by \eqref{estimate_length}. Let $B_{n+1}(x_i,\delta_{x_i}/2)$ and $B_{n+1}(x_i,\delta_{x_i})$, respectively, be the components of the preimages $f^{-(n+1)}(B(x_i$, $\delta_{x_i}/2))$ and $f^{-(n+1)}(B(x_i,\delta_{x_i}))$ both containing $\gamma$. Note that both of them are simply connected by the choice of $\delta_{x_i}$.
	Applying Lemma \ref{control turning} to the case $(U_1,V_1)$ $=$ $(B_{n+1}(x_i,\delta_{x_i}/2)$, $B_{n+1}(x_i,\delta_{x_i}))$, $(U_2,\,V_2)=(B(x_i,\delta_{x_i}/2),\,B(x_i,\delta_{x_i}))$ and $m=\frac{1}{2\pi}\textup{log}\,2$, together with \eqref{equ-max-d} and \eqref{gamma}, we have
	\begin{equation}\label{case_2}
	\Lambda(\gamma;x,y)\leq C_2(d,m)\Lambda(\gamma_{n+1};f^{\circ(n+1)}(x),f^{\circ(n+1)}(y))\leq C_2(d)K_1,
	\end{equation}
	where $C_2(d)$ is a constant depending only on $d$. Let
	\begin{equation*}
	K=K_1(1+C_2(d)).
	\end{equation*}
	Then $\Lambda(\gamma;x,y)\leq K$ holds for any different $x,y\in\partial U$ by \eqref{case_1} and \eqref{case_2}. The arbitrariness of $x$ and $y$ implies that $\partial U$ is a quasicircle.
\end{proof}

Now we need to determine when the boundaries of all the Fatou components are uniform quasicircles. According to Sullivan \cite{Sul85}, each Fatou component of a rational map is eventually periodic. It is natural to consider the pull back of the periodic Fatou components and then use some distortion lemmas to control the shape of pre-periodic Fatou components. To do this, it is necessary to construct a larger simply connected domain surrounding the periodic Fatou component such that all components of its preimages under the $n$-th iteration are still simply connected.

\begin{prop}[{Uniform quasicircles I}]\label{uniform_quasicircle-1}
	Let $f$ be a rational map whose Fatou components are Jordan domains. Suppose that all the boundaries of periodic Fatou components are disjoint from the $\omega$-limit sets of the critical points. Then the boundaries of all Fatou components of $f$ are uniform quasicircles.
\end{prop}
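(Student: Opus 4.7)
My plan is to combine Sullivan's non-wandering theorem with Lemma \ref{periodic uniform quasicircles} to handle periodic Fatou components, and then spread the uniformity to all iterated preimages via a modulus-controlled distortion argument built from Theorem \ref{mane lemma}, Lemma \ref{remk}, Lemma \ref{lem:KL} and Lemma \ref{control turning}.

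\textbf{Step 1 (periodic case).} By Sullivan's theorem every Fatou component is eventually periodic, and there are only finitely many cycles of periodic Fatou components (bounded, for instance, by the critical count). For each periodic $U$, the hypothesis guarantees $\partial U$ meets no $\omega(c)$ of any critical point $c$, and in particular no parabolic periodic point (each such point lies in $\omega(c)$ for some $c$). Lemma \ref{periodic uniform quasicircles} then makes $\partial U$ a quasicircle, and finiteness of the periodic class yields a uniform constant $K_0\geq 1$ with every periodic Fatou boundary a $K_0$-quasicircle.

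\textbf{Step 2 (collars and pullback modulus control).} For each periodic Fatou component $U$, I will choose a Jordan disk $V\supset\overline{U}$ enjoying: (a) $\textup{mod}(V\setminus\overline{U})\geq m_0>0$; (b) $V\setminus\overline{U}$ is disjoint from $\omega(c)$ for every critical $c$; (c) for every $n\geq 0$, each component $V'$ of $f^{-n}(V)$ is a Jordan disk containing a unique component $U'$ of $f^{-n}(U)$, so that $V'\setminus\overline{U'}$ is an annulus; (d) the moduli $\textup{mod}(V'\setminus\overline{U'})$ have a uniform lower bound $m>0$. Property (c) comes from covering the compact set $\overline{V}\setminus U$ by finitely many Ma\~n\'e neighborhoods and invoking Lemma \ref{remk} (the hypothesis rules out the parabolic/recurrent-critical obstructions), so every pullback is simply connected. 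For (d), Lemma \ref{lem:KL} gives $\textup{mod}(V'\setminus\overline{U'})\geq m_0/d_n$, with $d_n=\deg(f^{\circ n}|_{V'})$; combining (b) (which forces all but finitely many critical-orbit visits to miss the collar $V\setminus\overline{U}$) with Theorem \ref{mane lemma}\,(C2) produces a uniform upper bound $D$ on every $d_n$, and then $m:=m_0/D$ works.

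\textbf{Step 3 (distortion).} Let $W$ be any preperiodic Fatou component and let $n\geq 0$ be minimal with $f^{\circ n}(W)=U$ periodic; let $V'$ be the component of $f^{-n}(V)$ containing $W$. By Step 2, $\textup{mod}(V'\setminus\overline{W})\geq m$, $\partial U$ is a $K_0$-quasicircle, and $f^{\circ n}:V'\to V$ is proper holomorphic of degree at most $D$ with $f^{\circ n}(W)=U$. I would then run the proof of Lemma \ref{pullback of quasicircle} verbatim, but replacing every appeal to the conformal distortion Lemma \ref{lema-distor-1} by the proper-holomorphic turning bound Lemma \ref{control turning}\,(2). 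The usual two-case split (whether $f^{\circ n}$ swaps which of the two complementary arcs of $\partial W$ has the smaller-diameter image) carries over unchanged, and yields that $\partial W$ is a $K$-quasicircle with $K=K(K_0,D,m)$ independent of $W$.

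\textbf{The main obstacle} is Step 2(d). A naive Riemann--Hurwitz count would allow the wrapping degrees $d_n$ to grow with $n$, degenerating the pullback modulus $m_0/d_n$. The hypothesis that $\partial U$ avoids $\omega(c)$ for every critical $c$ is the essential ingredient used to confine the critical orbits away from the collar $V\setminus\overline{U}$, so that Ma\~n\'e's bounded-degree clause delivers a bound $D$ that is genuinely uniform in $n$; the same hypothesis (through Lemma \ref{remk}) is what ensures the pullbacks are simply connected in the first place, so that $V'\setminus\overline{U'}$ is a bona fide annulus rather than a higher-genus planar region.
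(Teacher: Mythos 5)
Your plan is in the right spirit, but it diverges from the paper's argument in a way that creates real gaps; the paper's route is both cleaner and correct where yours is shaky.

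The paper's key move, which you miss, is the choice of base family: it takes $\mathcal{U}'$ to consist of \emph{both} the periodic Fatou components \emph{and} the critical Fatou components, and then sets $\mathcal{U}=O^+(\mathcal{U}')$, which is still finite. Because $\omega$-limit sets are forward invariant and avoid periodic Fatou boundaries by hypothesis, every $U_i\in\mathcal{U}$ has $\partial U_i$ disjoint from $\bigcup_c\omega(c)$, so collars $V_i\supset\overline{U}_i$ can be chosen with $V_i\setminus\overline{U}_i$ disjoint from the postcritical set (the $\omega$-limit hypothesis confines all but finitely many postcritical points away from the collar, and a final shrink removes the finitely many stragglers). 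For any $U\notin\mathcal{U}$ the intermediate components $U,f(U),\ldots,f^{\circ(k-1)}(U)$ are all non-critical by construction, so the pullback $f^{\circ k}:V\to V_i$ is \emph{conformal}, and $\partial U$ is handled by the purely conformal distortion result, Lemma \ref{pullback of quasicircle}. This sidesteps every issue you have to wrestle with in Steps 2 and 3.

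Your Step 2 has two concrete gaps. First, in (c) you invoke Lemma \ref{remk} plus a Ma\~n\'e cover of $\overline{V}\setminus U$ to conclude that every component of $f^{-n}(V)$ is a Jordan disk with a unique pullback of $U$ inside it. That inference is not valid: Lemma \ref{remk} gives simply connected pullbacks only for the small Ma\~n\'e disks themselves, and it is simply false that a component of $f^{-n}(V)$ contains a unique component of $f^{-n}(U)$ when the degree exceeds one. What is true, and what the paper does in the analogous place (Proposition \ref{separated}), is that $V\setminus\overline{U}$ avoids the postcritical set, so every component of $f^{-n}(V\setminus\overline{U})$ is an \emph{unbranched} cover, hence an annulus, and one should take $V'$ to be $\overline{W}$ union that specific annulus rather than a component of $f^{-n}(V)$. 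Second, in (d) you attribute the uniform degree bound to Ma\~n\'e's condition (C2). That bound pertains to pullbacks of small Ma\~n\'e disks, and does not control the degree over the big collar: the preimage of a small disk inside the pulled-back collar can have many components each of Ma\~n\'e-bounded degree, so the total can be large. The correct (and elementary) source of the bound is again the one used in Proposition \ref{separated}: the components $W,f(W),\ldots,f^{\circ(n-1)}(W)$ are pairwise distinct, $f$ has finitely many critical points, so $\deg(f^{\circ n}|_W)\leq\prod_c\deg_c f$.

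Step 3 also has a gap you should not gloss over. Lemma \ref{control turning}\,(2) is \emph{not} a drop-in substitute for Lemma \ref{lema-distor-1} in the proof of Lemma \ref{pullback of quasicircle}: it requires the compact set $E$ to lie inside the \emph{open} inner disk $U_1$, whereas the arcs you need to control live on $\partial W=\partial U_1$. The paper, when it does use Lemma \ref{control turning}\,(2) (in Proposition \ref{uniform_quasicircle}), arranges for the Fatou component $U$ to have $\overline{U}$ contained in the interior of a Ma\~n\'e disk pullback $B_U\Subset B_U'$, so $E\subset\partial U\subset B_U$ is strictly interior. To make your Step 3 rigorous you would need an analogous intermediate disk $B$ with $\overline{W}\subset B\Subset V'$ whose image satisfies a definite annulus condition, and you would need to justify that extra modulus bound. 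None of this is needed once you follow the paper and make the pullback conformal.
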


\begin{proof}
	If all periodic Fatou components of $f$ are disjoint from the $\omega$-limit sets of the critical points, then $f$ has no parabolic periodic points (see \cite[Theorem 10.15]{Mil06}). By Lemma \ref{periodic uniform quasicircles} and Sullivan's eventually periodic theorem, all the boundaries of the Fatou components of $f$ are quasicircles. We only need to prove that they are uniform quasicircles.
	
	Let $\mathcal{U}'$ be the collection of all the Fatou components such that each of them is either a critical Fatou component (contains at least one critical point) or a periodic Fatou component. We use $\mathcal{U}:=\MO^+(\mathcal{U}')=\{U_1,\cdots,U_n\}$ to denote the union of the forward orbits of all the Fatou components in $\mathcal{U}'$. Note that the number of Fatou components in $\mathcal{U}$ is finite since $\mathcal{U}'$ is. Therefore, there exists a constant $K'>1$ such that $\partial U_i$ is a $K'$-quasicircle for all $1\leq i\leq n$. For each $1\leq i\leq n$, let $V_i$ be a Jordan disk such that $V_i\setminus\overline{U}_i$ is an annulus which is disjoint from the forward orbits of the critical points.
	
	Let $m_i=\textup{mod}(V_i\setminus\overline{U}_i)>0$ for $1\leq i\leq n$. For each Fatou component $U\not\in\mathcal{U}$, there exists a minimal number $k\geq 1$ such that $f^{\circ k}(U)=U_i\in\mathcal{U}$ for some $i$. Let $V$ be the component of $f^{-k}(V_i)$ containing $U$. Then $f^{\circ k}:V\to V_i$ is a conformal map and $V$ is a Jordan disk since $V_i$ contains no points in the critical orbits. By Corollary \ref{pullback of quasicircle}, the boundary $\partial U$ is a $C(K',m_i)$-quasicircle, where $C(K',m_i)$ is a constant depending only on $K'$ and $m_i$.
	
	Let $K=\max_{1\leq i\leq n}C(K',m_i)$. Then the boundary of each Fatou component of $f$ is a $K$-quasicircle. By the arbitrariness of $U$, this means that the boundaries of the Fatou components of $f$ are uniform quasicircles.
\end{proof}

By the assumption in Proposition \ref{uniform_quasicircle-1}, the periodic Fatou components can only be attracting or super-attracting. Recall that a rational map $f$ is called \emph{semi-hyperbolic} if the Julia set $\MJ_f$ contains neither parabolic periodic points nor recurrent critical points.

\begin{prop}[{Uniform quasicircles II}]\label{uniform_quasicircle}
	Let $f$ be a \textup{semi-hyperbolic} rational map such that the boundary of each Fatou component is a Jordan curve. Then the boundaries of all the Fatou components of $f$ are uniform quasicircles.
\end{prop}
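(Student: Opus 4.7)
The plan is to mirror the proof of Proposition \ref{uniform_quasicircle-1}, replacing the conformal pullbacks there --- which were only available because the annular collars $V_i\setminus\overline{U}_i$ avoided every $\omega$-limit set of a critical point --- by proper pullbacks of uniformly bounded degree supplied by Ma\~n\'e's Theorem \ref{mane lemma} together with Lemma \ref{remk}. The quasicircle property will then be transported through such bounded-degree pullbacks using Lemma \ref{control turning} in place of Lemma \ref{pullback of quasicircle}. The semi-hyperbolic hypothesis ensures that $J_f$ contains no parabolic periodic points and no recurrent critical points, so the hypotheses of Theorem \ref{mane lemma} and Lemma \ref{remk} are satisfied at every point of $J_f$.

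First, by Lemma \ref{periodic uniform quasicircles} every periodic Fatou component has a quasicircle boundary, and since there are only finitely many periodic Fatou cycles these boundaries are uniformly $K_0$-quasicircles. Define $\mc{U}'$ and $\mc{U}:=O^+(\mc{U}')=\{U_1,\ldots,U_n\}$ exactly as in the proof of Proposition \ref{uniform_quasicircle-1}, so that $\mc{U}$ is a finite set of Jordan-disk Fatou components. For each $U_i\in\mc{U}$ I would cover the compact set $\partial U_i$ by finitely many Ma\~n\'e disks $\{\D(x_{i,j},\delta_{i,j}/2)\}_{j=1}^{N_i}$ furnished by Theorem \ref{mane lemma} and Lemma \ref{remk}, and then select a Jordan disk $V_i\supset\overline{U}_i$ with $V_i\setminus\overline{U}_i\subset\bigcup_j\D(x_{i,j},\delta_{i,j}/2)$ and $\tu{mod}(V_i\setminus\overline{U}_i)\geq m_i>0$. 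The Ma\~n\'e conditions (C1), (C2) and (C3) should then yield a uniform integer $d_i\geq 1$ such that, for every $k\geq 0$, every component $W$ of $f^{-k}(V_i)$ is simply connected and the restriction $f^{\circ k}:W\to V_i$ is a proper holomorphic map of degree at most $d_i$. For an arbitrary Fatou component $U$ with $U\not\in\mc{U}$, take the minimal $k\geq 1$ with $f^{\circ k}(U)=U_i\in\mc{U}$ and let $V$ be the component of $f^{-k}(V_i)$ containing $\overline{U}$; then $V$ is a Jordan disk, Lemma \ref{lem:KL} gives $\tu{mod}(V\setminus\overline{U})\geq m_i/d_i$, and Lemma \ref{control turning} applied to $f^{\circ k}:V\to V_i$ together with the quasicircle $\partial U_i$ (in the same fashion as Lemma \ref{pullback of quasicircle} was used in Proposition \ref{uniform_quasicircle-1}) shows that $\partial U$ is a quasicircle whose constant depends only on the quasicircle constant of $\partial U_i$, on $m_i$ and on $d_i$. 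Since $\{\partial U_1,\ldots,\partial U_n\}$ is a finite collection of quasicircles, maximizing these constants over $i=1,\ldots,n$ produces a single $K$ that bounds the quasicircle constants of the boundaries of all Fatou components.

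The main obstacle is constructing the Ma\~n\'e collar $V_i$ and verifying the accompanying uniform degree bound $d_i$: Theorem \ref{mane lemma} and Lemma \ref{remk} supply bounded-degree, simply-connected pullbacks only for neighborhoods of a single point, whereas $V_i$ is a global neighborhood of the whole boundary $\partial U_i$. The simple-connectedness of each preimage component $W$ of $V_i$ should follow by gluing: each preimage of a single Ma\~n\'e disk is simply connected by (C3), each preimage Fatou component of $U_i$ is a Jordan disk, and these components glue along common boundary arcs to yield a Jordan disk $W$. The uniform degree bound is subtler and requires counting a generic fibre of $f^{\circ k}:W\to V_i$: preimages sitting over the Fatou part $U_i$ lie in finitely many Fatou-component preimages of $U_i$ whose individual degrees are controlled by the minimality of $k$ (which rules out critical Fatou components along the intermediate iterates $f(U),\ldots,f^{\circ(k-1)}(U)$), while preimages lying over the collar sit inside preimage components of single Ma\~n\'e disks, of degree at most $\max_{j}d_{x_{i,j}}$ by (C2). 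Once this bookkeeping is in place, the transport of the quasicircle property through the bounded-degree pullback via Lemma \ref{control turning} is a direct proper-map analogue of the final step of the proof of Proposition \ref{uniform_quasicircle-1}.
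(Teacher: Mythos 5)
Your route is genuinely different from the paper's: you try to build, for each of the finitely many Fatou components $U_i\in\mathcal{U}$, a global Ma\~n\'e collar $V_i\supset\overline{U}_i$, and then to argue that every component $W$ of $f^{-k}(V_i)$ is simply connected and that $f^{\circ k}:W\to V_i$ is proper of uniformly bounded degree. You flag this as the main obstacle, but neither of your two sketches closes it. The gluing argument for simple connectedness does not hold up: a chain of simply connected pieces (Ma\~n\'e-disk preimages and Fatou-component preimages) meeting along boundary arcs can perfectly well close up into an annulus, and the trick from the proof of Lemma \ref{remk} of trapping a preimage of $\infty$ in the hole is unavailable here because $\diam(W)$ is not controlled by (C1) --- $W$ already contains a whole Fatou component whose diameter you have not bounded. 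The degree count is also not justified: minimality of $k$ constrains only the single orbit $U,f(U),\ldots,f^{\circ(k-1)}(U)$, whereas the generic fibre of $f^{\circ k}|_{W}$ is spread over potentially many Fatou-component preimages of $U_i$ and many preimage components of single Ma\~n\'e disks inside $W$; condition (C2) bounds the degree on each such piece but nothing in your argument bounds how many pieces $W$ contains. (Indeed, if $\omega(c)$ meets $\partial U_i$ for a critical point $c\in J_f$ --- which is allowed in the semi-hyperbolic case --- the collar $V_i\setminus\overline{U}_i$ necessarily picks up critical values of $f^{\circ k}$ for arbitrarily large $k$, so the branching inside $W$ is genuinely spread out.)

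The paper avoids building a collar altogether, and the key fact you omit is that a semi-hyperbolic map has a locally connected Julia set (\cite[Theorem 1.2]{Yin}), so for any $\epsilon>0$ only finitely many Fatou components have diameter $\geq\epsilon$. The paper covers all of $J_f$ (not just the boundaries $\partial U_i$) by finitely many Ma\~n\'e disks, lets $\delta$ be the Lebesgue number, and collects into a finite exceptional family $\mathcal{F}_1$ the critical, the periodic, and the ``large'' ($\diam\geq\delta$) Fatou components together with their forward orbits and one extra preimage. For every remaining $U$ there is an $n_U\geq 1$ such that $f^{\circ n_U}|_U$ is conformal, $f^{\circ n_U}(U)\in\mathcal{F}_1$, and $\diam(f^{\circ n_U}(U))<\delta$, so $f^{\circ n_U}(U)$ fits inside a single Ma\~n\'e disk $\D(x_i,\delta_{x_i}/2)$. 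The pullbacks $B_U\Subset B_U'$ of $\D(x_i,\delta_{x_i}/2)\Subset\D(x_i,\delta_{x_i})$ containing $U$ are then simply connected of degree $\leq d_i$ directly by (C2) and (C3), and Lemma \ref{control turning} transports the quasicircle constant. Because the entire estimate lives inside a single Ma\~n\'e disk, the properties you were trying to establish for the global collar become automatic, and the gap in your proposal disappears.
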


It seems that the condition in Proposition \ref{uniform_quasicircle-1} is stronger than that in Proposition \ref{uniform_quasicircle}. However, this is not the case. Actually, one can construct a rational map with a recurrent critical point, whose $\omega$-limit set is disjoint from the boundaries of any Fatou components, using similar method as stated in \S\ref{example}.

\begin{proof}[{Proof of Proposition \ref{uniform_quasicircle}}]
	By Lemma \ref{periodic uniform quasicircles} and Sullivan's eventually periodic theorem, it follows that all the boundaries of the Fatou components of $f$ are quasicircles since $f$ is semi-hyperbolic. We only need to prove that they are uniform quasicircles. According to \cite[Theorem 1.2]{Yin99}, the Julia set $\MJ_f$ is locally connected. Then for any $\epsilon>0$, there are only finitely many Fatou components with diameter $\geq\epsilon$ \cite[Lemma 19.5]{Mil06}.
	
	Without loss of generality, we suppose that $\infty\not\in \MJ_f$. Let $\epsilon=\delta_0>0$ be the number defined as in \eqref{delta0}. For any $x\in \MJ_f$, by Theorem \ref{mane lemma} and Lemma \ref{remk}, there exists an open neighborhood $U_x:=B(x,\delta_x)$ of $x$ satisfying (P1), (P2) and (P3). Since $\MJ_f$ is compact, there exists a collection of finite number of elements $\mathcal{U}=\{B(x_1,\delta_{x_1}/2)$,$\cdots$,$B(x_N,\delta_{x_N}/2)\}$ such that $\MJ_f$ is covered by $\mathcal{U}$. We use $\delta>0$ to denote the Lebesgue number of $\mathcal{U}$. Then every subset of $\MJ_f$ with diameter $\leq\delta$ must be contained in at least one open disk $B(x_i,\delta_{x_i}/2)$ for some $1\leq i\leq N$.
	
	We divide the collection of all the Fatou components $\mathcal{F}$ of $f$ into two classes as following. Let $\mathcal{F}_0$ be the collection of all the Fatou components such that each $U\in\mathcal{F}_0$ is one of the following cases: either (i) $U$ contains at least one critical point; or (ii) $U$ is periodic; or (iii) $\diam(U)\geq \delta$. Let $\mathcal{F}_1':=\MO^+ (\mathcal{F}_0)$ be the set of the union of the forward orbits of all the Fatou components in $\mathcal{F}_0$. Define $\mathcal{F}_1:=\mathcal{F}_1'\cup f^{-1}(\mathcal{F}_1')$. By Sullivan's eventually periodic theorem, the number of Fatou components in $\mathcal{F}_1$ is finite since $\mathcal{F}_0$ is also. Therefore, there exists a constant $K'>1$ such that each Fatou component in $\mathcal{F}_1$ is a $K'$-quasicircle.
	
	For any Fatou component $U\in\mathcal{F}\setminus\mathcal{F}_1$, we have $\diam(U)<\delta$. There exists a minimal integer $n_U\geq 1$ such that $f^{\circ n_U}(U)\in f^{-1}(\mathcal{F}_1')\setminus \mathcal{F}_1' \subseteq \mathcal{F}_1$ and $\diam (f^{\circ n_U}(U))<\delta$. Moreover, the map $f^{\circ n_U}: U\to f^{\circ n_U}(U)$ is conformal. By the definition of $\delta$, there exists some disk $B(x_i,\delta_{x_i}/2)$ in $\mathcal{U}$ such that $f^{\circ n_U}(U)\subseteq B(x_i,\delta_{x_i}/2)$.
	We use $B_U$ and $B_U'$, respectively, to denote the components of $f^{-n_U}(B(x_i,\delta_{x_i}/2))$ and $f^{-n_U}(B(x_i,\delta_{x_i}))$ both containing $U$.
	
	Let $x,y\in \partial U$ be two different points and $\gamma_1$, $\gamma_2$ the two different components of $\partial U\setminus\{x,y\}$. Then $f^{\circ n_U}(\gamma_1)$ and $f^{\circ n_U}(\gamma_2)$ are both Jordan arcs connecting $f^{\circ n_U}(x)$ with $f^{\circ n_U}(y)$. Applying Lemma \ref{control turning}(2) to the case $(U_1,V_1)=(B_U,B_U')$, $(U_2,V_2)=(B(x_i,\delta_{x_i}/2)$, $B(x_i,\delta_{x_i}))$, $m=\frac{1}{2\pi}\log 2$, $g=f^{\circ n_U}$ and $E=\gamma_j$, where $j=1,2$, we have
	\begin{equation*}
	\Lambda(\gamma_j;x,y)\leq C_2(d_i)\,\Lambda(f^{\circ n_U}(\gamma_j);f^{\circ n_U}(x),f^{\circ n_U}(y)),
	\end{equation*}
	where $C_2(d_i)$ is a constant depending only on $d_i$ and $d_i>0$ is the number appeared in Theorem \ref{mane lemma} which depends on $x_i$. Then
	\begin{equation*}
	\min_{j\in\{1,2\}}\{\Lambda(\gamma_j;x,y)\}\leq C_2(d_i)\,\min_{j\in\{1,2\}}\{\Lambda(f^{\circ n_U}(\gamma_j);f^{\circ n_U}(x),f^{\circ n_U}(y))\} \leq C_2(d_i)K'.
	\end{equation*}
	Let $K=\max_{1\leq i\leq N}C_2(d_i)K'$. Then $\partial U$ is a $K$-quasicircle by the arbitrariness of $x$ and $y$. By the arbitrariness of $U$, we know that each Fatou component of $f$ is a $K$-quasicircle and $K$ is a constant depending only on $f$.
\end{proof}

Note that in Propositions \ref{uniform_quasicircle-1} and \ref{uniform_quasicircle}, we do not require that the closures of the Fatou components are mutually disjoint. This means that these two propositions can be applied also to those rational maps whose Julia sets are not Sierpi\'{n}ski carpets.

\subsection{Sufficiency for the property of uniformly relatively separated}

By Lemma \ref{modulus and separated}, if the lower bound of the annuli between the boundaries of the Fatou components can be controlled, then one can prove that the peripheral circles of the carpet Julia set are uniformly relatively separated.

\begin{prop}[{Uniformly relatively separated}]\label{separated}
	Let $f$ be a rational map whose Julia set $\MJ_f$ is a Sierpi\'{n}ski carpet. If all the boundaries of periodic Fatou components are disjoint from the $\omega$-limit sets of the critical points in $\MJ_f$, then the boundaries of Fatou components are uniformly relatively separated.
\end{prop}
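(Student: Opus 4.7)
The plan is to apply Lemma~\ref{modulus and separated}, reducing the statement to a uniform lower bound $\textup{mod}(\EC\setminus(\overline{U}_\alpha\cup\overline{U}_\beta))\ge m_0>0$ over all pairs of distinct Fatou components $U_\alpha, U_\beta$ of $f$. A preliminary observation is that, since $\omega$-limit sets are $f$-forward invariant and every Fatou component is eventually periodic by Sullivan's theorem, the hypothesis on periodic Fatou components forces $\partial U\cap\omega(c)=\emptyset$ for \emph{every} Fatou component $U$ and every critical point $c$, not only the periodic ones.

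Paralleling the proof of Proposition~\ref{uniform_quasicircle-1}, let $\mathcal{U}:=O^{+}(\mathcal{U}')=\{U_{1},\ldots,U_{n}\}$, where $\mathcal{U}'$ is the finite set of Fatou components that are either periodic or contain a critical point. For each $U_{i}\in\mathcal{U}$ choose a Jordan disk $V_{i}\Supset U_{i}$ with $\textup{mod}(V_{i}\setminus\overline{U}_{i})\ge m_{\ast}>0$, such that $V_{i}\setminus\overline{U}_{i}$ is disjoint from $\omega(c)$ for every critical point $c$ and $V_{i}\cap\overline{U}_{j}=\emptyset$ for every $j\ne i$; this is possible by compactness of the $\omega$-limit sets and the pairwise disjointness of closures of Fatou components in a carpet. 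Since the $\omega$-limit condition forces only finitely many critical orbit points to lie in each $V_{i}\setminus\overline{U}_{i}$, a further slight shrinking arranges that $V_{i}\setminus\overline{U}_{i}$ contains no critical orbit point at all. For any Fatou component $U\notin\mathcal{U}$, let $k_{U}\ge 1$ be the smallest integer with $f^{\circ k_{U}}(U)=U_{i(U)}\in\mathcal{U}$; by minimality $f^{\circ k_{U}}|_{U}$ is univalent. Defining $V(U)$ as the connected component of $f^{-k_{U}}(V_{i(U)})$ containing $U$ (and $V(U_i):=V_i$ for $U_i\in\mathcal{U}$), the absence of critical values of $f^{\circ k_{U}}$ in $V_{i(U)}\setminus\overline{U}_{i(U)}$ makes $f^{\circ k_{U}}\colon V(U)\to V_{i(U)}$ a conformal isomorphism, so $\textup{mod}(V(U)\setminus\overline{U})\ge m_{\ast}$.

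Given distinct Fatou components $U_\alpha, U_\beta$, if $V(U_\alpha)\cap\overline{U}_\beta=\emptyset$ (or symmetrically) then $V(U_\alpha)\setminus\overline{U}_\alpha$ embeds as an essential sub-annulus of $\EC\setminus(\overline{U}_\alpha\cup\overline{U}_\beta)$, so the modulus of the latter is $\ge m_{\ast}$ by monotonicity, and Lemma~\ref{modulus and separated} yields $\Delta(\partial U_\alpha,\partial U_\beta)\ge C(m_\ast)>0$. Otherwise both $\overline{U}_\beta\subset V(U_\alpha)$ and $\overline{U}_\alpha\subset V(U_\beta)$; a short check using the injectivity of each conformal map $f^{\circ k_{U_i}}$ on $V(U_i)$ together with the disjointness of each $V_i$ from $\overline{U}_j$ ($j\ne i$) shows that this symmetric containment can occur only when both $U_\alpha, U_\beta\notin\mathcal{U}$. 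Assuming WLOG $k_{U_\alpha}\le k_{U_\beta}$, the conformal isomorphism $f^{\circ k_{U_\alpha}}\colon V(U_\alpha)\to V_{i(U_\alpha)}$ transports the pair $(U_\alpha,U_\beta)$ to a distinct pair $(U_{i(U_\alpha)}, U_\beta')$ inside $V_{i(U_\alpha)}$ with $k_{U_\beta'}=k_{U_\beta}-k_{U_\alpha}<k_{U_\beta}$, and by Lemma~\ref{lema-distor-1} the relative distance $\Delta(\partial U_\alpha,\partial U_\beta)$ is comparable to $\Delta(\partial U_{i(U_\alpha)}, \partial U_\beta')$ up to a multiplicative constant depending only on $m_{\ast}$. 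Inducting on $k_{U_\beta}$ terminates in the finite collection of pairs in $\mathcal{U}\times\mathcal{U}$, where a uniform lower bound on $\Delta$ holds by finiteness.

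The main obstacle is this last contained case: one must verify carefully that the symmetric containment forces both Fatou components outside $\mathcal{U}$ (using the injectivity of the conformal reductions and the disjointness design of the base collars), and then propagate the relative-distance estimate through the finitely many iterated conformal reductions until landing in the base case, ensuring that the accumulated distortion constant remains controlled by $m_{\ast}$ alone.
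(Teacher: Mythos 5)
Your preliminary observation (that forward invariance of $\omega$-limit sets together with Sullivan's theorem upgrades the hypothesis to all Fatou components) and your reduction via Lemma~\ref{modulus and separated} are both correct and match the spirit of the paper. However, the heart of the argument contains two genuine gaps, only the second of which you acknowledge.

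The first gap is a false dichotomy. You split into ``Case A: $V(U_\alpha)\cap\overline{U}_\beta=\emptyset$ (or symmetrically)'' and ``Case B: otherwise both $\overline{U}_\beta\subset V(U_\alpha)$ and $\overline{U}_\alpha\subset V(U_\beta)$.'' But the negation of Case A is merely that $V(U_\alpha)\cap\overline{U}_\beta\neq\emptyset$ \emph{and} $V(U_\beta)\cap\overline{U}_\alpha\neq\emptyset$; this does not rule out that $\overline{U}_\beta$ crosses $\partial V(U_\alpha)$ without being contained in it. Your disjointness constraint $V_i\cap\overline{U}_j=\emptyset$ applies only to $U_j\in\mathcal{U}$, and $\partial V_i$ (hence $\partial V(U)$ after pullback) will generically cut through infinitely many of the remaining Fatou components, so partial overlap really is possible and is not addressed. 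The second gap (which you do flag) is that even in the genuine full-containment situation, Lemma~\ref{lema-distor-1} requires a lower bound on $\textup{mod}(V_{i(U_\alpha)}\setminus\overline{W})$ for a disk $W$ enclosing both $\overline{U}_{i(U_\alpha)}$ and $\overline{U}_\beta'$; since $U_\beta'$ can be located arbitrarily close to $\partial V_{i(U_\alpha)}$, this modulus has no uniform lower bound and the resulting distortion constant is not ``controlled by $m_\ast$ alone.'' No finite induction on $k_{U_\beta}$ can repair this, because the constant already fails to be uniform at the first reduction step.

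The paper's proof sidesteps both problems by a structurally different and simpler route. After iterating so that the periodic components $X_1,\dots,X_n$ are fixed, it chooses collars $H_i=Y_i\setminus\overline{X}_i$ free of critical orbit points, with the $Y_i$ pairwise disjoint, and for \emph{arbitrary} $U_1,U_2$ pulls back $H_{k_i}$ through the full time $n_i$ to a fixed component. The pullbacks need not be conformal, but Lemma~\ref{lem:KL} together with the uniform degree bound $N$ (coming from finiteness of critical points) yields $\textup{mod}(V_i\setminus\overline{U}_i)\geq m/N$ directly, with no distortion lemma. Crucially, disjointness is then \emph{proved} rather than assumed: if $n_1=n_2$ the domains $V_1,V_2$ are distinct components of the same preimage, hence disjoint; if $n_1>n_2$, intersecting $V_1\setminus\overline{U}_1$ with $U_2$ and pushing forward $n_1$ times would force $H_{k_1}$ to meet the fixed component $X_{k_2}$, a contradiction. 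You would need an analogous pushforward/invariance argument to close the dichotomy gap, but your base collection $\mathcal{U}$ (periodic \emph{and} critical components) is not forward-fixed in the way the paper's $\{X_i\}$ are, so the paper's argument does not transport directly to your setup. I recommend redoing the construction with fixed components as the base, as the paper does.
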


\begin{proof}
	Let $\mathcal{U}=\{X_1,\cdots,X_n\}$ be the collection of the all the periodic Fatou components of $f$. After iterating $f$ by some times if necessary, we can assume that each $X_i$ have period precise one. For $1\leq i\leq n$, let $Y_i$ be a simply connected domain containing $X_i$ such that $Y_1,\cdots,Y_n$ are mutually disjoint and each annulus $A_i:=Y_i\setminus \overline{X}_i$ contains no points in the critical orbits. Denote $m:=\min_{1\leq i\leq n}\textup{mod}(A_i)>0$.
	
	For any two different Fatou components $U_1$ and $U_2$, there exist two minimal numbers $n_1,n_2\geq 0$ such that $f^{\circ n_1}(U_1),f^{\circ n_2}(U_2)\in\mathcal{U}$. In particular, there exist two integers $1\leq k_1,k_2\leq n$ such that $f^{\circ n_1}(U_1)=X_{k_1}$ and $f^{\circ n_2}(U_2)=X_{k_2}$. Since the annulus $A_{k_i}$ contains no critical values of $f^{\circ n_i}$ for $i\in\{1,2\}$, the restriction of $f^{\circ n_i}$ on each component of $f^{-n_i}(A_{k_i})$ is an unbranched covering. By Riemann-Hurwitz's formula, it follows that each component of their preimages is still an annulus. Therefore, there exist two simply connected domains $V_1$ and $V_2$ surrounding $U_1$ and $U_2$ respectively, such that $V_i\setminus\overline{U}_i$ is a component of $f^{-n_i}(A_{n_i})$ and $\textup{deg}(f^{\circ n_i}:{V_i\to Y_{n_i}})=\textup{deg}(f^{\circ n_i}:U_i\to X_{n_i})$. Note that $f^{\circ j_1}(U_i)\cap f^{\circ j_2}(U_i)=\emptyset$ for $0\leq j_1<j_2\leq n_i$ (if $n_i\geq 1$) and $f$ has only finitely many critical points. So the degree of $f^{\circ n_i}|_{U_i}$ is bounded by some number $N\geq 1$ depending only on $f$. Denote by $A$ the annulus bounded by $\partial U_1$ and $\partial U_2$ in $\EC$.
	We now divide the arguments into two cases.
	
	\textbf{Case 1}: Suppose that $n_1=n_2$. Then $V_1$ and $V_2$ are two disjoint components of $f^{-n_1}(Y_{k_1}\cup Y_{k_2})$. By Lemma \ref{lem:KL09}, we have
	\begin{equation*}
	\textup{mod}(A)\geq\textup{mod}(V_1\setminus\overline{U}_1)+\textup{mod}(V_2\setminus\overline{U}_2)\geq
	\textup{mod}(A_{k_1})/N+\textup{mod}(A_{k_2})/N\geq 2m/N.
	\end{equation*}
	
	\textbf{Case 2}: Suppose that $n_1>n_2$. We claim that $V_1$ and $U_2$ are disjoint. Otherwise, the annulus $V_{1}\setminus\overline{U}_1$ intersects $U_2$ and so $f^{\circ n_2}(V_{1}\setminus\overline{U}_1)$ intersects the fixed Fatou component $X_{k_2}$. Then $A_{k_1}=f^{\circ (n_1-n_2)}(f^{\circ n_2}(V_1\setminus\overline{U}_1))$ joints with $X_{k_2}$, which contradicts with the choice of $A_{k_1}$. Thus we have
	\begin{equation*}
	\textup{mod}(A)\geq \textup{mod}(V_1\setminus\overline{U}_1)\geq {m/N}.
	\end{equation*}
	
	Above all, the annulus $A$ has modulus not less than $m/N$. By Lemma \ref{modulus and separated}, $U_1$ and $U_2$ are relatively separated with the relative distance $\Delta(\partial U_1,\partial U_2)$ depending only on $m$ and $N$. By the arbitrariness of $U_1$ and $U_2$, the peripheral circles of the carpet Julia set are uniformly relatively separated.
\end{proof}

Note that the condition in Proposition \ref{separated} does not exclude the existence of parabolic points on the Julia set.

\vskip0.2cm
\noindent{\textit{Proof of Theorem \ref{thm-main-1}}}. By Propositions \ref{uniform_quasicircle-1} and \ref{separated}, the peripheral circles of carpet $\MJ_f$ are uniform quasicircles and uniformly relatively separated. According to Bonk \cite[Corollary 1.2]{Bon11}, $\MJ_f$ is quasisymmetrically equivalent to a round carpet.
\hfill $\square$

\subsection{The property of non-uniformly relatively separated}

If the peripheral circles of a carpet Julia set are uniformly relatively separated, a natural question is whether it implies that all the boundaries of pre-periodic Fatou components avoid the accumulation points of the critical orbits in the Julia set. We give the answer in the following proposition.

\begin{prop}[{Non-uniformly relatively separated}]\label{condi-necessary}
	Let $f$ be a semi-hyperbolic rational map whose Julia set is a Sierpi\'nski carpet. Suppose that there exists a Fatou component $U$ of $f$ such that $\partial U\cap\omega(c)\neq\emptyset$ for some critical point $c\in \MJ_f$. Then the boundaries of Fatou components of $f$ are not uniformly relatively separated.
\end{prop}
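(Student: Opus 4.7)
The strategy is a contradiction argument: assuming the peripheral circles of $J_f$ are uniformly relatively separated, I will combine the branching of $f^{\circ k_n}$ at $c$ with the uniform quasicircle bound of Proposition~\ref{uniform_quasicircle} to produce two distinct Fatou components whose relative distances tend to $0$. After replacing $f$ by an iterate, assume $U$ is fixed, and pick $k_n\to\infty$ with $y_n:=f^{\circ k_n}(c)\to x_0\in\partial U\cap\omega(c)$; passing to a subsequence, $y_n\notin\partial U$. Semi-hyperbolicity makes Theorem~\ref{mane lemma} and Lemma~\ref{remk} applicable at $x_0$, so for a small round disk $N$ about $x_0$ every component of $f^{-k_n}(N)$ is a simply connected Jordan disk mapping onto $N$ with degree bounded by some fixed $d$. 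Let $V_n$ denote the component containing $c$; since $c$ has local degree $d_c\ge 2$ under $f$, the covering $f^{\circ k_n}:V_n\to N$ is branched at $c$ with local degree at least $d_c$.

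I will next extract a quantitative distortion estimate centered at $c$. Writing $f^{\circ k_n}(z)-y_n=\psi_n((z-c)^{d_c})$ on a sub-disk $V_n'\subset V_n$ around $c$ for a univalent $\psi_n$ with $\psi_n(0)=0$ and $A_n:=\psi_n'(0)$, Koebe's theorem (Lemma~\ref{lema-distor-1}) yields $|f^{\circ k_n}(z)-y_n|\asymp|A_n|\,|z-c|^{d_c}$ uniformly in $n$, with $V_n'$ chosen so that $f^{\circ k_n}(V_n')$ fills a definite sub-disk of $N$. The preimage of $\partial U\cap N$ in $V_n'$ then decomposes into $d_c$ disjoint Jordan arcs $T_1^{(n)},\dots,T_{d_c}^{(n)}$, each on the boundary of a preimage Fatou component $\tilde U_i^{(n)}$ of $U$. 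Setting $\rho_n:=|y_n-x_0|\to 0$, the distortion yields
\[
\dist(T_i^{(n)},T_j^{(n)})\asymp(\rho_n/|A_n|)^{1/d_c},\qquad \diam(T_i^{(n)})\asymp(\diam(N)/|A_n|)^{1/d_c},
\]
so the ratio $\dist(T_i^{(n)},T_j^{(n)})/\diam(T_i^{(n)})\asymp(\rho_n/\diam(N))^{1/d_c}\to 0$.

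The hard part is forcing $\tilde U_i^{(n)}\ne\tilde U_j^{(n)}$ for some $i\ne j$ and all large $n$; for this I invoke Proposition~\ref{uniform_quasicircle}, which gives a uniform $K\ge 1$ such that every $\partial\tilde U_i^{(n)}$ is a $K$-quasicircle. Suppose, toward a contradiction, that all $T_i^{(n)}$ lie on the boundary of a single component $\tilde U_n^*$, and select $x\in T_i^{(n)}$, $y\in T_j^{(n)}$ realising $|x-y|=\dist(T_i^{(n)},T_j^{(n)})$. Because $V_n'\cap\partial\tilde U_n^*$ is the disjoint union of the $T_k^{(n)}$'s, any subarc of $\partial\tilde U_n^*$ from $x$ to $y$ must exit $V_n'$; hence both components of $\partial\tilde U_n^*\setminus\{x,y\}$ have diameter at least $\dist(x,\partial V_n')\gtrsim(\diam(N)/|A_n|)^{1/d_c}$. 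The quasicircle property bounds the shorter of these two by $K|x-y|\asymp(\rho_n/|A_n|)^{1/d_c}$, so comparing the two inequalities forces $\diam(N)\lesssim K^{d_c}\rho_n\to 0$, contradicting the fixed choice of $N$.

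With distinctness in hand, the trivial bounds $\partial\tilde U_i^{(n)}\supset T_i^{(n)}$ and $\diam(\tilde U_i^{(n)})\ge\diam(T_i^{(n)})$ give
\[
\Delta(\partial\tilde U_i^{(n)},\partial\tilde U_j^{(n)})\le\frac{\dist(T_i^{(n)},T_j^{(n)})}{\diam(T_i^{(n)})}\to 0,
\]
contradicting the uniform relative separation assumption. The distortion and scaling calculations are routine; the decisive geometric ingredient is the uniform quasicircle bound, which prevents the near-$c$ preimages of $U$ from coalescing into a single Fatou component.
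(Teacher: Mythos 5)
Your proof is correct and takes a structurally different route from the paper's, and it is in one respect more careful. The paper centers its construction on the orbit points $c_{k_n}\to x$: it pulls back the shrinking disk $\D(c_{k_n},\epsilon_{k_n})$ together with two fixed-radius disks $\D(c_{k_n},\delta/2)\subset\D(c_{k_n},\delta)$ by $f^{\circ(k_n-1)}$ and $f^{\circ k_n}$, and controls the geometry of the preimages via the \emph{shape} estimate of Lemma~\ref{control turning}\,(1). You instead keep a single fixed disk $N$ about the limit point $x_0$, write a local power normal form $f^{\circ k_n}(z)-y_n=\psi_n((z-c)^{d_c})$, and run Koebe distortion on $\psi_n$; the two distortion machines land on the same scaling $\dist(T_i^{(n)},T_j^{(n)})/\diam(T_i^{(n)})\asymp(\rho_n/\diam N)^{1/d_c}\to 0$. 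The genuinely valuable difference is that you explicitly justify the existence of \emph{two distinct} Fatou components near $c$: the paper simply asserts that, because $c_1$ is a critical value, there are at least two distinct preimage components $U_{k_n},U_{k_n}'$ touching $\overline{X}_{k_n}$, but a priori the $d_c$ preimage sheets might all belong to the same Fatou component. Your argument that coalescence onto a single boundary $\partial\tilde U_n^*$ forces $\diam(N)\lesssim K^{d_c}\rho_n\to 0$, contradicting the uniform quasicircle bound from Proposition~\ref{uniform_quasicircle}, closes this gap cleanly and is a real strengthening. One small point you should fix when writing this up: the local degree of $f^{\circ k_n}$ at $c$ need not equal $d_c=\deg(f,c)$, since the forward orbit of $c$ may pass through other critical points. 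But each critical point can be hit at most once (otherwise the orbit would terminate in a superattracting cycle, contradicting $c\in J_f$), so the local degree stabilizes at some fixed $d_c^*\ge d_c\ge 2$ once $k_n$ is large, and all your estimates go through verbatim with $d_c$ replaced by $d_c^*$. Likewise your remark ``passing to a subsequence, $y_n\notin\partial U$'' is in fact automatic: since $J_f$ is a Sierpi\'nski carpet, no critical point lies on a peripheral circle, hence no forward orbit point of $c$ can land on one.
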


\begin{proof}
	Without loss of generality, we suppose that $\infty\not\in \MJ_f$. Let $\epsilon=\delta_0>0$ be the number defined as in \eqref{delta0}. Let $x\in\partial U\cap\omega(c)$. By Theorem \ref{mane lemma} and Lemma \ref{remk}, there exists a number $\delta_x>0$ such that the open neighborhood $B(x,\delta_x)$ satisfies (P1), (P2) and (P3). Suppose $c_{k_n}:=f^{\circ k_n}(c)$ tends to $x$. Set $\epsilon_{k_n}:=|x-c_{k_n}|$. Clearly $\epsilon_{k_n}\to 0$ as $n\to\infty$.
	
	For a given $0<\delta<\delta_x$, there exists an integer $N\geq 1$ such that for any $n\geq N$, $B(c_{k_n},\epsilon_{k_n})\subseteq B(c_{k_n},\delta)\subseteq B(x,\delta_x)$. Let $\gamma_0$ be the component of $\partial U\cap B(c_{k_n},\delta)$ containing $x$ with endpoints $a_0$ and $b_0$. Denote $X_0:=B(c_{k_n},\epsilon_{k_n})$ and $Y_0:=B(c_{k_n},\delta)$ for some $n\geq N$. After Pulling $X_0$, $Y_0$ back by $f^{\circ (k_n-1)}$ resp. $f^{\circ k_n}$, we denote by $X_{k_n-1},Y_{k_n-1}$ resp. $X_{k_n}$, $Y_{k_n}$ the simply connected components of their preimages containing the critical value $c_1$ resp. the critical point $c$. See Figure \ref{fig-sketch}.
	
\begin{figure}[!htpb]
  \setlength{\unitlength}{1mm}
  \centering
  \includegraphics[width=125mm]{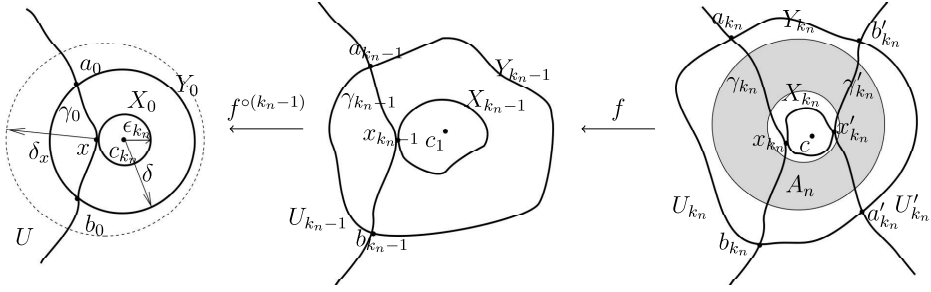}
  \caption{Sketch illustration of the idea in the proof of Proposition \ref{condi-necessary}. One main point in the proof is to verify the existence of two different Fatou components $U_{k_n}$ and $U_{k_n}'$ such that they are both $k_n$-th preimages of the given Fatou component $U$ and the distance between $U_{k_n}$ and $U_{k_n}'$ tends to zero as $n$ tends to infinity.}
  \label{fig-sketch}
\end{figure}		
	
	Consider the branched covering $f^{\circ (k_n-1)}:(X_{k_n-1},Y_{k_n-1})\to (X_0,Y_0)$. Since the boundary of every Fatou component cannot contain any critical point if the Julia set is a Sierpi\'{n}ski carpet, it means that $\gamma_{0}$ avoids the postcritical sets. There exist $x_{k_n-1}$, $\gamma_{k_n-1}$ and $U_{k_n-1}$ which are components of the preimages of $x, \gamma_0$ and $U$ respectively such that $x_{k_n-1}\in\gamma_{k_n-1}\cap\partial X_{k_n-1}$ and $\gamma_{k_n-1}$ is a subarc of the Fatou component $U_{k_n-1}$ with endpoints $a_{k_n-1}, b_{k_n-1}$. Applying the Maximum Value Principle on  $f^{\circ (k_n-1)}|_{Y_{k_n-1}}$, we have $a_{k_n-1},b_{k_n-1}\in \partial Y_{k_n-1}$.
	
	Now we consider the other branched covering $f:(X_{k_n},Y_{k_n})\to (X_{k_n-1},Y_{k_n-1})$. Since $\tu{deg}(f|_{X_{k_n}})\geq 2$, there exist at least two different points $x_{k_n}$, $x'_{k_n}\in f^{-1}(x_{k_n-1})$ lying on $\partial X_{k_n}$, and also two disjoint Jordan arcs $\gamma_{k_n}\subseteq\partial U_{k_n},\gamma_{k_n}'\subseteq\partial U'_{k_n}$ containing $x_{k_n},x'_{k_n}$ respectively, where $U_{k_n}$ and $U_{k_n}'$ are Fatou components of $f$. The subarcs $\gamma_{k_n}$ and $\gamma_{k_n}'$ meet $\partial Y_{k_n}$ at their endpoints $\{a_{k_n},b_{k_n}\}$ and $\{a'_{k_n},b'_{k_n}\}$.
	
	\textbf{Claim}: For sufficiently large $n\geq N$, the Fatou components $U_{k_n}, U_{k_n}'$ are distinct.
	
	Indeed, as $\epsilon_n\to 0$, the modulus $\tu{mod}(Y_0\setminus\ol{X}_0)\to\infty$ and thus $\tu{mod}(Y_{k_n}\setminus\ol{X}_{k_n})\to\infty$, according to Lemma \ref{lem:KL09} and Theorem \ref{mane lemma}. Then for sufficiently large $n$ there exists a round annulus $A_n:=\{z\in\mb{C}:\,0<r_n<|z|<r_n'\}$ which is \emph{essentially} contained in $Y_{k_n}\sm \ol{X}_{k_n}$ such that
	$$\tu{mod}(A_n)=\frac{1}{2\pi}\tu{log}\frac{r'_n}{r_n}\to+\infty$$
	as $n\to \infty$ (see \cite[Theorem 2.1, p.\,10]{McM94}).
	
	We argue by contradiction and assume $U_{k_n}=U_{k_n}'$. Then for any component $\gamma$ of $\partial U_{k_n}\setminus\{x_{k_n},x_{k_n}'\}$, the turning of $\gamma$ satisfies
	\begin{equation}\label{distant}
	\begin{split}
	\Lambda(\gamma;x_{k_n},x_{k_n}')
	\geq &~\frac{\tu{min}\{\tu{diam}\gamma_{k_n,1},\tu{diam}\gamma_{k_n,2},\tu{diam}\gamma'_{k_n,1},\tu{diam}\gamma'_{k_n,2}\}}{|x_{k_n}-x_{k_n}'|}\\
	\geq &~\frac{r'_n-r_n}{2r_n}\to \infty \text{ as } n\to\infty,
	\end{split}
	\end{equation}
	where the subarcs $\gamma_{k_n,1},\gamma_{k_n,2}$ and $\gamma'_{k_n,1},\gamma'_{k_n,2}$ are the components of $\gamma_{k_n}\setminus\{x_{k_n}\}$ and $\gamma'_{k_n}\setminus\{x_{k_n}'\}$ respectively. The second inequality holds because each of the four subarcs connects the two complement components of $A_n$.
	By Proposition \ref{uniform_quasicircle}, the family of peripheral circles of the Fatou set $\mathcal{F}_f$ are uniformly quasicircles. Thus we get a contradiction by \eqref{distant}. The claim follows.
	
	We now estimate the relative distance between $\partial U_{k_n}$ and $\partial U_{k_n}'$. Note that the bounded component of $\mathbb{C}\setminus A_n$ covers $X_{k_n}$. Therefore, we have
	\begin{equation*}
	\begin{split}
	\Delta(\partial U_{k_n},\partial U_{k_n}')\leq ~ & \frac{|x_{k_n}-x_{k_n}'|}{\min\{\diam(\partial U_{k_n}), \diam(\partial U_{k_n}')\}} \\
	\leq ~ & \frac{|x_{k_n}-x_{k_n}'|}{\tu{min}\{\tu{diam}\gamma_{k_n,1},\tu{diam}\gamma_{k_n,2},\tu{diam}\gamma'_{k_n,1},\tu{diam}\gamma'_{k_n,2}\}}
	\\
	\leq ~ & \frac{2r_n}{r_n'-r_n}\to 0 \text{ as } n\to\infty.
	\end{split}
	\end{equation*}
	This means that the boundaries of Fatou components of $f$ are not uniformly relatively separated. The proof is complete.
\end{proof}

\vskip0.2cm
\noindent{\textit{Proof of Theorem \ref{thm-main-2}}}. This follows immediately by Propositions \ref{uniform_quasicircle} -- \ref{condi-necessary}.
\hfill $\square$

\subsection{The locally porous property of carpet Julia sets}

Let $f$ be a semi-hyperbolic rational map whose Julia set $\mathcal{J}_f$ is a carpet. We use $\mathcal{C}_f$ to denote the family of corresponding peripheral circles. By Theorem \ref{uniform_quasicircle}, the family $\mathcal{C}_f$ consists of uniform quasicircles. In order to describe all the locations and scales of curves in $\mathcal{C}_f$, we need the definition of locally porous by following \cite[p.\,4323]{Mer14} (see also \cite[p.\,32]{BLM16}).

The carpet Julia set $\mathcal{J}_f$ is said to be \emph{locally porous}\footnote{It seems that the definition here is slightly different from that in \cite[p.\,4323]{Mer14}. However, they are obviously equivalent.} provided that there exists $0<c<1$ such that for any $z\in \mathcal{J}_f$ and any $0<r\leq 2$, there exists a circle $\gamma$ in $\mathcal{C}_f$ such that $\gamma\subseteq B_{\sigma}(z, r)$
and $cr\leq \text{diam}_{\sigma}(\gamma)$. Since the elements in family $\MC_f$ are uniform quasicircles and they have uniformly bounded shape, it is not difficult to check that the definition of locally porous here implies that $\mathcal{J}_f$ is \emph{porous}  (i.e. \emph{shallow} in the sense of McMullen, see also \cite{Yin00}) and the peripheral circles \emph{occur on all locations and scales} \cite[p.\,24]{BLM16}.

\begin{thm}\label{local_porous}
	Let $f$ be a semi-hyperbolic rational map whose Julia set $\mathcal{J}_f$ is a carpet. Then $\mathcal{J}_f$ is locally porous.
\end{thm}

\begin{proof}
	Since M\"{o}bius transformations are uniformly continuous homeomorphisms with respect to spherical metric. For simplicity, we can assume $\mathcal{J}_f$ is contained in $\mathbb{D}$ by conjugating a suitable M\"{o}bius transformation and prove the locally porous property in the
	sense of Euclidean metric.
	
	Firstly, for $0<r<1$ we define two functions
	$$h_1(r):=\mathop{\text{inf}}^{}_{z\in\mathcal{J}_f}\text{inf}\{s>0; \text{diam}f(B(z,s))\geq r\}/r$$
	and
	$$h_2(r):=\mathop{\text{inf}}^{}_{z\in\mathcal{J}_f}\mathop{\text{sup}}^{}_{\gamma\in \mathcal{C}_f}\{\text{\diam}(\gamma);\gamma\subseteq B(z,r)\}/r.
	$$
	One can easily check that both of the functions are positive for any $r\in (0,1)$.
	
	Since $f$ is a semi-hyperbolic rational map, according to \cite{Yin99}, there exist a positive constant $\delta_0>0$ and an integer $d\geq 1$ such that for every $z\in\MJ_f$, $n\geq 0$ and every connected component $V$ of $f^{-n}(B(z,2\delta_0))$, the degree of $f^{\circ n}:V\to B(z,2\delta_0)$ is $\leq d$. Moreover, we assume that $\delta_0$ is chosen small enough such that each $V$ is simply connected by (P3) in Lemma \ref{remk}.
	
	Given $z\in \mathcal{J}_f$ and $0<r<\delta_0/3$, we denote $z_n:=f^{\circ n}(z)$, $L_n:=\text{max}\{|z_n-w|;w\in\partial f^{\circ n}(B(z,r))\}$ and
	$l_n:=\text{min}\{|z_n-w|;w\in\partial f^{\circ n}(B(z,r))\}$. By the expansion of $f$ on the Julia set, there exists a minimal integer $n_0\geq 0$ such that
	$$\text{diam}f^{\circ (n_0+1)}(B(z,r))\geq \delta_0.$$
	Then $f^{\circ n_0}(B(z,r))\subseteq B(z_{n_0},L_{n_0})\subseteq B(z_{n_0},\delta_0)$ and $\text{diam}f(B(z_{n_0},L_{n_0}))\geq \delta_0$. Let $U$ be a component of $f^{-{n_0}}(B(z_{n_0},2\delta_0))$ containing $B(z,r)$.
	By the choices of $h_1$ and $\delta_0$ and by applying the Lemma \ref{forward_dis} to the proper holomorphic map $f^{n_0}:U\to B(z_{n_0},2\delta_0)$, there exists a constant $K_1$ depending only on $d$ such that
	\begin{equation}\label{rato1}
	K_2:=h_1(\delta_0)\leq \frac{L_{n_0}}{\delta_0}\leq\frac{K_1l_{n_0}}{\delta_0}\leq K_1.
	\end{equation}
	Let $\gamma_{n_0}$ be a peripheral circle in the disk $B(z_{n_0}, K_2\delta_0/K_1)$ such that
	\begin{equation}\label{rato2}
	K_3:=h_2(K_2\delta_0/K_1)\leq \frac{K_1\text{diam} \gamma_{n_0}}{K_2\delta_0}=\frac{K_1|x_{n_0}-y_{n_0}|}{K_2\delta_0}\text{ for some }
	x_{n_0},y_{y_0}\in \gamma_{n_0}.
	\end{equation}
	
	Now we consider the pullback of $B(z_{n_0},l_{n_0})$ by $f^{\circ n_0}$. Let $B$ and $\gamma$, respectively, be two components of
	$f^{-n_0}(B(z_{n_0},l_{n_0}))$ and $f^{-n_0}(\gamma_{n_0})$ in $B(z,r)$. Clearly, we have $r\leq \text{diam} B\leq 2r$.
	Let $x,y\in \gamma$ with
	$f^{\circ n_0}(x)=x_{n_0}$ and $f^{\circ n_0}(y)=y_{n_0}$. According to Lemma \ref{control turning}(2), \eqref{rato1} and \eqref{rato2}, there exists a constant $K_4>0$ depending only on $d$ such that
	$$
	\frac{r}{\text{diam}\gamma}\leq \frac{\text{diam} B}{|x-y|}\leq K_4\frac{\text{diam}f^{\circ n_0}(B)}{|x_{n_0}-y_{n_0}|}=\frac{2K_4l_{n_0}}{\text{diam}\gamma_{n_0}}\leq \frac{2K_1K_4}{K_2K_3}=:\frac{1}{c}.
	$$
	
	By the arbitrariness of $z$ and $r$, the proof is finished.
\end{proof}

\section{Dynamics on the Fatou components}\label{sec-dyn-Fatou}

In this section, we describe the dynamics of a rational map on its Fatou components when the corresponding Julia set is locally connected. Recall that $\text{Comp}(\mathcal{F}_f)$ is the collection of all the Fatou components of a given rational map $f$.

The following lemma extends the results in \cite[Lemma 3.3]{BLM16} to a broader range of situations.

\begin{lema}[Dynamics on the Fatou components]\label{dynamic_Fatou}
	Let $f$ be a rational map whose Julia set $\mathcal{J}_f$ is locally connected. Suppose that the post-critical set $\MP_f $ is finite on $\mathcal{F}_f$, i.e., $\sharp(\MP_f\cap\MF_f)<\infty$.
	Then there exists $\{(U, \phi_U)\}_{U\in\text{Comp}(\mathcal{F}_f)}$ such that
	
	$(1)$ Each $\phi_U:{U}\to{\mathbb{D}}$ is conformal.
	
	$(2)$ Assume that $U$ is a $p$-periodic Siegel disk and $\lambda:=e^{2\pi i\theta}$ is the rotation number of $f^{\circ p}|_U$ with $\theta$ irrational. Then $\phi_{f(U)}\circ f\circ\phi_U^{-1}=P_{p,\theta}$, where $P_{p,\theta}:z\mapsto e^{2\pi i\theta/p}z$.
	
	$(3)$ For any other Fatou component $U$ which is not a periodic Siegel disk, we have $\phi_{f(U)}\circ f\circ\phi_{U}^{-1}=P_{\delta_U}$, where $\delta_U:=\deg(f|_U)$ and $P_{\delta_U}: z\mapsto z^{\delta_U}$.
\end{lema}

\begin{proof}
	Firstly, since the Julia set is locally connected, then $\mathcal{J}_f$ is connected \cite[Corollary 4.15]{Mil06}. Thus $\mathcal{F}_f$ has no Herman rings and every Fatou component is simply connected. Since $\sharp(\MP_f\cap\MF_f)<\infty$, it follows that the periodic Fatou components of $f$ can only be super-attracting basins or Siegel disks (see \cite[\S\S8, 10]{Mil06}).
	
	Let $(U_0, z_0)$, $\cdots$, $(U_{p-1}, z_{p-1})$, $(U_p,z_p):=(U_{0},z_{0})$ be a periodic cycle of super-attracting basins with $z_k\in U_k$ such that  $f^{\circ k}(U_0, z_0)=(U_k,z_k)$. We denote $\delta_k:=\deg(f|_{U_k})$ and $h_k:=P_{\delta_k}$.
	According to \cite[Theorem 9.3]{Mil06}, there exists a global B\"{o}ttcher coordinate $\phi_{U_0}:U_0\to\D$ of $f^{\circ p}$ on $U_0$ such that
	\begin{equation}\label{cordinate}
	\phi_{U_0}\circ f^{\circ p}\circ \phi_{U_0}^{-1}=h:=h_{p-1}\circ \cdots\circ h_0, \text{ and } \phi_{U_0}(z_0)=0.
	\end{equation}
	Choose $\xi_0\in U_0\setminus\{z_0\}$ and $\eta_0:=\phi_{U_0}(\xi_0)\in\D\setminus\{0\}$. Set $\xi_k:=f^{\circ k}(\xi_0)$ and $\eta_k:=h_{k-1}\circ\cdots\circ h_{0}(\eta_0)$ for $1\leq k\leq p$.
	
	In what follows we try to construct $\phi_{U_k}$ by induction. For $k=1,\cdots, p$,
	by considering the unbranched coverings
	$$f:U_{k-1}\setminus\{z_{k-1}\} \to U_{k}\setminus\{z_{k}\}\text{ and }
	h_{k-1}:\mathbb{D}\setminus\{0\}\to\mathbb{D}\setminus\{0\},$$
	there exists a unique lift $\phi_{U_k}: U_k\to\mathbb{D}\setminus\{0\}$ of $\phi_{U_{k-1}}$ such that $h_{k-1}\circ\phi_{U_{k-1}}=\phi_{U_k}\circ f$ and $\phi_{U_k}(\xi_k)=\eta_k$ \cite[Propositions 1.33 and 1.34]{Hat02}. Comparing the degrees, we know that $\phi_{U_k}$ is conformal. Moreover, by the definition of the lifts, we have $\phi_{U_k}(z_k)=0$. The uniqueness of lift also guarantees that $\phi_{U_p}=\phi_{U_0}$.
	
	Let $(U_0, z_0)$, $\cdots$, $(U_{p-1}, z_{p-1})$, $(U_p,z_p):=(U_{0},z_{0})$ be a periodic cycle of Siegel disks with $z_k\in U_k$ such that $f^{\circ k}(U_0, z_0)=(U_k,z_k)$. Let $e^{2\pi i\theta}:=(f^{\circ p})'(z_0)$ be the multiplier and $h_k:=P_{p, \theta}:z\mapsto e^{2\pi i\theta/p}z$. By \cite[Lemma 11.1]{Mil06}, there also exists a global linearization $\phi_{U_0}$ of $f^{\circ p}$ on $U_0$ satisfying Equation \eqref{cordinate}. Then by exactly the same argument as in super-attracting case, we can obtain $\phi_{U_k}$, $1\leq k\leq p-1$, as desired.
	
	Now we can construct $\phi_U$ for any pre-periodic Fatou component $U$. To see this, by induction for $k=1,2,\cdots n$ and so on, suppose that $U$ is $k$-preperiodic, i.e., $k$ is the minimal positive integer such that $U_k:=f^{\circ k}(U)$ is periodic. Without loss of generality, we assume that $k=1$. As before, lifting $\phi_{U_1}:U_1\to \mathbb{D}$ via the coverings $f:U\to U_1$ and $P_{\delta_U}:\mathbb{D}\to \mathbb{D}$, we can get a conformal mapping $\phi_U:U\to \mathbb{D}$ such that $\phi_{U_1}\circ f=P_{\delta_U}\circ \phi_{U}$.
\end{proof}

\begin{rmk}
	If $\mathcal{J}_f$ is a carpet, then each Fatou component of $f$ is a Jordan domain. By Carath\'{e}odory's theory, one can extend $\phi_U:U\to\D$ to a homeomorphism from $\overline{U}$ to $\overline{\mathbb{D}}$. Clearly the extension $\phi_U$ still satisfies the second and third statements of the above Lemma on $\overline{U}$.
\end{rmk}

\begin{lema}\label{lemma_rotation}
	Every self-homeomorphism on $\partial \mb{D}$ which conjugates two irrational rotations is indeed a rotation.
\end{lema}
\begin{proof}
	Suppose $h:\partial \mb{D}\to\partial\mb{D}$ is a homeomorphism as in the statement. Since rotation numbers are preserved under conjugation, we can assume further that
	$$h\circ R_{\theta}=R_{\theta}\circ h\tu{\ \ \ and \ \ \ }h(1)=e^{2\pi i\theta_0}$$ with $\theta\in (0,1)$ an irrational number, $\theta_0\in [0,1)$ and $R_\theta:z\mapsto e^{2\pi i\theta}z$. Then for any $n\geq 0$, we conclude that
	$$h(R_\theta^{\circ n}(1))=R_{\theta}\circ h\circ R_\theta^{\circ n-1}(1)=\cdots=R_\theta^{\circ n}(h(1))=R_\theta^{\circ n}(R_{\theta_0}(1))=R_{\theta_0}(R_\theta^{\circ n}(1)).$$
	
	Claim that the orbit $E:=\cup_{n\geq 0} \{R_\theta^{\circ n}(1)\}$ is dense in $\partial \mb{D}$. Otherwise, one can choose a component $I$ of $\partial \mb{D}\setminus\ol{E}$ with the maximum length. The backward intervals $I, R_{\theta}^{-1}(I), R_{\theta}^{-2}(I),\cdots$ are of the same length and disjoint from $\ol{E}$. But any two of them are either disjoint or exactly coincide, as the length of the union of any two overlap intervals $R_\theta^{-i}(I)$ and $R_\theta^{-j}(I)$ is no less than that of $I$. Since the total length of $\partial \mb{D}$ is finite, there are infinitely many intervals of $\{R_\theta^{-n}(I)\}_{n\geq 0}$ coincide. Suppose $I_0:=R_{\theta}^{-n_1}(I)=R_{\theta}^{-n_2}(I)$ with $1\leq n_1<n_2$. So $R_\theta^{n_2-n_1}(I_0)=I_0$. In particular, $R_\theta^{n_2-n_1}(\partial I_0)=\partial I_0$. This means the irrational rotation $R_\theta^{n_2-n_1}$ has periodic points. It is impossible. Thus the claim holds.
	
	The continuity implies that $h$ is the rotation $R_{\theta_0}$.
	The lemma is complete.
\end{proof}

\section{Quasisymmetric rigidity of carpet Julia sets}\label{quas_rigidity}

\subsection{Carpet Julia sets}
Let $S$ be a (Sierpi\'{n}ski) carpet. Then it is homeomorphic to the standard middle-third carpet. By \emph{Whyburn's characterization}, a set $S$ in $\EC$ is a carpet if and only if it is compact, connected, locally connected, has no local cut-points and has topological dimension one.
A set $E$ is called \emph{buried} in carpet $S$, if $E$ is disjoint from any peripheral circles. A domain $\Omega$ is said to be \emph{clean} for $S$ provided that $\Omega$ is a Jordan domain and the boundary $\partial \Omega$ is buried in $S$. Notice that any buried point in $S$ has arbitrarily small clean neighborhoods in $\EC$.

\begin{lema}\label{chain_domains}
	Let $S$ be a carpet in $\EC$ and $P\subseteq S$ be a finite set. Let $p\neq q$ be two buried points in $S\setminus P$. Then for any
	$\epsilon>0$ there exist clean domains $U_0,\cdots,U_N$ with $p\in U_0, q\in U_N$ such that $P\cap U_k=\emptyset$, $U_k\cap U_{k+1}\neq\emptyset$ and the spherical diameter
	$\text{diam}_{\sigma}U_k<\epsilon$ for $0\leq k\leq N-1$.
\end{lema}

\begin{proof}
	By collapsing the closure of each component of $\EC\setminus S$ to a point, we get the quotient map $\pi:\EC\to\mathbb{S}^2$ by Moore's theorem \cite{Moo25}. Set $\widetilde{p}:=\pi(p), \widetilde{q}:=\pi(q), \widetilde{P}:=\pi(P)$ and $A:=\pi(\EC\setminus S)$. It follows that $A$ is a set of countable many points in $\mathbb{S}^2$. We can choose a simple arc $\widetilde{\gamma}$ in $\mathbb{S}^2\setminus(\widetilde{P}\cup A)$ joining $\widetilde{p}$ and $\widetilde{q}$. Then the lift $\gamma:=\pi^{-1}(\widetilde{\gamma})$ is a simple arc buried in $S\setminus P$ connecting $p$ and $q$.
	
	Note that there exists a local neighborhood basis of any buried point in $S$, which consists of clean domains. Since $\gamma$ is compact, for any $\epsilon>0$, applying a standard argument, one can get a sequence $(U_0, x_0),\cdots,(U_{N},x_N)$ with $U_k\cap U_{k+1}\neq \emptyset, x_k\in U_k\cap \gamma$, $x_0:=p,x_N:=q$ such that the clean domains $U_k$ are disjoint from $P$ and such that $\text{diam}_\sigma(U_k)<\epsilon$ for $0\leq k\leq N$. Thus the Lemma follows.
\end{proof}

\subsection{Rigidity of carpet Julia sets}

The self-homeomorphisms on a carpet form a large group. However, strong rigidity results are valid if one considers the \textit{quasisymmetric} homeomorphism from the carpet onto itself. In \cite{BLM16}, the quasisymmetric rigidity of the carpets that are Julia sets of postcritically-finite rational maps was studied. In this section, we show that the rigidity of carpet Julia sets can be held on a more general situation.

We first recall some definitions. A closed subset of $\EC$ is called a \textit{Schottky set} if its complements are open round disks with disjoint closures. Obviously, a round carpet is a Schottky set. A \textit{relative Schottky set} in a domain $D\subset\EC$ is a subset of $D$ obtained by removing from $D$ a collection of round disks whose closures are contained in $D$ and are pairwise disjoint.

Note that the locally porous property is quasisymmetric invariant. In other words, if $\xi$ is quasiconformal map on $\EC$ sending a locally porous Julia set to a Schottky set $S$, then $S$ is also locally porous. Recall that $\mathcal{F}_f$ and $\mathcal{P}_f$, respectively, denote the Fatou set and the postcritical set of a rational map $f$.

\begin{thm}\label{qs_rigidity}
	Let $f$ be a rational map with carpet Julia set $\mathcal{J}_f$. Suppose that
	
	$\bullet$ $\mathcal{J}_f$ is locally porous and
	
	$\bullet$ $\mathcal{J}_f$ is quasisymmetric equivalent to a round carpet.
	
	Then for any quasisymmetric equivalence $\xi:\mathcal{J}_f\to\mathcal{J}_g$ between carpet Julia sets $\mathcal{J}_f$ and $\mathcal{J}_g$, we have
	
	$(1)$ there exist positive integers $m$, $m'$ and $l$ such that $g^{\circ m}\circ\xi\circ f^{\circ l}=g^{\circ m'}\circ\xi$ on $\mathcal{J}_f$.
	
	$(2)$ furthermore, if both of the sets $\mathcal{F}_f\cap \MP_f $ and $\mathcal{F}_g\cap\MP_g$ are finite, then $\xi$ is the restriction of a M\"{o}bius transformation.
\end{thm}

The proof of this theorem is essentially based on the rigidity of \emph{Schottky maps} on locally porous Schottky sets. We recommend \cite{Mer12, Mer14} for the related theory. Our approach is more or less like the proof of \cite[Theorem 1.4]{BLM16}. But here we need to deal with the Siegel disk situation additionally. Also our proof indicates that if one of the complement component of the carpet Julia set $\mc{J}_f$ is a Siegel disk, then so is the carpet $\mc{J}_g$.

\begin{proof}
	Let $\xi:\mathcal{J}_f\to\mathcal{J}_g$ be a quasisymmetric homeomorphism between $\MJ_f$ and $\MJ_g$.
	Since the peripheral circles of $\mathcal{J}_f$ are uniform quasicircles, by \cite[Proposition 5.1]{Bon11}, there exists a quasiconformal homeomorphism from $\EC$ onto $\EC$ whose restriction on $\mathcal{J}_f$ is $\xi$. We denote this quasiconformal homeomorphism still by $\xi$ for convenience.
	
	(1) Let $p$ be a repelling periodic point buried in $\mathcal{J}_f$ such that $f^{\circ d}(p)=p$ for some $d\geq 1$ (the existence of a such point is obvious since the number of periodic Fatou components are finite but the repelling periodic points are dense in the Julia set). Let $V_1$ be its clean neighborhood (small enough) such that
	the restriction
	$$f_p^{\circ d}:=f^{\circ d}:V_1\to f^{\circ d}(V_1)$$
	is conformal and $\overline{V_1}\subseteq f^{\circ d}(V_1)$.
	
	Let $E$ be a finite subset of $\mathcal{F}_g$ containing at least three points such that $g(E)\cap \xi(V_1)=\emptyset$. Given $k\geq 1$, we know that $g^{\circ m}\circ\xi\circ f_p^{-dk}(V_1)$ will cover the sphere except at most two points for sufficiently large $m$ (see \cite[Theorem 4.10 and Corollary 14.2]{Mil06}). Here $f_p^{-dk}$ denotes the $k$-th inverse of the conformal map $f_p^{\circ d}|_{V_1}$. Then we can set
	$$m(k):=\text{max}\{m;g^{\circ m}\circ\xi\circ f_p^{-dk}(V_1)\cap E=\emptyset \}.$$
	Obviously, $g^{\circ m(k)}\circ\xi\circ f_p^{-dk}(V_1)$ intersects $g^{-1}(E)$ and thus
	\begin{equation}\label{no_constant}
	\text{diam}_\sigma(g^{\circ m(k)}\circ\xi\circ f_p^{-dk}(V_1))\geq \text{dist}_\sigma(g^{-1}(E),\mathcal{J}_g).
	\end{equation}
	So $\{g^{\circ m(k)}\circ\xi\circ f_p^{-dk}|_{V_1}\}_{k\geq 1}$ is a sequence of uniformly quasiregular maps whose images avoid the finite set $E$.
	Applying \cite[Corollary 5.5.7, p.\,182]{AIM09} and composing with a quasiconformal map $\xi^{-1}$, we get a sequence of $K$-quasiregular mappings $\{h_k:=\xi^{-1}\circ g^{\circ m_k}\circ\xi\circ f_p^{-n_k}\}$ which converges locally uniformly to a mapping $h$ on $V_1$, where $m_k:=m(k)$ and $n_k:=dk$. Note that $h$ is also a $K$-quasiregular mapping but not a constant by \eqref{no_constant}.
	
	Since the critical points of a quasiregular mapping are locally finite, we can pick a clean Jordan domain $V_2\Subset V_1$ with $V_2\cap \mathcal{J}_f\neq\emptyset$ such that $h$ on $V_2$ is $K$-quasiconformal. From the \emph{Stoilow fractorization} (see \cite[Corollary 5.5.4, p.\,181]{AIM09}) and the Rouch\'{e}'s Theorem on holomorphic functions, one may assume that
	for sufficiently large $k_0$, the mappings $\{h_k|_{V_2}\}_{k\geq k_0}$ are $K$-quasiconformal and uniformly converges to $h$.
	We also have that
	$$h(V_2\cap \mathcal{J}_f)\subseteq \mathcal{J}_f\text{ and }h(V_2\cap\mathcal{F}_f)\subseteq\mathcal{F}_f.$$
	This is because each $h_k$ sends $V_2\cap\mathcal{J}_f$ and $V_2\cap\mathcal{F}_f$ into $\mathcal{J}_f$ and $\mathcal{F}_f$ respectively.
	
	Again we choose a repelling periodic point $q$ buried in a clean domain $V_3\Subset V_2$ such that $f^{\circ s}(q)=q$ and $f^{\circ s}|_{V_3}$ is conformal.
	Let $\beta$ be a quasisymmetric mapping sending $\mathcal{J}_f$ to a round carpet $S\subseteq \EC$. Its quasiconformal extension
	to the whole sphere is still denoted by $\beta$. If we set $U:=\beta(V_3)$, $\widetilde{q}:=\beta(q)$, $\widetilde{f}=\beta\circ f\circ\beta^{-1}$, $\widetilde{f}_p=\beta\circ f_p\circ\beta^{-1}$, $\widetilde{h}:=\beta\circ h\circ \beta^{-1}|_U$ and $\widetilde{h}_k:=\beta\circ h_k\circ \beta^{-1}|_U$. Then $U\cap S$ is a locally porous relative Schottky set in $U$ and all these mappings $\widetilde{f}$, $\widetilde{h}$, $\widetilde{h_k}$ are Schottky maps in the sense of \cite{Mer14} (see also \cite[Theorem 1.2]{Mer12}).
	
	By \cite[Theorem 4.1]{Mer14}, the derivative of $\widetilde{f}^{\circ s}$ in the sense of \cite{Mer14} at the fixed point $\widetilde{q}$  is not equal to $1$. Then we apply \cite[Theorem 5.2]{Mer14} to have that $\widetilde{h}_k=\widetilde{h}_{k+1}$ on $U\cap S$ for a sufficiently large $k$. That is,
	$$
	\beta\circ\xi^{-1}\circ g^{\circ m_k}\circ\xi\circ f^{-n_k}_p\circ\beta^{-1}=\beta\circ\xi^{-1}\circ g^{\circ m_{k+1}}\circ\xi\circ f_p^{-n_{k+1}}\circ\beta^{-1}
	$$
	on $U\cap S$.
	If we set $\widetilde{g}:=\beta\circ\xi^{-1}\circ g\circ\xi\circ\beta^{-1}$, $m:=m_k$, $l:=n_{k+1}-n_k$ and $m':=m_{k+1}$, then
	\begin{equation}\label{equations}
	\widetilde{g}^{\circ m}\circ\widetilde{f}^{\circ l}=\widetilde{g}^{\circ m'}
	\end{equation}
	hold on $\widetilde{f}_p^{-n_{k+1}}(U\cap S)$.
	
	Now we are going to extend the equation \eqref{equations} to the whole carpet $S$.
	Note that the mappings $\widetilde{g}^{\circ m}\circ\widetilde{f}^{\circ l}$ and $\widetilde{g}^{\circ m'}$ are locally quasisymmetric on $S$ except at the finite set $P:=S\cap(\text{Crit}(\widetilde{g}^{\circ m}\circ\widetilde{f}^{\circ l})\cup\text{Crit}(\widetilde{g}^{\circ m'}))$.
	For any $\widetilde{q}_0$ buried in $S\setminus P$, by Lemma \ref{chain_domains} there exists a sequence of clean domains $U_0:=\widetilde{f}_p^{-n_{k+1}}(U), U_1, \cdots, U_N$ with $\widetilde{q}_0\in U_N$ such that $U_k\cap U_{k+1}\neq\emptyset$, $U_k\cap P=\emptyset$ and on each $U_k$ the mappings $\widetilde{g}^{\circ m}\circ\widetilde{f}^{\circ l}$ and $\widetilde{g}^{\circ m'}$ are injective. Then the restrictions of $\widetilde{g}^{\circ m}\circ\widetilde{f}^{\circ l}$ and $\widetilde{g}^{\circ m'}$ on each $S\cap U_k$ are Schottky maps. According to \cite[Corollary 4.2]{Mer14}, the equation \eqref{equations} holds at $\widetilde{q}_0$. Finally by the arbitrariness of $\widetilde{q}_0$ and the continuity, the equation \eqref{equations} holds also on $P$, on the peripheral circles and thus on the whole $S$.
	By the definition of $\widetilde{g}$ and $\widetilde{f}$, clearly $(1)$ follows.
	
	(2) Suppose $\sharp(\MP_f \cap\mathcal{F}_f)<\infty$ and $\sharp(\MP_g\cap\mathcal{F}_g)<\infty$. Since $\mathcal{J}_{f^{\circ k}}=\mathcal{J}_f$, without loss of generality, we assume the periodic Fatou components are fixed by $f$. In the following argument, we always assume that the integer $l$ in \eqref{equations} is $1$. Otherwise, one can consider the $l$-th iteration of $f$. By Lemma \ref{dynamic_Fatou}, we get two families
	$\{(U, \psi_U)\}_{U\in\text{Comp}(\mathcal{F}_f)}$ and $\{(V, \varphi_V)\}_{V\in\text{Comp}(\mathcal{F}_g)}$.
	
	For each $n\geq 0$, let $U_n$ be a component of $\mathcal{F}_f$ with $n$ the minimal integer such that $f^{\circ n}(U_n)$ is fixed. We also set
	$V_n:=\xi(U_n)$, $W_n:=g^{\circ m}\circ\xi\circ f(U_n)$, $G_n:=\varphi_{W_n}\circ g^{\circ m}\circ\varphi_{V_n}^{-1}$, $\widetilde{\xi}_n:=\varphi_{V_n}\circ\xi\circ \psi_{U_n}^{-1}|_{\partial{\D}}$, $F_{n}:=\psi_{U_{n-1}}\circ f\circ \psi^{-1}_{U_{n}}$ and  $\widetilde{G}_n:=\varphi_{W_n}\circ g^{\circ m'}\circ \varphi^{-1}_{V_n}$, where $U_{-1}:=U_0$.
	
	\vskip 0.2cm
	
	\tb{Claim}\ \  Each $\widetilde{\xi}_n$ is a rotation and thus can be conformally extended to $\mathbb{D}$.
	\begin{proof}[Proof of the claim]
		
		For $n=0$, we have $f(U_0)=U_0$. The $\eqref{equations}$ can be written as
		\begin{equation}\label{start}
		\varphi_{W_0}\circ g^{\circ m}\circ\varphi^{-1}_{V_0}\circ(\varphi_{V_0}\circ\xi\circ\psi^{-1}_{U_0})\circ\psi_{U_0}\circ f\circ\psi_{U_0}^{-1}
		=\varphi_{W_0}\circ g^{\circ m'}\circ\varphi_{V_0}^{-1}\circ(\varphi_{V_0}\circ\xi\circ\psi_{U_0}^{-1})
		\end{equation}
		
		Firstly the Fatou components $U_0, W_0$ cannot be of geometrically attracting type or of parabolic type (since $\#F_f\cap P_f<\infty$ and \cite[Lemma 8.5, Theorem 10.15]{Mil06}).
		
		If $U_0$ is supattracting, then we have $\deg(f|_{\partial U_0})>1$. This implies $\deg(g^{\circ m}|_{V_0})<\deg(g^{\circ m'}|_{V_0})$. Since both $g^{\circ m}$ and $g^{\circ m'}$  carry $V_0$ onto $W_0$ and the fact that periodic Siegel cycles avoid critical points, the Fatou component $W_0$ must be of periodic superattracting type. According to Lemma \ref{dynamic_Fatou} (3) and \cite[Lemma 7.1]{BLM16}, we see that $\widetilde{\xi}_0(z)=\varphi_{V_0}\circ\xi\circ\psi^{-1}_{U_0}(z)=e^{i\theta}z$ is a rational rotation.
		
		If $U_0$ is a Siegel disk, then $\deg(g^{\circ m}|_{V_0})=\deg(g^{\circ m'}|_{V_0})$. There are two cases: $m=m'$ or $m\neq m'$.
		
		If the former one happens, clearly the orbit $V_0\to \cdots \to g^{\circ m-1}(V_0)\to W_0$ may hit Siegel disks or supattracting Fatou components of $g$. However, by Lemma \ref{dynamic_Fatou} (2) (3), we can finally deduce that $P_k\circ\wt{\xi_0}\circ P_{1,\theta}=P_k\circ\wt{\xi_0}$ on $\partial \mb{D}$ for some integer $k\geq 1$ and irrational number $0<\theta<1$ from \eqref{start}. It follows that $\wt{\xi_0}\circ P_{1,\theta}\circ\wt{\xi_0}^{-1}$ is one of the $k$ rational rotations $P_{1, \frac{1}{k}},\cdots,P_{1, \frac{k-1}{k}},P_{1, \frac{k}{k}}$. This is impossible as rotation numbers are invariant under conjugation.
		
		If the latter case happens, that is, $m\neq m'$. Then $W_0$ is a periodic Siegel disk of $g$ by a similar argument as above. The equation \eqref{start} gives
		$$P_k\circ\wt{\xi_0}\circ P_{1,\theta}=P_{1,\eta}\circ P_k\circ\wt{\xi_0}=P_k\circ P_{1,\frac{\eta}{k}}\circ\wt{\xi_0}$$
		for some integer $k\geq 1$ and irrational numbers $0<\theta,\eta<1$ from Lemma \ref{dynamic_Fatou}. Then $\wt{\xi_0}\circ P_{1,\theta}\circ\wt{\xi_0}^{-1}\circ P_{1,\frac{\eta}{k}}^{-1}$ is a rational rotation. It follows that $\wt{\xi_0}$ conjugates two irrational rotations on $\partial \mb{D}$. Then $\wt{\xi_0}$ is a rotation by Lemma \ref{lemma_rotation}.
		
		By induction, for $n=1,2$ and so on, we have
		\begin{equation}\label{lifts}
		H_n:=G_{n-1}\circ\widetilde{\xi}_{n-1}\circ F_{n}=\widetilde{G}_{n}\circ\widetilde{\xi}_n \text{ on }\partial \mathbb{D}.
		\end{equation}
		and so $\deg(H_n)=\deg(\widetilde{G}_{n})$.
		Note that on $\overline{\mathbb{D}}$ the mappings $G_{n-1}$, $\wt{\xi}_{n-1}$, $F_n$, $\widetilde{G}_n$ act as $z\mapsto z^\delta$ or $z\mapsto e^{2\pi i\theta}z$. Then there exists an analytic lift $$\widetilde{\xi}'_n:\overline{\mathbb{D}}\setminus\{0\}\to\overline{\mathbb{D}}\setminus\{0\}$$ of $H_n$ under the unbranched covering $\widetilde{G}_n:\overline{\mathbb{D}}\setminus\{0\}\to \overline{\mathbb{D}}\setminus\{0\}$ by \cite[Proposition 1.33]{Hat02}.
		We may also assume $\widetilde{\xi}'_n(x)=\widetilde{\xi}_n(x)$ for an $x\in \partial{\mathbb{D}}$. From \eqref{lifts}, both $\widetilde{\xi}_n'|_{\partial\mathbb{D}}$ and $\widetilde{\xi}_n$ are of lifts of $H_n|_{\partial\mathbb{D}}$. The unique lifting property \cite[Proposition 1.34]{Hat02} implies that they are the same. Thus we have an analytic extension $\widetilde{\xi}_n=\widetilde{\xi}'_n$ on $\mathbb{D}\setminus\{0\}$ and $\widetilde{\xi}_n(0)=0$. By \eqref{lifts} and induction, we know that $\widetilde{\xi}_n$ is a rotation.
	\end{proof}
	
	We still write $\widetilde{\xi}_n:\overline{\D}\to\overline{\D}$ after extension and modify $\xi$ on each Fatou component as follows,
	\[\xi(z)=\left\{
	\begin{array}{ll}
	\xi(z), & \hbox{if $z\in\mathcal{J}_f$,} \\[5pt]
	\varphi^{-1}_{V_n}\circ\widetilde{\xi}_n\circ\psi_{U_n}(z), & \hbox{if $z\in U_n$ for $U_n\in\text{Comp}(\mathcal{F}_f)$.}
	\end{array}
	\right.
	\]
	It is clear that $\xi$ is a quasiconformal mapping on the sphere and is conformal except on the Julia set $\mathcal{J}_f$. The locally porous property implies that $\mathcal{J}_f$ has measure zero. Thus $\xi$ is a M\"{o}bius transformation. The proof is complete.
\end{proof}

\noindent{\textit{Proof of Theorem \ref{mobius}}}. Suppose that $f$ is a semi-hyperbolic rational map whose Julia set is a carpet, then $\MJ_f$ is locally porous by Theorem \ref{local_porous}. If further the $\omega$-limit sets of the critical points of $f$ are disjoint from the elements of the peripheral circles of $\MJ_f$, then $\MJ_f$ is quasisymmetrically equivalent to a round carpet by Theorems \ref{thm-main-1} and \ref{thm-main-2}. According to Theorem \ref{qs_rigidity}, if $\sharp(\MF_f\cap\MP_f)<\infty$ and $\sharp(\MF_g\cap\MP_g)<\infty$, then any quasisymmetric homeomorphism $\xi:\MJ_f\to\MJ_g$ is the restriction of a M\"{o}bius transformation.
\hfill $\square$

\subsection{The group of quasisymmetries is finite}

For a given rational map $f$, we use $QS(\mathcal{J}_f)$ to denote the set of all the quasisymmetric homeomorphims that map $\MJ_f$ onto itself. It is easy to see that $QS(\mathcal{J}_f)$ forms a group which is nonempty since the identity belongs to $QS(\mathcal{J}_f)$.

\begin{thm}\label{finite_group}
	Let $f$ be the rational map as stated in Theorem \ref{qs_rigidity}. Then the quasisymmetric group $QS(\mathcal{J}_f)$ is finite.
\end{thm}
This theorem generalizes the result of \cite[Corollary 1.2]{BLM16}. Our strategy in the proof here differs from \cite[Corollary 1.2]{BLM16} on handling the situation that $\xi$ is a parabolic transformation.

\begin{proof}
	Without loss of generality, we assume that $\sharp(\MP_f \cap \mathcal{F}_f)$ is finite. Otherwise, we can do a surgery on the Fatou set such that $f$ is quasiconformally conjugate to a rational map, whose postcritical set is finite on the Fatou set, between their Julia sets (see \cite[\S 7]{BF14}). By Theorem \ref{qs_rigidity}, we know that $QS(\mathcal{J}_f)$ consists of the restriction of M\"{o}bius transformations.
	
	Firstly we claim that the group $QS(\mathcal{J}_f)$ is \emph{discrete}, i.e.,
	there exists $\delta>0$ such that
	$$\tu{inf}_{h\in QS(\mathcal{J}_f)\sm\{\tu{id}_{\mathcal{J}_f}\}}\left(\tu{max}_{z\in \mathcal{J}_f}\dist_\sigma(h(z),z)\right)\geq\delta.$$
	If not, there exists a sequence of distinct elements $\{h_k\}_{k\geq 1}\subseteq QS(\MJ_f)$ converging to $\tu{id}_{\mathcal{J}_f}$. Let $C_1,C_2$ and $C_3$ be three different peripheral circles. Then the Hausdorff distances between $C_i$ and $h_k(C_i)$ tends to zero as $k$ tends to $\infty$. Since all $h_k(C_i)$ are either disjoint or coincides for $k\geq 1$ and $1\leq i\leq 3$. It follows that $h_k(C_i)=C_i$ for sufficiently large $k$. Note that for loxodromic or parabolic transformation $g$, the sequence $\{g^{\circ k}\}_{k\geq 1}$ converges locally uniformly to a constant on $\EC\setminus\text{Fix}(g)$, where $\text{Fix}(g)$ is the set of all the fixed points of $g$. Thus $h_k$ can only be elliptic (for the classification of M\"{o}bius transformation, see \cite[p.\,67]{Bea83}). However, an elliptic element can not keep three disjoint Jordan domains fixed. Thus $h_k=\text{id}$ for sufficiently large $k$, which is a contradiction. The claim follows.
	
	Now letting $\xi\neq\tu{id}_{\mathcal{J}_f}$ in $QS(\mathcal{J}_f)$, we claim that $\xi$ cannot be loxodromic and parabolic. Indeed, otherwise, let $p$ be a fixed point of $\xi$ (repelling fixed point in the loxodromic case). Since $\xi$ keeps $\mathcal{J}_f$ invariant, it follows that $p$ is buried in $\mathcal{J}_f$. Let $U_1$ be a small clean neighborhood of $p$. Choose a clean domain $U_2\subseteq U_1$ with $U_2\cap\mathcal{J}_f\neq \emptyset$ such that
	$$\xi^{-k}(U_2)\subseteq U_1, k\geq 0\text{ and }\xi^{-k}(U_2)\to p\text{ as }k\to\infty.$$
	Indeed, such $U_2$ exists obviously in the loxodromic case. Since a parabolic M\"{o}bius transformation has a repelling petal near $p$, hence a clean domain in the petal is as required.
	
	In the following, we apply the same argument as the proof of Theorem \ref{qs_rigidity}. Let $E$ be a finite set of $\mathcal{F}_f$ containing at least three points such that $E\cap U_1=\emptyset$. Given $k\geq 1$, we set
	$$m(k):=\text{max}\{m;f^{\circ m}\circ\xi^{-k}(U_2)\cap E=\emptyset\}.$$
	Then as in proof of Theorem \ref{qs_rigidity}, the sequence of holomorphic mappings $\{f^{\circ m(k)}\circ\xi^{-k}|_{U_2}\}_{k\geq 1}$ is a normal family.
	Then there exists a subsequence $\{h_k:=f^{\circ m_k}\circ\xi^{-n_k}|_{U_2}\}$ converging locally uniformly to a non-constant holomorphic mapping $h$. A similar argument as in Theorem \ref{qs_rigidity} shows that
	$h_k=h_{k+1}$ on $U_2$ for sufficiently large $k$. Thus
	$$f^{\circ m}=f^{\circ m'}\circ\xi^{\circ l}\text{ on }U\cap \mathcal{J}_f$$
	for some $m\geq 1, m'\geq 1,l\geq 1$ and clean domain $U$. By the local uniqueness of holomorphic mappings, we have $f^{\circ m}=f^{\circ m'}\circ \xi^{\circ l}$ on the whole sphere and so $m=m'$ (by considering the degrees).
	
	Take a point $q\in\EC$ such that $f^{-m}(q)$ does not contain any fixed point of $\xi^{\circ l}$. Since $f^{\circ m}=f^{\circ m}\circ \xi^{\circ l}$, it follows that $\xi^{\circ l}(f^{-m}(q))\subseteq f^{-m}(q)$, which is impossible if $\xi$ is loxodromic or parabolic.
	
	Thus $QS(\mathcal{J}_f)$ consists of only elliptic transformations. By \cite[Theorem 4.3.7]{Bea83}, elements of $QS(\mathcal{J}_f)$ share a common fixed point. Since $QS(\mathcal{J}_f)$ is discrete, it follows that $QS(\mathcal{J}_f)$ is finite.
\end{proof}

\vskip0.2cm
\noindent{\textit{Proof of Theorem \ref{group_finite}}}. Suppose that $f$ is a semi-hyperbolic rational map whose Julia set is a carpet and the $\omega$-limit sets of the critical points of $f$ are disjoint from the elements of the peripheral circles of $\MJ_f$, then $\MJ_f$ is locally porous and quasisymmetrically equivalent to a round carpet by Theorems \ref{local_porous}, \ref{thm-main-1} and \ref{thm-main-2}. Therefore, $QS(\MJ_f)$ is finite according to Theorem \ref{finite_group}.
\hfill $\square$

\section{An example of postcritically-infinite carpet Julia set}\label{example}

In this section, we will construct a carpet Julia set of a rational map such that it is quasisymmetrically equivalent to a round carpet. However, the rational map $f$ is semi-hyperbolic and has an infinite critical orbit in $\MJ_f$.

Let $q:\mathbb{R}/\mathbb{Z}\to\mathbb{R}/\mathbb{Z}$ be the \textit{doubling map} defined by $q(t)=2t\,\textup{mod}\, \mathbb{Z}$ and
\begin{equation}
l(t):=
\left\{                         
\begin{array}{ll}               
2t &~~~~\text{if}~0\leq t<1/2, \\             
2-2t &~~~~\text{if}~1/2\leq t<1.    
\end{array}                     
\right.                         
\end{equation}
be the length of the component $(\mathbb{R}/\mathbb{Z})\setminus\{t,1-t\}$ containing 0. Let $T(t)=\textup{min}\{2t,2-2t\}$ be the \textit{tent map} on the interval $[0,1]$. One can easily check that
\begin{equation}\label{communi}
T\circ l(t)=l\circ q(t)
\end{equation}
for all $t\in[0,1]$. Actually, the map $l(t)$ is equal to $T(t)$. We use these notations here by following Tiozzo's paper \cite[p.\,675]{Tio15}.

\begin{lema}\label{rational}
	Let $0\leq \alpha\leq1$ be a real number. Then $\alpha$ is rational if and only if $\alpha$ is (pre-)periodic under the iteration of the doubling map $q$.
\end{lema}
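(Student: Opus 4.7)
The plan is to prove both directions by a direct orbit-counting argument for the doubling map $q(t)=2t\bmod 1$.

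For the forward implication, suppose $\alpha=p/N\in[0,1]$ is rational in lowest terms. The key observation is that the doubling map preserves the set $S_N:=\{k/N:0\leq k<N\}$ in the sense that $q(S_N)\subseteq S_N$, because $2\cdot(k/N)\bmod 1$ is again of the form $k'/N$. Hence every forward iterate $q^{\circ n}(\alpha)$ lies in the finite set $S_N$, and therefore some two iterates must coincide, i.e.\ $\alpha$ is (pre-)periodic. One can even be more precise: writing $N=2^a m$ with $m$ odd, after $a$ iterations we land in the set of fractions with odd denominator dividing $m$, on which $q$ is a bijection (since $2$ is a unit mod $m$); thus the tail is actually periodic. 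I would include only the short finite-set argument, since periodicity versus preperiodicity is irrelevant here.

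For the backward implication, suppose $\alpha$ is preperiodic under $q$, i.e.\ there exist integers $n>m\geq 0$ with $q^{\circ n}(\alpha)=q^{\circ m}(\alpha)$. Unwinding the definition of $q$, this means $2^n\alpha\equiv 2^m\alpha\pmod{1}$, so $(2^n-2^m)\alpha\in\Z$. Since $2^n-2^m\neq 0$, we conclude $\alpha\in\Q$.

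I expect no serious obstacle: the proof is entirely elementary, and the only point needing a little care is to phrase the first direction so that both the strictly periodic and the preperiodic cases are covered by the single pigeonhole statement on the finite set $S_N$. Since the lemma is stated as ``(pre-)periodic,'' the cleaner route is the finite-invariant-set argument rather than separating the odd-denominator case from the $2^a m$ decomposition.
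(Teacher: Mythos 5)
Your proof is correct, and the backward direction (preperiodic $\Rightarrow$ rational) is essentially identical to the paper's: both extract $(2^{k_1}-2^{k_2})\alpha\in\Z$ from a coincidence of iterates. For the forward direction you diverge from the paper in a way worth noting. The paper reduces to the case $\alpha=m/n$ in lowest terms with $n$ odd, observes that $q$ restricted to $S=\{0,1/n,\dots,(n-1)/n\}$ is injective (using that $2$ is invertible mod $n$), hence a bijection of a finite set, and concludes that every element of $S$ is \emph{strictly} periodic; the preperiodic case for general denominators $N=2^a m$ is then implicit in the reduction. You instead observe that $q$ maps $S_N=\{k/N:0\leq k<N\}$ into itself for any $N$, and invoke the pigeonhole principle to conclude eventual periodicity directly, without splitting off the power-of-two part of the denominator. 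Your route is shorter and proves exactly what the lemma asserts (``(pre-)periodic''), at the cost of not exhibiting the cleaner structure (bijectivity on odd-denominator fractions, strict periodicity of the tail) that the paper's argument yields. Since the lemma only needs preperiodicity, your simplification is entirely legitimate. One small stylistic point: when you write ``suppose $\alpha$ is preperiodic,'' you should include the strictly periodic case ($m=0$) under the same umbrella, which your inequality $n>m\geq 0$ already does — so no gap there.
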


\begin{proof}
	Obviously, this lemma holds for $\alpha=0$ or $1$. Hence we assume that $0<\alpha<1$. If $\alpha$ is (pre-)periodic, then there exist two different integers $k_1,k_2\geq 0$ such that $q^{\circ k_1}(\alpha)\equiv q^{\circ k_2}(\alpha)\,\textup{mod}\, \mathbb{Z}$. This means that there exists an integer $k_3$ such that $2^{k_1}\alpha=2^{k_2}\alpha+k_3$. Then $\alpha=k_3/(2^{k_1}-2^{k_2})$ is a rational number.
	
	Conversely, we only need to prove that, if $\alpha=m/n$ is a rational number with the simplest expression, where $n$ is odd, then $\alpha$ is periodic under $q$. Consider the restriction of $q$ on the set $S:=\{0,1/n,\cdots,(n-1)/n\}$:
	\begin{equation*}
	h:=q|_S\,:\,\frac{t}{n}\mapsto \frac{2\,t\,\textup{mod } n}{n}.
	\end{equation*}
	We claim that $h$ is injective. Indeed, if $h(t_1/n)=h(t_2/n)$, then $2(t_1-t_2)=k\,n$ holds for some integer $k$. Since $n$ is odd, it follows that $k$ is even and $|t_1-t_2|=|\frac{k}{2}|\cdot n\leq n-1$. This means that $k=0$ and $t_1=t_2$. The finiteness of the cardinal number of $S$ implies every element in $S$ is pre-periodic under $h$. Then each element in $S$ is periodic. Otherwise, there will be at least two elements which are mapped to the same element. This contradicts with that $h$ is a injection.
\end{proof}

In the following, based on the combinatorial theory of quadratic polynomials and renormalization theory,  we shall construct a critically-infinite semi-hyperbolic McMullen map whose Julia set is quasisymmetrically equivalent to a round carpet.

\begin{thm}\label{existence-mcm}
	There exists a suitable parameter $\lambda>0$ such that the McMullen map
	\begin{equation}
	f_\lambda(z)=z^d+\lambda/z^d
	\end{equation}
	is semi-hyperbolic, the critical orbit in the Julia set is infinite and the corresponding Julia set is quasisymmetrically equivalent to a round Sierpi\'{n}ski carpet, where $d\geq 3$.
\end{thm}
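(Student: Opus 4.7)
The plan is to work with real $\lambda>0$ and exploit the real one-dimensional dynamics of $f_\lambda$ on $\mathbb{R}^+$, then verify the hypotheses of Theorem \ref{thm-main-1} for a carefully chosen parameter. For $\lambda>0$ and $d\geq 3$, $f_\lambda$ preserves the positive real axis; the real critical point $c_0=\lambda^{1/(2d)}$ has orbit lying on $(0,\infty)$ with $f_\lambda(c_0)=2\sqrt{\lambda}$, and on an appropriate $f_\lambda$-invariant interval $I_\lambda$ containing $c_0$ the restriction $g_\lambda:=f_\lambda|_{I_\lambda}$ is a unimodal map. The $2d$-fold symmetry $f_\lambda(\zeta z)=\zeta^{-d}f_\lambda(z)$ with $\zeta=e^{i\pi/d}$ identifies the forward orbit of each remaining critical point $c_k=\zeta^k c_0$ with a rotated copy of the real orbit, so it suffices to control the latter.

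First I would locate the parameter interval $(\lambda_1,\lambda_2)\subset(0,\infty)$ in which $J_{f_\lambda}$ is known to be a Sierpi\'{n}ski carpet: the range in which the critical orbit stays in a bounded invariant compactum disjoint from the trap door, and in which the basin at infinity and the trap door have disjoint closures. Within this range, standard Milnor--Thurston kneading theory provides a continuous kneading invariant $\lambda\mapsto\kappa(\lambda)$ for the unimodal family $\{g_\lambda\}$, together with a semiconjugacy onto a tent map $T$. The identity $T\circ l=l\circ q$ from \eqref{communi} then transports the symbolic dynamics of $T$ onto that of the doubling map $q$, so that kneading sequences of $g_\lambda$ correspond, via $l$, to points of $[0,1]$ under $q$.

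Next, I would use an intermediate-value argument in the monotone family $\{g_\lambda\}$ to select $\lambda^\ast\in(\lambda_1,\lambda_2)$ such that $\kappa(\lambda^\ast)$ corresponds to an irrational angle $\alpha$ of bounded type (a quadratic irrational suffices). By Lemma \ref{rational}, the $q$-orbit of $\alpha$ is infinite and non-preperiodic; via the semiconjugacy, the critical orbit of $g_{\lambda^\ast}$ is infinite and accumulates on a Cantor set $K\subset\mathbb{R}\cap J_{f_{\lambda^\ast}}$ that does not contain $c_0$, so $c_0$ is non-recurrent. The bounded-type hypothesis on $\alpha$ forbids arbitrarily close returns of the orbit to the endpoints of $I_\lambda$, giving a uniform gap between $K$ and the real traces of the boundaries of the basin at infinity and of the trap door. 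By rotational symmetry this propagates to all critical points, and $\lambda^\ast$ can additionally be chosen outside the (discrete) parabolic locus. Hence $f_{\lambda^\ast}$ is semi-hyperbolic, its Julia set is a carpet, and the $\omega$-limit sets of its critical points are disjoint from the boundaries of all periodic Fatou components, so Theorem \ref{thm-main-1} applies and yields the desired quasisymmetric equivalence with a round carpet.

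The principal obstacle is the step above: merely choosing an aperiodic kneading value does not by itself prevent the critical $\omega$-limit set from accumulating on $\partial B_{\lambda^\ast}$ or on the trap door. The bounded-type condition on $\alpha$ is what forces the required separation, and translating this diophantine control of the symbolic model back into geometric control of the Cantor attractor $K$ is the delicate point, essentially requiring renormalization of the unimodal restriction $g_\lambda$ together with uniform a priori bounds for its real first return maps. Once those bounds are in place, the verification of the hypotheses of Theorem \ref{thm-main-1} is routine.
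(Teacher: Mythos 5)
Your plan shares with the paper the underlying identity $T\circ l=l\circ q$ and the idea of encoding the critical orbit by an angle under the doubling map, but it diverges in two essential respects, one of which is a genuine gap.

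First, the route is different. You propose to build kneading theory directly for the unimodal restriction $g_\lambda=f_\lambda|_{I_\lambda}$ of the McMullen map and to run an intermediate value argument over $\lambda$. The paper instead uses Steinmetz's renormalization result: there is a Mandelbrot copy $\mathcal{M}\subset\Lambda_d$ with a homeomorphism $\Phi:\mathcal{M}\to M$ under which $f_\lambda$, near the small Julia set, is quasiconformally conjugate to $P_{\Phi(\lambda)}(z)=z^2+\Phi(\lambda)$, and the $\beta$ fixed point of $P_{\Phi(\lambda)}$ is sent to the unique intersection of the small Julia set with $\partial B_\infty$ \cite[Lemma 4.1]{QXY}. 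This lets the paper work entirely in the standard quadratic family and import external-ray combinatorics, parameter-ray impressions, and real-slice results for $M$ directly, instead of re-establishing monotonicity and fullness of kneading for the specific family $g_\lambda$. Your route would require these to be proved for $g_\lambda$, plus a verification that the real symbolic model sees the Sierpi\'nski carpet structure — not impossible, but a considerable additional burden that the paper's detour through $\mathcal{M}$ avoids.

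Second, and more seriously, the gap you flag yourself is a real one and your proposed remedy (bounded type $\alpha$, quadratic irrational, a priori bounds, renormalization of $g_\lambda$) does not fill it. Irrationality of the angle gives only that the critical orbit is infinite; it does not give non-recurrence of the critical point, nor does it keep $\omega(0)$ away from the landing point of the zero ray. In real unimodal dynamics there are plenty of aperiodic (indeed infinitely renormalizable) kneading invariants for which the critical point is recurrent, so "choose $\alpha$ irrational" plus "bounded type" is not a workable selection principle here. What the paper actually does is construct a very specific $\theta\in(0,1/2)$ with $\theta,\theta'\in\mathcal{R}$, $\theta'<\theta$, and the sharp inequalities $l(q(\theta'))<l(q^{\circ n}(\theta))<l(\theta')$ for all $n\geq 2$. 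These inequalities place every dynamical ray $R_c(\pm q^n(\theta))$, $n\geq 2$, inside the explicit combinatorial sector $H$ bounded by $R_c(\pm\theta')$ and $R_c(\pm q(\theta'))$; hence $\omega(0)\subset\overline{H}$, while the critical value $c$ (landing point of $R_c(\pm\theta)$) and the $\beta$ fixed point (landing point of $R_c(0)$) lie in distinct components of the complement of $\overline{H}$. This simultaneously yields non-recurrence ($c\notin\omega(0)$, hence $0\notin\omega(0)$) and the separation of $\omega(0)$ from $\partial B_\infty$ that Theorem \ref{thm-main-2} needs. That two-sided combinatorial confinement is precisely what "bounded type" fails to supply, and it is the heart of the construction; without it your argument does not establish semi-hyperbolicity nor the hypothesis on $\omega$-limit sets, and the appeal to Theorem \ref{thm-main-1} is unsupported.

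A minor additional point: the set of parabolic parameters on the boundary of a Mandelbrot copy is not discrete (it is dense in the boundary), so the phrase "choosing $\lambda^\ast$ outside the discrete parabolic locus" is inaccurate; the right mechanism, as in the paper, is that irrationality of $\theta$ already forces $c$ off the parabolic locus.
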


\begin{proof}
	We divide the construction into three main steps as following.
	
	\textbf{Step 1.}
	For a given irrational number $\alpha\in(0,1)$, one can write it as an infinite binary sequence $\alpha=0.a_1a_2a_3\cdots$ by Lemma $\ref{rational}$, where $a_i\in\{0,1\}$. Define a binary number
	\begin{equation*}
	\theta:=0.0\underbrace{1\cdots 1}_{100}\underbrace{0\cdots0}_{b_1}\underbrace{1\cdots1}_{b_2}\underbrace{0\cdots0}_{b_3}\cdots,
	\end{equation*}
	where $b_i=a_i+1$ for $i\geq 1$. Then $1\leq b_i\leq 2$ and we have:
	
	$\bullet$ The number $\theta\in(0,1/2)$ is irrational. If not, by Lemma $\ref{rational}$, the number $\theta$ will be eventually periodic under the iteration of the doubling map $q$. Then there exist $m\geq 2$ and $p\geq 2$ such that
	\begin{equation*}
	q^{\circ n}(\theta)=0.\,\overline{\underbrace{1\cdots 1}_{b_m}\underbrace{0\cdots 0}_{b_{m+1}}
		\,\cdots\,\underbrace{1\cdots 1}_{b_{m+p-2}}\underbrace{0\cdots 0}_{b_{m+p-1}}},
	\end{equation*}
	where $n=101+b_1+\cdots+b_{m-1}$. This means that the sequence $b_m$, $\cdots$, $b_{m+p-1}$, $b_{m+p}$, $\cdots$, $b_{m+2p-1}$, $\cdots$, is periodic with period $p$. Therefore, the sequence $a_m$, $\cdots$, $a_{m+p-1}$, $a_{m+p}$, $\cdots$, $a_{m+2p-1}$, $\cdots$ is also periodic with period $p$ since $a_i=b_i-1$ for each $i$. Then $\alpha=0.a_1\cdots a_{m-1}\overline{a_m a_{m+1}\cdots a_{m+p-1}}$ is a rational number by Lemma \ref{rational}. This is a contradiction since $\alpha$ is irrational.
	
	$\bullet$ Define a rational number with the binary form
	\begin{equation*}
	\theta':=0.\overline{0\underbrace{1\cdots1}_{99}}.
	\end{equation*}
	Then $0<\theta'<\theta<1/2$ and $\theta',\theta$ are very close to $1/2$. We have
	\begin{equation*}
	l(\theta')=0.\underbrace{1\cdots1}_{99}\overline{0\underbrace{1\cdots1}_{99}} \text{~and~}
	l(\theta)=0.\underbrace{1\cdots 1}_{100}\underbrace{0\cdots0}_{b_1}\underbrace{1\cdots1}_{b_2}\underbrace{0\cdots0}_{b_3}\cdots.
	\end{equation*}
	For any $n\geq 2$, by a direct calculation, it is easy to check that
	\begin{equation}\label{inequl}
	0<l(q(\theta))<l(q(\theta'))<l(q^{\circ n}(\theta)),\,l(q^{\circ n}(\theta'))<l(\theta')<l(\theta).
	\end{equation}
	
	$\bullet$ Define a set
	\begin{equation}\label{defi-R}
	\mathcal{R}:=\{t\in\mathbb{R}/\mathbb{Z}: T^{\circ n}(l(t))\leq l(t) \text{~for all~} n\geq 0\}.
	\end{equation}
	By \eqref{communi} and \eqref{inequl}, we have $\theta',\,\theta\in\mathcal{R}$.
	\vskip 0.28cm
	\textbf{Step 2.}
	Construct a quadratic polynomial $P_c(z)=z^2+c$ with the following properties:
	
	(1) The critical orbit $\MO^+_{P_c}(0)=\{P_c^{\circ n}(0):n\geq 0\}$ is contained in the Julia set of $P_c$ and the cardinal number of $\MO^+_{P_c}(0)$ is infinite.
	
	(2) The critical point $0$ is non-recurrent and the $\omega$-limit set of $0$ does not contain the $\beta$ \emph{fixed point} of $P_c$. Recall that a $\beta$ fixed point of a polynomial is the landing point of \emph{dynamical external ray} with angle zero.
	
	The set $\mathcal{R}$ defined in \eqref{defi-R} is exactly the set of all angles of parameter rays whose \emph{prime-end impression} intersects the subset $\mathbb{R}\cap M=[-2,1/4]$ of the Mandelbrot set $M$ (see \cite[Proposition 8.4, p.\,677]{Tio15}). By \cite[Theorem 3.3]{Zak03}, there exists a real number $c:=c(\theta)\in[-2,1/4]$ in the boundary of the Mandelbrot set such that $c$ is contained in the prime-end impression of the parameter rays $R_M(\pm\theta)$ since $\theta\in\mathcal{R}$. Moreover, on the dynamical plane, the dynamical rays $R_c(\pm\theta)$ land at the critical value $c$ of $P_c(z)=z^2+c$.
	
	In fact, such $c$ is unique. Otherwise, suppose that there exists another $c'\neq c$, such that $c'$ is contained in the prime-end impression of the parameter rays $R_M(\pm\theta)$. By the density of hyperbolic parameters in $\mathbb{R}\cap M$ (see \cite{GS97} and \cite{Lyu97}), there is a real hyperbolic parameter $\widetilde{c}$ between $c$ and $c'$ with a pair of rational parameter rays landing at it. This means that $c'$ and $c$ cannot lie in the same prime-end impression of $R_M(\pm\theta)$ at the same time, which is a contradiction.
	
	Now we prove that the quadratic polynomial $P_c$ is the desired map. Again by \cite[Proposition 8.4]{Tio15}, the parameter rays $R_M(\pm\theta')$ land at a parabolic parameter $c_0\in\mathbb{R}$ since $\theta'\in\mathcal{R}$ is a rational number. These two rays together with their landing point $R_M(\theta')\cup R_M(-\theta')\cup\{c_0\}$ bounds a \emph{wake} $W\ni\{-2\}$ with the following property: The quadratic map $P_{\xi}(z)=z^2+\xi$ has a repelling periodic point with exactly two dynamical rays $R_{\xi}(\pm\theta')$ landing at if and only if $\xi\in W$ (see \cite[Theorem 1.2]{Mil00}). By the construction in Step 1, we have $0<\theta'<\theta<1/2$. Then $R_M(\pm\theta)\cup\{c\}\subseteq W$ and hence $R_c(\pm\theta')$ land at a repelling periodic point of $P_{c}$ on the real line. Also, the image $R_c(\pm q(\theta'))$ of $R_c(\pm\theta')$ land at some point on the real line.
	
	Denote by $H$ the simply connected domain bounding by the four dynamical rays $R_c(\pm\theta')$ and $R_c(\pm q(\theta'))$. The two dynamical rays $R_c(\theta)$ and $R_c(0)$ are contained in different components of $\mathbb{C}\setminus\overline{H}$. Moreover, all the dynamical rays $R_c(\pm q^n(\theta))$, where $n\geq 2$, are contained in $H$ by the definition of $\mathcal{R}$ and $\theta'$. This means that the collection of their landing points $\bigcup_{n\geq2}P_c^{\circ n}(c)$ are contained in $H$. Therefore, the critical value $c$ (which is the landing point of $R_c(\pm\theta)$) and the $\beta$ fixed point of $P_c$ are not contained in the $\omega$-limit set of the origin.
	
	\textbf{Step 3.} Construct the semi-hyperbolic rational map $f_\lambda$ whose Julia set is quasisymmetrically equivalent to a round carpet. Consider the McMullen map $f_\lambda(z)=z^d+\lambda/z^d$, where $\lambda\in\C\setminus\{0\}$ and $d\geq 3$. The \textit{free} critical points of $f_\lambda$ are $2d$-th unit roots of $\lambda$. They are either escaping to $\infty$ or have bounded orbits at the same time. The \emph{non-escaping locus} of $f_\lambda$ is defined as
	\begin{equation*}
	\Lambda_d:=\{\lambda\in\mathbb{C}\setminus\{0\}: \textup{The free critical orbits of }f_\lambda \textup{ are not attracted by } \infty \}.
	\end{equation*}
	See left picture in Figure \ref{parameter} for the non-escaping locus of $f_\lambda$ when $d=3$.
	
	\begin{figure}[!htpb]
		\setlength{\unitlength}{1mm}
		\centering
		\includegraphics[width=60mm]{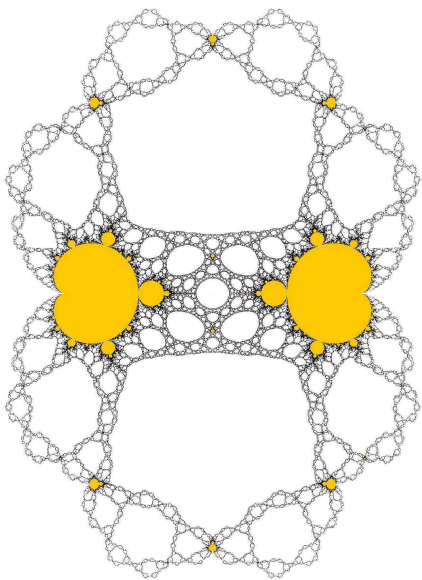}
		\caption{The non-escaping locus $\Lambda_3$ of $f_\lambda$ (the left picture) contains infinitely many homeomorphic copies of the Mandelbrot set.}
		\label{parameter}
	\end{figure}
	
	According to \cite[Theorem 9, p.\,178]{Ste06}, there exists exactly one copy $\mathcal{M}$ of the Mandelbrot set of \textit{order} one in $\Lambda_d\cap\{\lambda\in\C^*:|\arg(\lambda)|<\pi/(d-1)\}$ (Note that there exists a semiconjugacy between $f_\lambda$ and the rational map discussed in \cite[Theorem 9]{Ste06}). The copy $\mathcal{M}$ is symmetric with respect to the positive real axis. Moreover, there exists a homeomorphism $\Phi:\mathcal{M}\to M$ such that, for every $\lambda\in\mathbb{R}^{+}\cap \mathcal{M}=\R^+\cap\Lambda_d$, there is a corresponding parameter $\Phi(\lambda)\in[-2,1/4]$ and the Julia set $\MJ_{f_\lambda}$ contains an embedded set $\tilde{\MJ}_{P_{\Phi(\lambda)}}$, which is homeomorphic to the  Julia set of the quadratic polynomial $P_{\Phi(\lambda)}(z)=z^2+\Phi(\lambda)$. Moreover, the restriction of $f_\lambda$ in a neighborhood of $\tilde{\MJ}_{P_{\Phi(\lambda)}}$ is quasiconformally conjugated to the restriction of $P_{\Phi(\lambda)}$ in a neighborhood of $\MJ_{P_{\Phi(\lambda)}}$.
	
	Let $\lambda_0=\Phi^{-1}(c)\in\mathcal{M}\cap\R^+$, where $c=c(\theta)\in (-2,1/4)$ is the real parameter on the boundary of the Mandelbrot set determined in Step 2. By the symmetry of McMullen maps, all $2d$ free critical points of $f_{\lambda_0}$ are non-recurrent and they have infinite forward orbits. This means that $f_{\lambda_0}$ is semi-hyperbolic (and not sub-hyperbolic). Let $B_\infty$ be the immediate attracting basin of $\infty$ of $f_{\lambda_0}$. Then $\tilde{\MJ}_{P_{\Phi(\lambda_0)}}\cap B_\infty=\{z_{\lambda_0}\}$, where $z_{\lambda_0}$ is the image of the $\beta$ fixed point of $\MJ_{P_c}$ under the quasiconformal conjugacy stated above \cite[Lemma 4.1]{QXY12}. Note that $B_\infty$ is the unique periodic Fatou component of $f_{\lambda_0}$, it follows that the $\omega$-limit sets of the critical points of $f_{\lambda_0}$ are disjoint from the periodic Fatou component of $f_{\lambda_0}$ by the construction of $P_c$.
	
	By \cite[Lemma 4.4]{QXY12}, the Julia set of $f_{\lambda_0}$ is a Sierpi\'{n}ski carpet. By Theorem \ref{thm-main-2}, the peripheral circles of $\MJ_{f_{\lambda_0}}$ are uniform quasicircles and uniformly relatively separated. By Bonk's criterion (\cite[Corollary 1.2]{Bon11}), the Julia set of $f_{\lambda_0}$ is quasisymmetrically equivalent to a round carpet, as desired.
\end{proof}

\begin{rmk}
	One can refer \cite{GZ15} for the study of the generalization of the first and second steps in the proof of Theorem \ref{existence-mcm}.
\end{rmk}

\noindent\textbf{Acknowledgements.}
We would like to thank Yin Yongcheng for useful discussions and the referee for the careful reading and useful suggestions. The first author was supported by the NSFC under grant Nos. 11271074, 11671091 and 11731003. The second author was supported by the NSFC under grant Nos. 11401298 and 11671092. The third author was supported by the NSFC under grant No. 11471317.



\begin{thebibliography}{99}

\bibitem[AIM09]{AIM09}K. Astala, T. Iwaniec, G. M. Martin, \textit{Elliptic partial differential equations and quasiconformal mappings in the plane}, Princeton Univ. Press, Princetion, NJ, 2009.

\bibitem[Bea83]{Bea83}A. F. Beardon, \emph{The geometry of discrete groups}, Springer, New York, 1983.

\bibitem[Bon11]{Bon11}M. Bonk, Uniformization of Sierpi\'{n}ski carpets in the plane, \textit{Invent. Math.} \textbf{186} (2011), 559-665.

\bibitem[BLM16]{BLM16}M. Bonk, M. Lyubich and S. Merenkov, Quasisymmetries of Sierpi\'{n}ski carpet Julia sets, \textit{Adv. Math.} \tb{301} (2016), 383-422.

\bibitem[Bou97]{Bou97}M. Bourdon, Immeubles hyperboliques, dimension conforme et rigidit\'{e} de Mostow, \textit{Geom. Funct. Anal.} \textbf{7} (1997), 245-268.

\bibitem[BF14]{BF14}B. Branner and N. Fagella, \emph{Quasiconformal Surgery in Holomorphic Dynamics}, Cambridge University Press, New York, 2014.

\bibitem[BP02]{BP02} M. Bourdon and H. Pajot, Quasiconformal geometry and hyperbolic geometry, in \emph{Rigidity in dynamics and geometry} (Cambridge, 2000), pages 1-17, Springer, Berlin, 2002.

\bibitem[DFGJ14]{DFGJ14}R. L. Devaney, N. Fagella, A. Garijo and X. Jarque, Sierpi\'{n}ski curve Julia sets for quadratic rational maps, \textit{Ann. Acad. Sci. Fenn. Math.} \textbf{39} (2014), 3-22.

\bibitem[DLU05]{DLU05}R. L. Devaney, D. Look and D. Uminsky, The escape trichotomy for singularly perturbed rational maps, \textit{Indiana Univ. Math. J.} \textbf{54} (2005), 1621-1634.

\bibitem[GS97]{GS97}J. Graczyk and G. \'{S}wi\c{a}tek, Generic hyperbolicity in the logistic family, \textit{Ann. Math.} \textbf{146} (1997), 1-52.

\bibitem[GZ15]{GZ15}Y. Gao and J. Zeng, Non-recurrent parameter rays of the Mandelbrot set, arXiv: math.DS/1512. 08078, 2015.

\bibitem[Hat02]{Hat02}A. Hatcher, \emph{Algebraic topology,} Cambridge Univ. Press, Cambridge, 2002.

\bibitem[HP12]{HP12}P. Ha\"{\i}ssinsky and K. Pilgrim, Quasisymmetrically inequivalent hyperbolic Julia sets, \textit{Revista Math. Iberoamericana}, \textbf{28} (2012), 1025-1034.

\bibitem[Kle06]{Kle06}B. Kleiner. The asymptotic geometry of negatively curved spaces: uniformization, geometrization and rigidity, In \emph{International Congress of Mathematicians. Vol. II}, pages 743-768. Eur. Math. Soc., Z\"{u}rich, 2006.

\bibitem[KL09]{KL09}J. Kahn and M. Lyubich, Quasi-Additivity Law in conformal geometry, \textit{Ann. Math.} \textbf{169} (2009), 561-193.

\bibitem[LV73]{LV73}O. Lehto and K. I. Virtanen, \textit{Quasiconformal Mappings in the Plane}, Springer Verlag, Berlin, Heidelberg, New York, 1973.

\bibitem[Lev90]{Lev90} G. Levin, Symmetries on Julia sets (Russian), \textit{Mat. Zametki} \textbf{48} (1990), 72-79, 159; translation in Math. Notes \textbf{48} (1990), 1126-1131 (1991).

\bibitem[Lev01]{Lev01} G. Levin, Letter to the editors: ``Symmetries on Julia sets" (Russian), Mat. Zametki \textbf{69} (2001), 479-480; translation in Math. Notes \textbf{69} (2001), 432-433.

\bibitem[LP97]{LP97}G. Levin and F. Przytycki, When do two rational functions have the same Julia set? \textit{Proc. Amer. Math. Soc.} \textbf{125} (1997), 2179-2190.

\bibitem[Lyu97]{Lyu97}M. Lyubich, Dynamics of quadratic polynomials I-II, \textit{Acta Math.} \textbf{178} (1997), 185-297.

\bibitem[Ma\~{n}93]{Man93} R. Ma\~{n}\'{e}, On a lemma of Fatou, \textit{Bol. Soc. Bras. Mat.} \textbf{24} (1993), 1-11.

\bibitem[McM94]{McM94}C. T. McMullen, \textit{Complex Dynamics and Renormalization}, Ann. Math. Studies \textbf{135}, Princeton U. Press, Princeton, NJ, 1994.

\bibitem[McM95]{McM95}C. T. McMullen, The classification of conformal dynamical system. In \emph{Current developments in mathematics, 1995 (Cambridge, MA),} pages 323-360. Internat. Press, Cambridge, MA 1994.

\bibitem[Mer12]{Mer12}S. Merenkov, Planar relative Schottky sets and quasisymmetric maps, \textit{Proc. Lond. Math. Soc.} \textbf{104} (2012), 455-485.

\bibitem[Mer14]{Mer14}S. Merenkov, Local rigidity of Schottky maps, \textit{Proc. Amer. Math. Soc.} \textbf{142} (2014), no. 12, 4321-4332.

\bibitem[Mil93]{Mil93} J. Milnor, Geometry and dynamics of quadratic rational maps, with an appendix by J. Milnor and L. Tan, \textit{Exper. Math.} \textbf{2} (1993), Vol 1, 37-83.

\bibitem[Mil00]{Mil00} J. Milnor, Periodic orbits, externals rays and the Mandelbrot set: An expository account, in ``\textit{G\'{e}om\'{e} Complexe et Syst\`{e}mes Dynamiques, Colloque en l'honneur d'Adrien Douady}'', \textit{Ast\'{e}risque}, \textbf{261} (2000), 277-333.

\bibitem[Mil06]{Mil06} J. Milnor, \textit{Dynamics in One Complex Variable}, 3rd ed., Princeton Univ. Press, Princeton, NJ, 2006.

\bibitem[Moo25]{Moo25}R. L. Moore, Concerning upper semicontinuous collections of compacta, \emph{Trans. Amer. Math. Soc.} \textbf{27} (1925), 416-426.

\bibitem[QWY12]{QWY12}W. Qiu, X. Wang and Y. Yin, Dynamics of McMullen maps, \textit{Adv. Math.} \textbf{229} (2012), 2525-2577.

\bibitem[QXY12]{QXY12}W. Qiu, L. Xie and Y. Yin, Fatou components and Julia sets of singularly perturbed rational maps with positive parameter, \textit{Acta Math. Sin. (Engl. Ser.)} \textbf{28} (2012), 1937-1954.

\bibitem[QYY16]{QYY16}W. Qiu, F. Yang and Y. Yin, Quasisymmetric geometry of the Julia sets of McMullen maps, to appear in \textit{Sci. China. Math.}, arXiv: math.DS/1308.4324v2, 2016.

\bibitem[Ste06]{Ste06}N. Steinmetz, On the dynamics of the McMullen family $R(z)=z^m+\lambda/z^\ell$, \textit{Conform. Geom. Dyn.} \textbf{10} (2006), 159-183.

\bibitem[Sul85]{Sul85}D. Sullivan, Quasiconformal homeomorphisms and dynamics I: Solution of the Fatou-Julia problem on wandering domains, \textit{Ann. of Math.} \textbf{122} (1985), 401-418.

\bibitem[Tio15]{Tio15}G. Tiozzo, Topological entropy of quadratic polynomials and dimension of sections of the Manderbrot set, \textit{Adv. Math.} \textbf{273} (2015), 651-715.

\bibitem[Why58]{Why58}G. Whyburn, Topological characterization of the Sierpi\'{n}ski curves, \textit{Fund. Math.} \textbf{45} (1958), 320-324.

\bibitem[XQY14]{XQY14}Y. Xiao, W. Qiu and Y. Yin, On the dynamics of generalized McMullen maps, \emph{Ergod. Th. $\&$ Dynam. Sys.} {\bf 34} (2014), 2093-2112.

\bibitem[Yan18]{Yan18}F. Yang, A criterion to generate carpet Julia sets, \textit{Proc. Amer. Math. Soc.} \textbf{146} (2018), no. 5, 2129-2141.

\bibitem[Yin99]{Yin99}Y. Yin, On the Julia sets of semi-hyperbolic rational maps, \textit{Chinese J. Contemp. Math.} \textbf{20} (1999), no. 4, 469-476.

\bibitem[Yin00]{Yin00}Y. Yin,  Geometry and dimension of Julia sets, in \textit{The Mandelbrot Set, Theme and Variations}, London Math. Soc. Lecture Note Ser. \textbf{274}, Cambridge Univ. Press, Cambridge, 2000, pp. 281-287.

\bibitem[Zak03]{Zak03}S. Zakeri, External rays and the real slice of the Mandelbrot set, \textit{Ergod. Th. $\&$ Dynam. Sys.} \textbf{23} (2003), 637-660.

\end{thebibliography}
\end{document}